\documentclass{amsart}
\usepackage{amssymb, amsfonts, lacromay}
\usepackage{mathrsfs,comment}
\usepackage[usenames,dvipsnames]{color}
\usepackage[normalem]{ulem}
\usepackage{url}
\usepackage{amsmath}
\usepackage{amsthm}
\usepackage{amssymb}
\usepackage{graphicx}
\usepackage{stmaryrd}
\usepackage[colorlinks=true, allcolors=blue]{hyperref}
\usepackage{quiver}
\usepackage{cleveref}
\usepackage{mathrsfs}
\usepackage{enumerate}

\usepackage[all,arc,2cell]{xy}
\UseAllTwocells

\usepackage{enumerate}

\usepackage{todonotes}
\usepackage{hyperref}  
\hypersetup{%
  bookmarksnumbered=true,%
  bookmarks=true,%
  colorlinks=true,%1
  linkcolor=blue,%
  citecolor=blue,%
  filecolor=blue,%
  menucolor=blue,%
  pagecolor=blue,%
  urlcolor=blue,%
  pdfnewwindow=trueƒ,%
  pdfstartview=FitBH}   
  
\usepackage[inline]{showlabels}

\def\makeautorefname#1#2{\expandafter\def\csname#1autorefname\endcsname{#2}}
\def\equationautorefname~#1\null{(#1)\null}
\makeautorefname{footnote}{footnote}%
\makeautorefname{item}{item}%
\makeautorefname{figure}{Figure}%
\makeautorefname{table}{Table}%
\makeautorefname{part}{Part}%
\makeautorefname{appendix}{Appendix}%
\makeautorefname{chapter}{Chapter}%
\makeautorefname{section}{Section}%
\makeautorefname{subsection}{Section}%
\makeautorefname{subsubsection}{Section}%
\makeautorefname{theorem}{Theorem}%
\makeautorefname{thm}{Theorem}%
\makeautorefname{sta}{Statement}%
\makeautorefname{cor}{Corollary}%
\makeautorefname{lem}{Lemma}%
\makeautorefname{prop}{Proposition}%
\makeautorefname{pro}{Property}
\makeautorefname{conj}{Conjecture}%
\makeautorefname{defn}{Definition}%
\makeautorefname{notn}{Notation}
\makeautorefname{notns}{Notations}
\makeautorefname{rem}{Remark}%
\makeautorefname{rmk}{Remark}%
\makeautorefname{rems}{Remarks}%
\makeautorefname{quest}{Question}%
\makeautorefname{exmp}{Example}%
\makeautorefname{ax}{Axiom}%
\makeautorefname{claim}{Claim}%
\makeautorefname{assn}{Assumption}%
\makeautorefname{asses}{Assumptions}%
\makeautorefname{con}{Construction}%
\makeautorefname{prob}{Problem}%
\makeautorefname{proj}{Project}%
\makeautorefname{warn}{Warning}%
\makeautorefname{obs}{Observation}%
\makeautorefname{conv}{Convention}%
\makeautorefname{cont}{Context}%

\newtheorem{thm}{Theorem}[section]

\newtheorem{prop}{Proposition}[section]
\newtheorem{lem}{Lemma}[section]

\theoremstyle{definition}
\newtheorem{defn}{Definition}[section]

\newtheorem{rem}{Remark}[section]

\newtheorem{warn}{Warning}[section]
\newtheorem{sch}{Scholium}[section]

\newcounter{assn}[section]
\renewcommand{\theassn}{\Alph{assn}}

\makeatletter
\let\c@cont=\c@thm
\let\c@conv=\c@thm
\let\c@obs=\c@thm
\let\c@sta=\c@thm
\let\c@cor=\c@thm
\let\c@prop=\c@thm
\let\c@lem=\c@thm
\let\c@prob=\c@thm
\let\c@con=\c@thm
\let\c@conj=\c@thm
\let\c@defn=\c@thm
\let\c@notn=\c@thm
\let\c@notns=\c@thm
\let\c@exmp=\c@thm
\let\c@pro=\c@thm
\let\c@ass=\c@thm
\let\c@warn=\c@thm
\let\c@rem=\c@thm
\let\c@sch=\c@thm
\let\c@equation\c@thm
\numberwithin{equation}{section}
\makeatother

\definecolor{orange}{rgb}{1,0.5,0}

\newcommand{\bk}{\mathbf{k}}
\newcommand{\bm}{\mathbf{m}}
\newcommand{\bn}{\mathbf{n}}
\newcommand{\bp}{\mathbf{p}}
\newcommand{\bq}{\mathbf{q}}

\newcommand{\PPs}{\sP\text{-}\mathbf{PsAlg}}
\newcommand{\FP}{\sF\text{-}\mathbf{PsAlg}}
\newcommand{\FPs}{\sF_{vs}\text{-}\mathbf{PsAlg}}
\newcommand{\FPR}{\sF_{\bR}\text{-}\mathbf{PsAlg}}
\newcommand{\SM}{\mathbf{SymMon}}

\subjclass{Primary 55P42, 55P43, 55P91;\\
Secondary 18A25, 18E30, 55P48, 55U35}

\title{What are symmetric monoidal categories?}

\author{Jiasen Liu}
\address{University of Southern California}
\email{jiasen.jason.liu@gmail.com}

\author{J. P. May}
\address{Department of Mathematics, The University of Chicago, Chicago, IL 60637}
\email{may@math.uchicago.edu}

\author{Kyle I Roke}
\address{MIT}
\email{kroke@mit.edu}

\author{Hongyi Zhang}
\address{Haverford}
\email{hzhang3@haverford.edu}

\author{Keming Zhou}
\address{Wellesley}
\email{kz105@wellesley.edu}

\begin{document}

\begin{abstract} 
Symmetric monoidal categories have been understood since the 1960's and are central to many branches of mathematics.  In particular, the construction of spectra from symmetric monoidal categories is at the heart of algebraic $K$-theory.  This construction starts from either categories with an action by a suitable operad $\sP$ or with suitable functors from the category  
$\sF$ of finite sets to the category $\mathbf{Cat}$ of categories.  Infinite loop space theory, which codifies these constructions, led to the invention of $\infty$-categories.  So why the title?  We shall prove that the $2$-category $\SM$ of symmetric monoidal categories is equivalent (in fact very nearly isomorphic) both to a $2$-category $\PPs$ of $\sP$-pseudoalgebras and to an isomorphic $2$-category $\FPR$ of strictly special $\sF$-pseudofunctors.   This equivalence underlies a streamlined equivariant and multiplicative enhancement of infinite loop space theory and should be of independent interest.
\end{abstract}

\maketitle

\tableofcontents

\section*{Introduction} It has been understood since the early 1970's that permutative categories and suitable $\sF$-categories naturally give rise to spectra.  Here $\sF$  is the standard skeleton of the category of finite sets, the opposite of Segal's category $\GA$ \cite{MayPerm, Seg}.  Permutative categories are defined to be symmetric {\em strict} monoidal categories and are equivalent to $\sP$-algebras, where $\sP$ is the permutativity operad (see \autoref{PPs}).   It was also understood in the 1970's that Mac Lane's coherence theorem \cite{Mac}  for symmetric monoidal categories can be recast as the assertion that symmetric monoidal categories can be functorially strictified to equivalent permutative categories.  Although written proofs are recent, due independently to Yau \cite{Yau2} and us \cite{LRZZ}, it was folklore in the 1970's that symmetric monoidal categories are more directly equivalent to $\sP$-pseudoalgebras, as we shall rework below.   What is new is that $\sP$-pseudoalgebras are also directly {\em{ isomorphic}} to strictly special $\sF$-pseudoalgebras, as defined in  Definitions \ref{PIspec} and  \ref{Fps}.

The intuition that this should be true came to the senior author just before Chicago's 2025 REU began. He farmed out the details to his four coauthors, who were participants in that REU.   His motivation is and was that this should be the starting point of a cleaner and more powerful approach to equivariant multiplicative infinite loop space theory than appears in the literature.  The  point is to construct $E_{\infty}$-ring $G$-spectra and related structures from categorical input.   That intuition is correct, but the details are not yet written down.   Since the starting point is of independent interest, we have completed the details of the preliminary drafts \cite{LRZZ, LRZZ2} for presentation here.

Modulo definitions, the two main theorems are easily stated.

\begin{thm}\label{thm1}  The $2$-category $\SM$  is equivalent to the $2$-category $\PPs$.
\end{thm}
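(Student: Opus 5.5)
We construct explicit $2$-functors $\Phi\colon \PPs \to \SM$ and $\Psi\colon \SM \to \PPs$ and show they are mutually inverse up to $2$-natural isomorphism. Indeed, they should be inverse on the nose except for the choice of parenthesization data. Recall that the permutativity operad $\sP$ has $\sP(n)$ the translation category of $\Sigma_n$. Thus $\sP(n)$ is the chaotic (indiscrete) category with object set $\Sigma_n$, and there is a unique morphism between any two objects.

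A $\sP$-pseudoalgebra $\sA$ has action functors $\theta_n\colon \sP(n)\times \sA^n\to \sA$. These are equivariant and unital, but associative only up to specified coherent natural isomorphisms with respect to the operad composition $\ga$.

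\textbf{Construction of $\Phi$.} Given a $\sP$-pseudoalgebra, define the product by $A\otimes B=\theta_2(e;A,B)$ and the unit by $\theta_0(*)$. The structure isomorphisms come from two sources.
\begin{enumerate}[(i)]
\item The associator comes from the pseudoassociativity isomorphisms. Both $(A\otimes B)\otimes C$ and $A\otimes(B\otimes C)$ are isomorphic to $\theta_3(e;A,B,C)$, and the associator is the composite of one isomorphism with the inverse of the other.
\item The symmetry comes from the unique morphism $e\to\tau$ in $\sP(2)$, combined with equivariance: $\theta_2(\tau;A,B)=\theta_2(e;B,A)$.
\end{enumerate}
The unit isomorphisms similarly come from the pseudo-unit data.

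The pentagon and hexagon axioms follow by expressing each side as a composite of pseudoassociativity isomorphisms landing in $\theta_4$ or $\theta_3$. Two facts then make the diagrams commute. First, the pseudoalgebra coherence axioms say that any two composites of pseudoassociativity isomorphisms agree. Second, all morphisms in $\sP(n)$ are unique, so any two paths in $\sP(n)$ are equal.

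On $1$-cells, a pseudomorphism $(F,\phi_n)$, with $\phi_n\colon\theta_n(\si;FA_i)\to F\theta_n(\si;A_i)$, yields a strong symmetric monoidal functor with $\phi_2$ and $\phi_0$ as structure maps. The $2$-cells are the same on both sides.

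\textbf{Construction of $\Psi$.} Given a symmetric monoidal category $\sA$, define
\[
\theta_n(\si;A_1,\dots,A_n)=(\cdots(A_{\si^{-1}(1)}\otimes A_{\si^{-1}(2)})\otimes\cdots)\otimes A_{\si^{-1}(n)},
\]
using the left-normed bracketing and the unit for $n=0$. On morphisms, the unique arrow $\si\to\si'$ acts by the canonical symmetry isomorphism between the two reorderings. Mac Lane's coherence theorem is what makes this well defined and functorial: any two composites of structure isomorphisms between the same formal words, with the same underlying permutation, agree. Equivariance then holds strictly by construction. The pseudoassociativity isomorphisms $\theta_k(\si;\theta_{j_1}(\ldots),\ldots)\cong\theta_j(\ga(\si;\tau_i);\ldots)$ are the canonical rebracketing isomorphisms built from associators and unitors. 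Their coherence again follows from Mac Lane's theorem. On $1$-cells, $\phi_n$ is the iterated monoidal structure map; its coherence uses the fact that strong monoidal functors preserve canonical isomorphisms.

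\textbf{Comparing the composites.} With these conventions, $\Phi\Psi=\mathrm{Id}$ on the nose. We recover $\otimes=\theta_2(e;-,-)$ and the unit, and the recovered associator, unitors, and symmetry are exactly the original ones, since the left-normed rebracketing for $n=3$ is the associator itself.

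For $\Psi\Phi$, starting from a pseudoalgebra $\sA$, we build a pseudomorphism $\Psi\Phi\sA\to\sA$ whose underlying functor is the identity. Its components are the canonical isomorphisms $\theta_n(\si;A_i)\cong(\cdots(A\otimes A)\cdots)$ obtained from iterated pseudoassociativity. Checking that this is a pseudomorphism, and $2$-natural in $\sA$, again reduces to the coherence axioms together with the uniqueness of morphisms in $\sP(n)$.

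\textbf{Main obstacle.} The hard step is proving that $\Psi$ lands in $\PPs$. Specifically, one must show that the rebracketing isomorphisms satisfy the pseudoalgebra coherence axioms, including their compatibility with the action of the unique morphisms of $\sP(n)$. My plan is to first prove a lemma identifying every such composite with the unique "canonical" isomorphism supplied by Mac Lane's symmetric coherence theorem, stated for formal words with the same underlying permutation. Once that lemma is in place, every required diagram commutes because both of its legs are canonical.
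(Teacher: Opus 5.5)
Your proposal is correct and follows essentially the same route as the paper. Your $\Phi$ and $\Psi$ are the paper's $\bU$ and $\bV$: the product comes from $\theta_2$ at $e_2$, the braiding from the unique arrow $e\to\tau$ in $\sP(2)$ plus equivariance, and $\Psi$ uses left-normed iterated products with $\theta(\sigma;\bar a)=\theta(e;\sigma\bar a)$. As in the paper, $\bU\bV=\mathrm{Id}$ on the nose, and $\bV\bU\cong\mathrm{Id}$ via canonical isomorphisms built from iterated pseudoassociativity (the paper uses $\phi(2;k-1,1)$ inductively), with the pasting equalities discharged by Mac Lane coherence.

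Two small sharpenings. First, the unitors come from the nullary-input instances $\phi(2;0,1)$ and $\phi(2;1,0)$ of pseudoassociativity, plus $\phi_1$ if you do not assume normality. Second, ``any two composites of pseudoassociativity isomorphisms agree'' is not literally one of the axioms but a coherence statement. You should either verify the specific Operadic Composition instances you use or cite a coherence result; the paper invokes Lack's operadic form of Mac Lane's theorem.
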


\begin{thm}\label{thm2}  The $2$-category $\PPs$ is equivalent to the $2$-category $\FPs$ and it is isomorphic to the $2$-category $\FPR$. 
\end{thm}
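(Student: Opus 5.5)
The plan is to build a $2$-functor $\Phi\colon \PPs \to \FPR$ directly and show that it is bijective on objects, on $1$-cells, and on $2$-cells. The equivalence with $\FPs$ will then follow by showing that the inclusion $\FPR\subset\FPs$ is an equivalence of $2$-categories.

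First, the construction. A $\sP$-pseudoalgebra is a category $\sA$ with functors $\sP(n)\times\sA^n\to\sA$, where $\sP(n)$ is the translation category of $\Sigma_n$. These satisfy the operad identities only up to coherent invertible transformations. Given such a pseudoalgebra, define the $\sF$-pseudofunctor $X$ by $X(\bn)=\sA^n$. This is the strict form of speciality encoded in $\FPR$: the Segal maps are identities rather than equivalences.

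For a based map $\phi\colon\bm\to\bn$ in $\sF$, define $\phi_*\colon\sA^m\to\sA^n$ in coordinates. The $j$th coordinate is the image of the tuple $(a_i)_{i\in\phi^{-1}(j)}$, taken in the natural order, under the operation indexed by the identity object of $\sP(|\phi^{-1}(j)|)$. Empty fibers go to the unit, the image of $\sP(0)$.

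The pseudofunctoriality constraints $\psi_*\phi_*\cong(\psi\phi)_*$ come from two sources. One is the associativity constraints of the pseudoalgebra. The other is the unique morphisms in $\sP(k)$ that relate the natural ordering of $(\psi\phi)^{-1}(j)$ to the concatenated orderings of the fibers. Since each $\sP(k)$ is chaotic, every such morphism is unique.

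Second, the inverse. Given a strictly special $\sF$-pseudofunctor $X$ with $X(\bn)=X(\bone)^n$, set $\sA=X(\bone)$. Define $\sP(n)\times\sA^n\to\sA$ on an object $\sigma$ as $(\text{fold}\circ\sigma)_*$, where $\sigma$ acts as a permutation of $\bn$. A morphism $\sigma\to\tau$ goes to the composite of the pseudofunctoriality isomorphisms
\[
(\text{fold}\circ\sigma)_*\cong\text{fold}_*\sigma_*=\text{fold}_*\tau_*(\tau^{-1}\sigma)_*\cong(\text{fold}\circ\tau)_*,
\]
using $\text{fold}\circ\tau^{-1}\sigma=\text{fold}$.

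The pseudoalgebra structure isomorphisms come from the pseudofunctor constraints in the same way. The coherence axioms on each side should translate term by term into those on the other, because both sets of axioms just say that all composites of constraint cells agree. The $1$-cells (pseudomorphisms and pseudonatural transformations) and the $2$-cells (modifications) will be handled by the same dictionary. Components at $\bn$ are forced to be $n$-fold products of the component at $\bone$ by strict speciality.

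The main obstacle is showing that the two constructions are strictly inverse, not merely inverse up to equivalence. Going $\sF\to\sP\to\sF$, I must check that $\phi_*$ is recovered exactly. For that I would first factor any $\phi$ as a permutation, followed by a product of folds, followed by an injection. The strict speciality condition (that $X$ respects the product decomposition strictly) then forces $\phi_*$ to be determined by the folds $\text{fold}_*\colon\sA^k\to\sA$ and the unit. The constraints are determined similarly. I will have to fix conventions carefully, for instance which ordering the fibers get, so that both round trips are the identity on the nose.

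Finally, for $\FPR\simeq\FPs$ I will strictify. Given a special $X$, replace it by the $\sF$-pseudofunctor $\bn\mapsto X(\bone)^n$. The structure maps are transported along chosen inverse equivalences of the Segal maps, which yields a strictly special pseudofunctor pseudo-equivalent to $X$. This gives essential surjectivity. Local equivalence on hom-categories should follow because transformations between special objects are determined by their $\bone$-components up to invertible modification.
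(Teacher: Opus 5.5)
Your overall plan matches the paper's. Your $\PPs\to\FPR$ is the paper's $\bR$: your coordinatewise formula is exactly $\PS(\xi)=(\PS(\ph_{m_1})\times\cdots\times\PS(\ph_{m_n}))\com\si_m$. Your inverse is meant to be $\bQ$, and your treatment of $\FPs\simeq\FPR$ is the transport along $\de^{\pm 1}$ of \autoref{FPsFPR}. There the Segal maps are isomorphisms, not merely equivalences, which is what makes the transported object strict on $\PI$; and since $\FPR\subset\FPs$ is full, no separate local argument is needed.

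\textbf{Main gap: your inverse is wrong on objects.} In $\sF$ one has $\ph_n\com\si=\ph_n$ for every $\si\in\SI_n$, as you yourself use. Hence $(\ph_n\com\si)_*=(\ph_n)_*$, and your $\tha(\si,-)$ is the unpermuted product for every $\si$. Write $\phi_{g,f}\colon g_*f_*\Rightarrow(gf)_*$ for the compositors. By associativity coherence and $\phi_{\ta,\ta^{-1}\si}=\id$ on $\PI$, any well-typed reading of your composite for $\si\to\ta$ reduces to $\phi_{\ph_n,\si}\com\phi_{\ph_n,\si}^{-1}=\id$. So your functor $\sP(n)\times\sA^n\rtarr\sA$ discards the symmetry entirely. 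Strict equivariance, \autoref{PPsDefn}(i), would then force $(\ph_n)_*(\rho\be)=(\ph_n)_*(\be)$, which is false already for $a\otimes b$ versus $b\otimes a$. Also the round trip $\PPs\to\FPR\to\PPs$ is not the identity. This is precisely the warning of \autoref{example}.

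The correct definition (\autoref{keyc}) swaps the roles in your chain. Set $\tha(\si,\be)=(\ph_n)_*(\si_*\be)$, the permuted product, and send $u_{\si,\ta}$ to $\phi_{\ph_n,\ta}^{-1}\com\phi_{\ph_n,\si}$, which passes through $(\ph_n\si)_*=(\ph_n)_*=(\ph_n\ta)_*$. Equivariance then follows from strict functoriality on $\PI$ together with compositor coherence.

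\textbf{Second gap: your resolution of the ``main obstacle'' does not work as stated.} Strict speciality only says $\de=\id$, i.e.\ $X|_{\PI}=\bR\sA$. Compositors $X(\de_j)X(\phi)\Rightarrow X(\de_j\phi)$ with $\phi\notin\PI$ need not be identities. So $X(f\vee g)$ agrees with $X(f)\times X(g)$ only up to the isomorphism of \autoref{wedprod}; strict speciality does not force $\phi_*$ to be built on the nose from folds and the unit. Concretely, replace $X(\ph_2\vee\ph_2)$ by a different but isomorphic functor and conjugate the compositors that involve it. This gives a different object of $\FPR$ with the same associated $\sP$-pseudoalgebra.

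What your argument can deliver is $\bQ\bR=\Id$ together with a natural invertible pseudomorphism $\bR\bQ X\rtarr X$ whose underlying functors are identities. That suffices for the equivalence. The paper's own sketch, which appeals to generation ``under equivariance and wedges,'' gives no more than this on this point.

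\textbf{A smaller point: normality.} Your forward construction sends identities of $\PI$ to $\tha_1(e_1,-)^n$. So it lands in $\FPR$ only for normal pseudoalgebras, cf.\ \autoref{norm2}.
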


In \autoref{Sec1}, we describe  all of the $2$-categories named in these theorems.  In Sections \ref{Sec2} and \ref{Sec3}, we describe the comparison functors and sketch the proofs of the theorems. In \autoref{Sec4}, we fill in the diagrams and categorical details needed to substantiate the sketches.   The point is that the intuition is clear but the details are categorical coherence theory that is easy to see but not so easy to write out in detail.   Experts will imagine that the combinatorics are best hidden in terms of a reformulation of results in terms of pseudoalgebras over $2$-monads, but we explain our lack of progress with such a reinterpretation in \autoref{Sec5}.

\section{Intuitive definitions and ideas}\label{Sec1}

\subsection{The $2$-category $\SM$}\label{SMC1}

We shall recall the full definition of the $2$-category $\SM$ of symmetric monoidal categories in \autoref{SMdetails}.  Briefly, a symmetric monoidal category $\sA = (\sA,\otimes, e)$ consists of a category $\sA$, a  product $\otimes\colon \sA\times \sA \rtarr \sA$, and a unit object $e$ such that $\otimes$ is associative, unital, and commutative up to coherent natural isomorphisms.  The diagrams  giving coherence are given in \autoref{SMObj}.  Mac Lane's original coherence theorem is stated in \autoref{SMCoh}.  
                                                                                                                                                                                                                                                                                                                                                                                                                                                                           
The category of morphisms between symmetric monoidal categories $\sA$ and $\sB$  has objects the symmetric monoidal functors $\sA \rtarr \sB$  and morphisms the natural transformations of such functors.   A functor $\bF\colon \sA \rtarr \sB$ is symmetric monoidal if there are unit and product natural isomorphisms that make associativity, unitality, and symmetry diagrams commute, as is made precise in \autoref{SMMap}.  The natural transformations must commute with the unit and product isomorphisms, as is made precise in \autoref{SMMap2}.    Examples appear naturally throughout mathematics.

\subsection{The $2$-category $\PPs$}\label{PPs}
We will not repeat the full definition of an operad from \cite{MayGeo}, \cite{MayOp1}, \cite{GMMO1} or \cite{LRZZ2}.  We will only be concerned with operads in $\mathbf{Cat}$ and with one in particular, namely the permutativity operad $\sP$. There is an operad $\sM$  (alias $\mathbf{Assoc}$) in the category $\mathbf{Set}$ of sets. Its  $j$th set is the symmetric group $\SI_j$.  The right adjoint  to the object functor $\mathbf{Cat} \rtarr \mathbf{Set}$ is the chaotic or indiscrete category functor $\mathcal{E}$.  It takes a set $S$ to the category (a groupoid) with objects $S$ and  exactly one arrow between any pair of elements of $S$.  The operad $\sP$ is $\mathcal{E}\sM$. %It is reduced, both $\sP(0)$ and $\sP(1)$ being copies of the trivial category $\mathbf{1}$.  

Connor and Gurski \cite{CG} introduced the notion of a pseudoalgebra over an operad, and largely the same definition was at the starting point of \cite{GMMO1}, to which this paper can be viewed as a sequel.  Restricting to $\sP$, we will recall the definition in \autoref{PPsDefn}.  Briefly, a $\sP$-pseudoalgebra $\sA$ comes with $\SI_j$-equivariant functors  $\tha_j\colon \sP(j)\times \sA^j \rtarr \sA$  such that the unit and composition diagrams that are required to commute for a $\sP$-algebra $\sA$ now commute only up to equivariant and coherent natural isomorphisms.  The morphisms and $2$-morphisms between pseudoalgebras are required to be compatible with these isomorphisms.  These notions give the $2$-category $\PPs$ of $\sP$-pseudoalgebras.  While the definition may not be familiar, in our case of $\sP$ it just encodes exactly what we see in the familiar notion of the $2$-category of symmetric monoidal categories. 

In fact, it has been known since \cite{MayPerm} that the category of permutative categories is isomorphic to the category of $\sP$-algebras. 
\autoref{thm1} is the folklore generalization to symmetric monoidal categories.  We will define the inverse equivalences 
$\bU$ and $\bV$ in Sections \ref{U} and \ref{V}.  The idea is that $\sP$-pseudoalgebras are ``unbiased'', in that they have a canonically given $n$-fold product $\sA^n \rtarr \sA$ for each $n$.  All variants thereof via permutations, composition, and unit conditions are built into the pseudoalgebra structure.  

Symmetric monoidal categories start with a $2$-fold product and its structural properties, and the idea is that these properties give minimal generators for a $\sP$-pseudoalgebra structure on the same category.  A $\sP$-pseudoalgebra $\sA$ has an underlying symmetric monoidal category $\bU \sA$.  For a symmetric monoidal category $\sA$, a canonical $n$-fold product is obtained by iterating its product via a chosen choice of how to apply its associativity isomorphism.  This elaborates to the $2$-functor $\bV$. As the intuition suggests, $\bU \bV$ is the identity on $\SM$. The fact that $\bV \bU$ is isomorphic to the identity on $\PPs$ is implicit in Mac Lane's coherence theorem for symmetric monoidal categories, as we shall see in \autoref{Sketch1}.

\subsection{The $2$-categories $\FPs$ and $\FPR$}\label{FPs}
We start by recalling some categories of finite sets.  The ones of greatest interest are $\sF$, $\sE$, and $\PI$; $\LA$ and $\LA^{\perp}$ will only appear briefly, in \autoref{Sec5}.

\begin{defn}\label{Fetal}  All categories in this definition have objects the based finite sets $\bn = \{0, 1, \cdots, n\}$, with basepoint $0$, and morphisms certain based functions.  The category $\sF$ has all (based) functions. Its subcategory $\PI$ has those maps 
$\ph\colon \bm \rtarr \bn$ such that $\ph^{-1}(j)$ has $0$ or $1$ element for $1\leq j\leq n$.   The subcategory  $\LA$ of $\PI$ has 
those maps such that $\ph^{-1}(0) = \{0\}$.  Thus $\LA$ is the subcategory of injections and permutations, while $\PI$ adds in projections.  Let $\LA^{\perp}$ be the subcategory of $\PI$ whose maps are those such that $\ph^{-1}(j)$ has $1$ element for $1\leq j\leq n$.  Thus $\LA^{\perp}$ is the subcategory of projections and permutations and $\LA\cap \LA^{\perp} = \coprod_n \LA(\bn,\bn) \iso \SI$, where $\SI$ is the disjoint union of the groups $\SI_n$.  Let $\sE$ denote  the subcategory of $\sF$ whose maps are  those such that $\ph^{-1}(0) = \{0\}$, so that  $\LA = \PI \cap \sE$.\footnote{Maps in $\sE$ are called effective in \cite[Notations 5.2]{MT}.}  We can view all of these categories as discrete $2$-categories, with only identity $2$-cells.
\end{defn}

\begin{defn} For $n\geq 0$, let $\ph_n\in \sE$  be that function $\bn \rtarr \mathbf{1}$ that sends $i$ to $1$ for $1\leq i\leq n$.  We call $\ph_n$ the canonical $n$-fold product.
\end{defn}

\begin{defn} Let $\PI$-$\bf{Cat}$ denote the category of functors $\sB\colon \PI \rtarr \bf{Cat}$ and natural transformations and define $\LA$-$\bf{Cat}$ and
$\LA^{\perp}$-$\bf{Cat}$ similarly.  We often write $\sB(\bf{n})$ as $\sB_n$.  We require our functors $\sB$ to be reduced, meaning that $\sB_0$ is the trivial category $\mathbf{\ast}$ with one object (also denoted $\ast$).  The unique maps $\mathbf{0} \rtarr \bn$ give the $\sB_n$ compatible base objects.  
\end{defn}

\begin{defn}\label{PIspec} For $1\leq i\leq n$, the maps $\de_i\colon \bn \rtarr \mathbf{1}$ in $\PI$ that send $i$ to $1$  and all other $j$ to $0$ induce the components of a natural map $\de\colon \sB_n \rtarr \sB_1^{n}$. We say that $\sB$ is {\em{very special}} if $\de$ is an {\em{isomorphism}} of  categories for all $n\geq 0$.  We say that $\sB$ is {\em{strictly special}} if $\de$ is the identity isomorphism for all $n\geq 0$.
\end{defn}

\begin{rem}  In analogy with Segal's definition for spaces, $\sB$ is said to be {\em{special}} if $\de$ is an {\em{equivalence}} of categories for all $n$. While we shall make relevant remarks, we shall {\em{never}} use that definition in this paper.  Starting from \cite{Arroyo}, it has been used to give an equivalence of suitably defined homotopy categories of symmetric monoidal categories and of special $\sF$-categories.  Incorporating more structure, this lifts to an equivalence of associated $\infty$-categories.  Our \autoref{thm2} is substantially more precise, and its formulation and proof require no elaborate theoretical context.
\end{rem}

The following definition and observation give the maps $\de$ a conceptual meaning.

\begin{defn} Let $\bf{Cat}_*$ be  the category of based categories.  Define a functor $\bL \colon \PI\text{-}\bf{Cat} \rtarr \bf{Cat}_*$ by $\bL \sB = \sB(1)$.
Define a functor $\bR\colon \bf{Cat}_* \rtarr \PI\text{-}\bf{Cat}$ by $(\bR \sA)(n) = \sA^n$, with the evident permutations and projections and with injections that use base objects to insert $\ast$ into powers of $\sA$. Clearly $\bR\bL = \id$.
\end{defn}

\begin{lem}\label{LRAd} The pair $(\bL,\bR)$ is an adjunction. Its counit is the identity and its unit is $\de\colon \Id \rtarr \bR\bL$. 
Therefore the adjunction restricts to an adjoint equivalence between $\bf{Cat}_{*}$ and the category of very special $\PI$-categories and to an adjoint isomorphism between $\bf{Cat}_{*}$ and the category of strictly special $\PI$-categories.
\end{lem}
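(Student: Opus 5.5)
The plan is to establish the adjunction through its unit and counit and then read off the restrictions. First I would check that $\bR$ is a well-defined functor $\bf{Cat}_*\rtarr \PI\text{-}\bf{Cat}$. Every map $\ph\colon\bm\rtarr\bn$ in $\PI$ factors as a projection followed by an injection and a permutation. So $(\bR\sA)(\ph)\colon \sA^m\rtarr\sA^n$ can be defined directly: it sends $(a_1,\dots,a_m)$ to the tuple whose $j$th coordinate is $a_i$ if $\ph^{-1}(j)=\{i\}$, and is the base object $\ast$ if $\ph^{-1}(j)$ is empty. This coordinatewise formula is visibly functorial. It is reduced because $\sA^0=\ast$, and the counit $\bL\bR\sA=\sA^1=\sA$ is the identity, which is the paper's $\bR\bL=\id$ read as $\bL\bR=\id$.

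Next I would verify that $\de\colon\sB\rtarr\bR\bL\sB$, with components $\de=(\sB(\de_1),\dots,\sB(\de_n))\colon\sB_n\rtarr\sB_1^n$, is natural in $\PI$. The key observation is a case split on $\de_j\circ\ph$ for $\ph\colon\bm\rtarr\bn$ in $\PI$. If $\ph^{-1}(j)=\{i\}$, then $\de_j\circ\ph=\de_i$. If $\ph^{-1}(j)=\emptyset$, then $\de_j\circ\ph$ factors through $\mathbf 0$, so $\sB(\de_j\ph)$ is the constant functor at the base object, because $\sB_0=\ast$. Comparing coordinates with the formula for $(\bR\sB_1)(\ph)$ gives the naturality square. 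Naturality of $\de$ in $\sB$ is clear. Here $\bL$ lands in $\bf{Cat}_*$, with basepoint of $\sB_1$ given by $\mathbf 0\rtarr\mathbf 1$, and $\sB(\ph)$ preserves base objects because $\sB_0$ is terminal.

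Then I would check the triangle identities. The first is $\bL\de_\sB=\id$ followed by the identity counit, which holds since $\de$ at level $1$ is $\sB(\id)$. The second is $\bR(\id)\circ\de_{\bR\sA}=\id$, which holds because the $\de_i$ applied to $\sA^n$ are exactly the coordinate projections. This proves the adjunction.

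For the restrictions, very special means the unit is an isomorphism. A full reflective adjunction whose counit is an isomorphism restricts to an adjoint equivalence between $\bf{Cat}_*$ and the full subcategory on objects where the unit is invertible. Every $\bR\sA$ lies in that subcategory, since $\de_{\bR\sA}=\id$, so $\bR\sA$ is in fact strictly special. For strictly special $\sB$, $\de=\id$ forces $\sB_n=\sB_1^n$. By the naturality square with $\de$ equal to the identity, $\sB(\ph)=(\bR\sB_1)(\ph)$ for every $\ph$, so $\bR\bL\sB=\sB$ on the nose. Both composites are then identities, which gives an isomorphism of categories. The main obstacle is only bookkeeping: making sure that the injection maps and the base objects match exactly. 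That is what the empty-fiber case handles, and it relies on reducedness.
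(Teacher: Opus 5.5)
Your proof is correct. The paper states this lemma without proof and treats it as evident; the sentence just before it should read $\bL\bR=\id$ rather than $\bR\bL=\id$, as you noticed. Your argument is exactly the routine verification the paper leaves to the reader: the coordinatewise formula for $\bR\sA$, the case split on whether $\ph^{-1}(j)$ is empty to get naturality of $\de$, the two triangle identities, and the restriction to the full subcategories where the unit is invertible, respectively the identity.
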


Functors $\sF \rtarr \bf{Cat}$ that are very special or strictly special when restricted to $\PI$ are too strict to be of much use. However, there are standard notions of pseudofunctors between $2$-categories and of pseudotransformation  between pseudofunctors (\autoref{PsFun}).   We shall see that
$\sF$-pseudofunctors that restrict on $\PI$ to actual functors that are very special or strictly special are 
especially useful.   

\begin{defn}\label{Fps} An $\sF$-pseudoalgebra is a pseudofunctor $\sF \rtarr  \bf{Cat}$ whose restriction to $\PI$ is a functor.  It is {\em{very special}} (respectively, {\em{strictly special}}) if its restriction to $\PI$ is very special (respectively, strictly special).  An $\sF$-pseudomorphism of $\sF$-pseudoalgebras is a pseudotransformation that restricts on $\PI$ to a (strict) natural transformation. Defining maps between 
$\sF$-pseudomorphisms, which we call $\sF$-$2$-cells (\autoref{2cell2}), we obtain a $2$-category $\FP$ of $\sF$-pseudofunctors, $\sF$-pseudotransformations, and $\sF$-$2$-cells.  Define $\FPs\subset \FP$ to be the full sub $2$-category of very special $\sF$-pseudoalgebras.  Define $\FPR\subset \FPs$ to be the full sub $2$-category of strictly special $\sF$-pseudoalgebras. 
\end{defn}

\begin{rem}\label{example}
The strictness (functor, not just pseudofunctor) on restriction to $\PI$ is essential in this paper and is also vital to the preservation of symmetry in multiplicative infinite loop space theory.  Clearly we do not have such strictness when composing morphisms in $\sF$ with morphisms in $\PI$.  For a centrally important example, it is clear that  $\ph_n\si = \ph_n$ for $\si\in \SI_n\subset \sF_n$. Therefore the product $(\ph_n\si)(a)$ is not equal to the permuted product $\ph_n(\si a)$ for $a\in \sA_n$. 
\end{rem}

\begin{rem}\label{clarity} Dropping any specialness restriction, we define $\sF\text{-}\bf{AlgSt}$ to be the $1$-category of functors $\sF \rtarr \bf{Cat}$ and natural transformations between them and we then define $\sF_{s}\text{-}\bf{AlgSt}$ to be its full subcategory of special functors.  There is a strictification functor 
$\mathbf{St}\colon \FP \rtarr \sF\text{-}\bf{AlgSt}$.  Since $\mathbf{St}\sB$ is equivalent to $\sB$, it is special if $\sB$ is special.  However,
it is usually {\em{not}} very special or strictly special when $\sB$ is so.  One strictifies the pseudofunctor to a functor at the price of losing strictness conditions on $\de$. 
When we apply the classifying space functor $\bB$ to a special $\sF$-category $\sB$, we obtain a special $\sF$-space; that is, the equivalences $\de$ become homotopy equivalences.  Therefore $\bB\sB$ is a special $\sF$-space in the standard sense: the space level maps $B\de$, often called Segal maps,  are homotopy equivalences.   Under $\bB\com \bf{St}$, the higher homotopies move from pseudocategorical input {\em{structure}} to standard homotopical output {\em{properties}}! 
\end{rem}

%\begin{defn}  The category $\FPs$ consists of pseudofunctors $\sF \rtarr \textbf{Cat}$ whose restriction to $\Pi$ are special $\Pi$-algebras, and pseudonatural transformations whose restriction to $\Pi$ are strict natural transformations. \end{defn}

The following two elementary results compare $\FPs$ and $\FPR$.   Their proofs are deferred to \autoref{FDetails}.  

\begin{prop} \label{ImageOfRPSAlg}
Every very special $\sF$-pseudoalgebra is naturally isomorphic to an $\sF$-pseudoalgebra whose restriction to $\PI$ is in the image of $\bR$.
%Therefore $\FPR$ is the sub $2$-category of $\FPs$ given by those $\sF$-pseudoalgebras whose restriction to $\Pi$ is in the image of $\bR$.
\end{prop}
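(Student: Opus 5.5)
Given a very special $\sF$-pseudoalgebra $\sB$, write $\sB|_{\PI}$ for its restriction, which is a very special $\PI$-category. By \autoref{LRAd}, the unit $\de\colon \sB|_{\PI} \rtarr \bR\bL\sB|_{\PI} = \bR(\sB_1)$ is a natural isomorphism of $\PI$-categories, with components $\de_n\colon \sB_n \rtarr \sB_1^n$. The plan is to transport the whole pseudofunctor structure of $\sB$ along these isomorphisms. This gives a new $\sF$-pseudoalgebra $\sB'$ whose restriction to $\PI$ is exactly $\bR(\sB_1)$, together with an isomorphism $\sB \rtarr \sB'$.

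\textbf{Construction of $\sB'$.} Set $\sB'_n = \sB_1^n$. For a morphism $\ph\colon \bm \rtarr \bn$ of $\sF$, define
\[
\sB'(\ph) = \de_n \com \sB(\ph) \com \de_m^{-1}.
\]
Let $\sB(\ps)\com\sB(\ph) \Rightarrow \sB(\ps\ph)$ and $\id \Rightarrow \sB(\id)$ be the pseudofunctor constraint $2$-cells of $\sB$. The constraints of $\sB'$ are obtained by whiskering these on the left by $\de$ and on the right by $\de^{-1}$. The inner factors $\de_m^{-1}\de_m$ cancel strictly, so the associativity and unit coherence axioms for $\sB'$ are the whiskered images of those for $\sB$ and therefore hold. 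Since $\sB|_{\PI}$ is a functor, its constraint cells on composable pairs in $\PI$ are identities, and so the same holds for $\sB'$. By naturality of $\de$ on $\PI$, we get $\sB'(\ph) = \bR(\sB_1)(\ph)$ for $\ph \in \PI$. Hence $\sB'|_{\PI} = \bR(\sB_1)$, and $\sB'$ is in fact strictly special.

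\textbf{The isomorphism.} Define $\ze\colon \sB \rtarr \sB'$ with components $\ze_n = \de_n$, taking all pseudonaturality $2$-cells to be identities. This works because $\sB'(\ph)\de_m = \de_n\sB(\ph)$ holds strictly for every $\ph \in \sF$, by construction. The compatibility of $\ze$ with the composition and unit constraints holds because those constraints of $\sB'$ were defined as the conjugates of the ones for $\sB$. On $\PI$, $\ze$ is a strict natural transformation, so it is an $\sF$-pseudomorphism. The inverse $\ze^{-1}$, with components $\de_n^{-1}$, is likewise an $\sF$-pseudomorphism, and the two composites are identities. So $\ze$ is an isomorphism in $\FPs$.

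\textbf{Naturality.} It remains to show the construction is natural in $\sB$. Given an $\sF$-pseudomorphism $f\colon \sB \rtarr \sC$, define $f'_n = \de_n f_n \de_n^{-1}$, and conjugate its pseudonaturality cells in the same way. On $\PI$, $f$ is strict, so $f'_n = f_1^n$ by naturality of $\de$. Treat $\sF$-$2$-cells the same way. This makes $\sB \mapsto \sB'$ a $2$-functor, and $\ze$ is natural with respect to it.

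The main obstacle is bookkeeping rather than conceptual: one must check that conjugating by $\de$ respects every coherence axiom for pseudofunctors, pseudotransformations, and $\sF$-$2$-cells as given in \autoref{PsFun} and \autoref{2cell2}. I expect this to reduce uniformly to the fact that conjugation by isomorphisms preserves all pasting equalities, since each $\de_n$ is a strict isomorphism of categories.
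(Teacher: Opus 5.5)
Your proposal is correct and is essentially the paper's proof. Like the paper, you transport the pseudofunctor structure along the isomorphism $\delta$ by setting $\sB'(\varphi)=\delta_n\sB(\varphi)\delta_m^{-1}$ and whiskering the constraint cells. Naturality of $\delta$ on $\PI$ then forces $\sB'|_{\PI}=\bR(\sB_1)$, and $\delta$ itself is a strict, hence pseudo, isomorphism. Your naturality paragraph, which conjugates $1$-cells and $2$-cells by $\delta$, matches what the paper does separately in the proof of \autoref{FPsFPR} to obtain the $2$-functor $\bR\bL\colon \FPs\rtarr\FPR$.
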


\begin{prop}\label{FPsFPR}
$\FPs$ is equivalent as a 2-category to $\FPR$. 
\end{prop}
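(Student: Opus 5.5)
The plan is to show that the inclusion $\FPR\subset\FPs$, which is full by definition, is essentially surjective on objects in the $2$-categorical sense. For a $2$-functor $J$ that is the identity on hom-categories, such as a full sub $2$-category inclusion, essential surjectivity up to equivalence is all that remains. In fact we get more: every object of $\FPs$ is isomorphic, via an invertible $\sF$-pseudomorphism, to an object of $\FPR$. A quasi-inverse $2$-functor $\mathbf{S}\colon \FPs\rtarr\FPR$ can then be built by transport of structure along these chosen isomorphisms.

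\textbf{Step 1: objects.} Start from \autoref{ImageOfRPSAlg}. Given a very special $\sB$, it produces an $\sF$-pseudoalgebra $\sB'$ with $\sB'|_{\PI}=\bR\sB_1$, together with an isomorphism $\io_{\sB}\colon\sB\rtarr\sB'$. Concretely, I would set $\sB'_n=\sB_1^n$ and take $\io_n=\de\colon\sB_n\rtarr\sB_1^n$. On a map $\ph\in\sF$, define $\sB'(\ph)=\de\com\sB(\ph)\com\de^{-1}$, and conjugate the pseudofunctor constraint $2$-cells by $\de$. Since $\bR\sB_1$ has $\de$ equal to the identity (\autoref{LRAd}), $\sB'$ is strictly special. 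Because $\de$ is natural on $\PI$ by \autoref{LRAd}, the restriction of $\sB'$ to $\PI$ is exactly the functor $\bR\sB_1$. Moreover, $\io=\de$ is a pseudotransformation with identity $2$-cells that is strictly natural on $\PI$, so it is an $\sF$-pseudomorphism. Its inverse $\de^{-1}$ is one as well. If $\sB$ is already strictly special, then $\de=\id$, and we choose $\sB'=\sB$ and $\io=\id$.

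\textbf{Step 2: morphisms and $2$-cells.} For an $\sF$-pseudomorphism $F\colon\sB\rtarr\sC$, define $\mathbf{S}F=\io_{\sC}\com F\com\io_{\sB}^{-1}$, with components $\de F_n\de^{-1}$ and conjugated pseudonaturality cells. For an $\sF$-$2$-cell $\al$, whiskering gives $\mathbf{S}\al$. Because $\io$ and $\io^{-1}$ carry identity structure cells, this conjugation preserves composition and identities strictly. Hence $\mathbf{S}$ is a strict $2$-functor, and $\io$ becomes a $2$-natural isomorphism $\Id\rtarr J\mathbf{S}$. In the other direction, $\mathbf{S}J=\Id$ by the choice made in Step 1 for strictly special objects. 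It follows that $J$ and $\mathbf{S}$ are inverse equivalences, indeed an adjoint equivalence.

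\textbf{Main obstacle.} The step needing the most care is checking that conjugation by $\de$ really produces objects and maps of the required kind. This means verifying three things: the pseudofunctor coherence axioms for $\sB'$, the axioms that make $\io$ an $\sF$-pseudomorphism, and the fact that $\mathbf{S}$ preserves the $\sF$-$2$-cell conditions of \autoref{2cell2}. The plan is to observe that every axiom is an equation between pasted $2$-cells. Conjugating by isomorphisms $\de$, which are strictly compatible with all $\PI$-maps, carries each such equation to the corresponding one. The only point needing attention is that composites mixing $\sF$-maps with $\PI$-maps, as in \autoref{example}, remain strict on the $\PI$ part. This holds exactly because $\de$ is natural for $\PI$.
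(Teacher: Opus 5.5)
Your proposal is correct and follows essentially the same route as the paper. The paper also conjugates objects, $\sF$-pseudomorphisms, and $\sF$-$2$-cells by $\de$ to get a $2$-functor $\bR\bL\colon \FPs\rtarr\FPR$ that is the identity on $\FPR$, with $\de$ giving an isomorphism from the identity of $\FPs$ to $\bI\bR\bL$. Your remark that $\de$ carries identity pseudonaturality cells, so that conjugation is strictly functorial and the comparison is $2$-natural, is exactly the whiskering argument the paper sketches.
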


We briefly describe $\sE$ and $\sF$ combinatorially and then give a result that explains why we prefer to work in $\FPR$ rather than in $\FPs$. 
 
Intuitively, the morphisms of $\sF$ are composites of projections, maps in $\PI$, and wedges of canonical multiplication maps $\phi_n \colon \textbf{n} \rtarr \textbf{1}$. To make this  precise, we must first be precise about wedges.  We identify $\bm\vee\bn$ with $\mathbf{m+n}$, using the first $m$ and last $n$ integers in $\{1, 2, \cdots, m+n\}$.

\begin{defn}
 Given two maps $f \colon \textbf{m} \rtarr \textbf{n}$ and $g \colon \textbf{m}' \rtarr \textbf{n}'$ in $\sF$, we define their wedge $f \vee g \in \sF(\textbf{m+m}', \textbf{n+n}')$ to be the map that restricts to $\io_n f$ on the first $m$ (positive) values  and restricts to $\io_{n}'g$ on the last $m'$ values, where 
 %    {\textbf m} & {\textbf n} & {\textbf {n+n}'}
%    \arrow["f", from=1-1, to=1-2]
%    \end{tikzcd}\]
%    \[\begin{tikzcd}
%    {\textbf m'} & {\textbf n} & {\textbf {n+n}'}
%    \arrow["g", from=1-1, to=1-2]
%    \arrow["{in_{n'}}", from=1-2, to=1-3]
%    \end{tikzcd}\]
$\io_n $ and $\io_{n}'$ are the inclusions of the first $n$ and last $n'$ (positive) values in $\textbf{n+n}'$.
\end{defn}

\begin{defn} Let $\sA$ be a based category.  We define the product  \linebreak
$f\times g \colon \sA^{m+m'}\rtarr \sA^{n+n'}$ 
of functors $f\colon \sA^m\rtarr \sA^n$ and $g\colon \sA^{m'}\rtarr \sA^{n'}$ to be the cartesian product of the functors $f\pi_m$ and $g\pi_{m'}$, 
%    \[\begin{tikzcd}
%
%    {\sA^{m+m'}} & {\sA^m} & {\sA^n}
%    \arrow["{proj_m}", from=1-1, to=1-2]
%    \arrow["f", from=1-2, to=1-3]
%    \end{tikzcd}\]
%    and
%    \[\begin{tikzcd}
%    {\sA^{m+m'}} & {\sA^{m'}} & {\sA^{n'}}
%    \arrow["g", from=1-2, to=1-3]
%    \end{tikzcd}\]
where $\pi_m$ and $\pi_{m'}$ are the projections from  $\sA^{m+m'}$ to its first $m$ and  last $m'$ coordinates.
\end{defn}

Clearly these operations are both associative.

\begin{prop}\label{decompose}
 Every map in $\sE$ is a composite of a permutation and a wedge of canonical product maps.  Every map in $\sF$ is a composite of a projection and a map in $\sE$. 
\end{prop}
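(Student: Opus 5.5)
We prove both assertions by explicit combinatorial factorization, using only the definitions of $\sE$, $\PI$, wedges and the maps $\ph_n$.

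\emph{Maps in $\sE$.} Let $\ph\colon \bm\rtarr\bn$ be in $\sE$, so $\ph^{-1}(0)=\{0\}$. For $1\leq j\leq n$ set $k_j=|\ph^{-1}(j)|$, so that $k_j\geq 0$ and $\sum_j k_j = m$. Let $\mathbf{k}_j$ denote the corresponding based set. We use the wedge
\[ \ph_{k_1}\vee\cdots\vee\ph_{k_n}\colon \mathbf{k}_1\vee\cdots\vee\mathbf{k}_n = \bm \rtarr \mathbf{1}\vee\cdots\vee\mathbf{1}=\bn, \]
which sends the $j$th consecutive block of $k_j$ positive elements to $j$. By the stated associativity of wedges, this is well defined without choosing a bracketing. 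Choose a permutation $\si\in\SI_m$ carrying $\ph^{-1}(j)$ bijectively onto the $j$th block, for instance preserving order within each fiber. Then $\ph = (\ph_{k_1}\vee\cdots\vee\ph_{k_n})\com\si$, which we check pointwise.

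The edge cases need care. When $k_j=0$, the factor $\ph_0\colon \mathbf{0}\rtarr\mathbf{1}$ is the inclusion of the basepoint, so it contributes an element of $\bn$ that is not in the image, as required. When $m=0$, the wedge consists entirely of $\ph_0$'s. When $n=0$, we have $m=0$ and the empty wedge is the identity of $\mathbf{0}$. The only real obstacle is this bookkeeping: the identification $\bm\vee\bn=\mathbf{m+n}$ must be applied consistently so that the blocks line up in increasing order.

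\emph{Maps in $\sF$.} Let $f\colon\bm\rtarr\bn$ be any based function. Let $S=\{i\geq 1 : f(i)\neq 0\}$ and $p=|S|$. Define a projection $\pi\colon\bm\rtarr\mathbf{p}$ that sends the elements of $S$, in order, bijectively to $1,\dots,p$ and sends everything else to $0$. Each fiber $\pi^{-1}(j)$ with $j\geq1$ has exactly one element, so $\pi\in\PI$; in fact it lies in $\LA^{\perp}$ and is a projection in the intended sense. Define $g\colon\mathbf{p}\rtarr\bn$ by $g(\pi(i))=f(i)$ for $i\in S$. Then $g^{-1}(0)=\{0\}$, so $g\in\sE$, and $f=g\com\pi$ holds pointwise: both sides send $i\in S$ to $f(i)$ and all other points to $0$.

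Combining the two parts, every map of $\sF$ has the form $(\ph_{k_1}\vee\cdots\vee\ph_{k_n})\com\si\com\pi$, with $\si$ a permutation and $\pi$ a projection.
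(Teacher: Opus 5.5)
Your proof is correct and is essentially the paper's argument. You use the same fiber-sorting permutation, only its inverse: you write $\ph = (\ph_{k_1}\vee\cdots\vee\ph_{k_n})\com\si$ where the paper writes $\xi\si = \ph_{m_1}\vee\cdots\vee\ph_{m_n}$. You also use the same projection collapsing exactly $f^{-1}(0)$ to the basepoint, followed by a map in $\sE$, and your explicit handling of the cases $k_j=0$, $m=0$, $n=0$ is just a more careful rendering of the same bookkeeping.
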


\begin{proof} Let $\xi\in\sE(\bm, \bn)$ and set $m_j=|\xi^{-1}(j)|$ for $1\leq j\leq n$.  Some $m_j$ may be $0$, but since we are in $\sE$,  $\sum_{1\leq j\leq n} m_j = m$.   The sets $\xi^{-1}(j)$ are ordered as subsets of $\bm$, and we define $\si\colon \bm\rtarr \bm$ to be the permutation that maps the ordered set $\{1, \cdots, m\}$ to its reordering as the block sum of the ordered sets $\xi^{-1}(j)$ for those $j$ such that $m_j> 0$. (There is no $j$th block when $m_j=0$.)    Then $\xi\si = \ph_{m_1}\vee \cdots \vee \ph_{m_n}$.  

For the second statement, any  $\ps \in \sF(\bm,\bn)$ is the composite of the projection $\pi\colon \bm\rtarr \bk$ that sends $\ps^{-1}(0)$ (and no other elements) to $0$ and a map  $\xi\in\sE$.
\end{proof}
%\begin{proof}
%Let $f \in \sF(\textbf{m},\textbf{n})$. Consider the set $f^{-1}(0)$, and take the projection $\pi \colon \textbf{m}\rtarr\textbf{k}$ sending exactly $f^{-1}(0)$ to 0. It is clear that we have a triangle 
%    \[\begin{tikzcd}
%    {\textbf m} & {\textbf k} \\
%    \arrow["\pi", from=1-1, to=1-2]
%    \arrow["f"', from=1-1, to=2-2]
%    \arrow["{f'}", from=1-2, to=2-2]
%    \end{tikzcd}\]
%    for some $f' \colon \textbf{k} \rtarr \textbf{n}$ with $f'^{-1}(0)=\{0\}$. Next, we consider the sets $f'^{-1}(j)$ for $j>0$. There is some permutation $\sigma \colon \textbf{k} \rtarr \textbf{k}$ which reorders $\textbf{k}$ as $f'^{-1}(1)\sqcup f'^{-1}(2)\sqcup \cdots \sqcup f'^{-1}(n)$. Then for $f_j = |f^{-1}(j)|=|f'^{-1}(j)|$ for $j>0$, we claim that $(\phi_{f_1} \oplus \phi_{f_2}\oplus\cdots\oplus \phi_{f_n})\circ \sigma = f'$. But this is automatic, since if $i \in f'^{-1}(j)$, then $f_1+\cdots+f_{j-1}<\sigma(i)\leq f_1+\cdots+f_{j-1}+f_j$, so the direct sum will map $\sigma(i)$ to $j$. Diagrammatically:
%    \[\begin{tikzcd}
 %   {\textbf m} & {\textbf k} & {\textbf k} \\
%    & {\textbf n}
%    \arrow["f"', from=1-1, to=2-2]
%    \arrow["\sigma", from=1-2, to=1-3]
%    \arrow["{f'}", from=1-2, to=2-2]
%    \arrow["{\phi_{f_1}\oplus \cdots\oplus \phi_{f_n}}", from=1-3, to=2-2]
%    \end{tikzcd}\]
%.  \end{proof}

The following observation gives one reason why we prefer to work in $\FPR$.

\begin{prop}\label{wedprod} Let $\sB\in \FPR$, so that $\sB$ restricts to $\bR\sA$ on $\PI$, where $\sA = \sB(1)$.  Then, up to natural isomorphism,
 $\sB$ takes wedges to products.
\end{prop}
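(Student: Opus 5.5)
We need to show: for $f\colon \bm\rtarr\bn$ and $g\colon \bm'\rtarr\bn'$ in $\sF$, the functor $\sB(f\vee g)\colon \sA^{m+m'}\rtarr \sA^{n+n'}$ is naturally isomorphic to $\sB(f)\times\sB(g)$, with the isomorphisms supplied by the pseudofunctor structure of $\sB$. The plan is to compare coordinates, using that $\sB$ is strictly special, so that $\sB(\bk)=\sA^k$ with the maps $\de_i$ acting as the coordinate projections.

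First, because $\sB$ is strictly special, a functor into $\sB(\mathbf{n+n'})=\sA^{n+n'}$ is determined by its composites with the projections $\sB(\de_j)$, $1\leq j\leq n+n'$. Likewise, a natural transformation between two such functors is determined by its components after applying each $\sB(\de_j)$. So it suffices to produce, for each $j$, a natural isomorphism
\[
\sB(\de_j)\sB(f\vee g)\iso \sB(\de_j)\,(\sB(f)\times\sB(g)).
\]

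Take $j\leq n$; the case $j>n$ is symmetric. The pseudofunctor structure gives a coherent isomorphism $\sB(\de_j)\sB(f\vee g)\iso \sB(\de_j\com(f\vee g))$. In $\sF$ we have the strict identity $\de_j\com(f\vee g)=\de_j\com f\com \pi$, where $\pi\colon \mathbf{m+m'}\rtarr\bm$ is the projection in $\PI$ onto the first $m$ coordinates. Applying the pseudofunctor again gives
\[
\sB(\de_j f\pi)\iso \sB(\de_j)\sB(f)\sB(\pi).
\]
Since $\sB$ restricts to $\bR\sA$ on $\PI$, $\sB(\pi)$ is the actual projection $\pi_m\colon \sA^{m+m'}\rtarr\sA^m$. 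Also, $\sB(\de_j)\sB(f)\pi_m$ is exactly the $j$th coordinate of $\sB(f)\times\sB(g)$. Composing these isomorphisms gives the $j$th component.

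Assembling over $j$ yields a natural isomorphism $\sB(f\vee g)\iso\sB(f)\times\sB(g)$, natural in the objects of $\sA^{m+m'}$. Two cases are worth noting:
\begin{itemize}
\item When $f$ and $g$ lie in $\PI$, all the coherence isomorphisms involved are identities, because $\sB$ is a strict functor on $\PI$. So there the isomorphism is an equality.
\item When $f$ or $g$ is a canonical product, $\sB(\ph_k)$ is just the given $k$-fold product functor, and combining with \autoref{decompose} this describes $\sB$ on all of $\sF$ up to isomorphism.
\end{itemize}

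The main obstacle is the phrase ``up to natural isomorphism'' if one also wants these isomorphisms to be coherent. That would mean compatible with the associativity of wedges and products, and with composition in $\sF$, via $(f\vee g)(f'\vee g')=ff'\vee gg'$. Checking this reduces to the associativity and unit coherence axioms of the pseudofunctor $\sB$ applied to the composites $\de_j\com(\cdots)$. I would handle it by always reducing to the canonical comparison isomorphism $\sB(\de_j)\sB(h)\iso\sB(\de_j h)$ and invoking pseudofunctor coherence. For the statement as given, only the existence of the natural isomorphism is needed, and that is the coordinatewise construction above.
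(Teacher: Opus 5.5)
Your argument is correct and is essentially the paper's. The paper projects onto the two blocks via $\pi_n$ and $\pi_{n'}$, uses the strict identity $\pi_n(f\vee g)=f\pi_m$ in $\sF$ together with the pseudofunctor comparison isomorphisms, and takes the product of the two resulting isomorphisms; you do the same thing one coordinate at a time with the $\de_j$, and you correctly put the comparison isomorphism at $\sB(\de_j)\sB(f\vee g)\iso\sB(\de_j(f\vee g))$ and the strict equality at $\de_j(f\vee g)=\de_j f\pi$, which the paper's displayed chain writes the other way round.
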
 
\begin{proof} For maps $f\colon \bm\rtarr \bn$ and $g \colon \bm' \rtarr \bn'$ in $\sF$ we must  show that $\sB(f\vee g)$ and $\sB(f)\times \sB(g)$ are naturally isomorphic functors $\sA^{m+m'} \rtarr \sA^{n+n'}$.  As a functor on $\PI$, $\sB$ takes the projection $\pi_n\colon \mathbf{n+n}' \rtarr \bn$ in $\PI$ to the projection $\pi_n\colon \sA^{n+n'}\rtarr \sA^{n}$ in $\mathbf{Cat}_*$ and similarly for $n'$.  Therefore, by the definition of $\vee$ and pseudofunctoriality, we have
$$ \pi_n(\sB(f) \times \sB(g)) = \sB(f) \pi_m = \sB(f) \sB(\pi_m) \iso \sB(f\pi_m)$$
and
$$ \pi_n\sB(f\vee g) = \sB(\pi_n)\sB(f\vee g) = \sB(\pi_n(f \vee g)) \iso \sB(f\pi_m).$$
Similarly,
$$\pi_{n'}(\sB(f)\times \sB(g)) \iso \sB(g\pi_{m'})\iso \pi_{n'}\sB(f\vee g).$$
 The product of these natural isomorphisms is the desired natural isomorphism 
 $$\sB(f) \times \sB(g) \iso \sB(f\vee g).$$
 \end{proof}

%    Write $\sC(1)=\sD$. Suppose we have two $\sF$-morphisms $f \colon \textbf{m} \rtarr \textbf{n}$ and $g \colon \textbf{m}' \rtarr \textbf{n}'$. These correspond to functors $\sC(f)\colon \sD^m\rtarr\sD^n$ and $\sC(g)\colon \sD^{m'}\rtarr\sD^{n'}$. We would like to show that the functors $\sC(f)\oplus \sC(g) \colon \sD^{m+m'}\rtarr\sD^{n+n'}$ and $\sC(f\oplus g) \colon \sD^{m+m'}\rtarr\sD^{n+n'}$ are naturally isomorphic. Since $\sC \in \FPR$, $\sC$ maps projections in $\sF$ to the analogous projections in $\textbf{Cat}_*$. So for $proj_n \colon \textbf{n+n}'\rtarr\textbf{n}$ the projection to the first $n$ coordinates and $proj_{n'} \colon \textbf{n+n}'\rtarr\textbf{n}'$ the projection to the last $n'$ coordinates, we have $\sC(proj_{n})=proj_n$ and $\sC(proj_{n'})=proj_{n'}$. So we can postcompose with these:
%    \begin{align*}
%        proj_n\circ(\sC(f)\oplus\sC(g)) &= \sC(f)\circ proj_m \\
%        &= \sC(f)\circ \sC(proj_m) \\
%        proj_n\circ\sC(f\oplus g) &= \sC(proj_n)\circ \sC(f\oplus g)\\
%        &\simeq \sC(proj_n \circ (f\oplus g)) \\
%        &= \sC(f \circ proj_m)
%    \end{align*}
%    Similarly:
 %   \[ proj_{n'}\circ(\sC(f)\oplus\sC(g)) \simeq \sC(g \circ proj_{m'}) \simeq proj_{n'}\circ\sC(f\oplus g) \]
 %   Since we have these natural isomorphisms, we can take their product to get a natural isomorphism 
 %   \[ \sC(f)\oplus\sC(g) \simeq \sC(f\oplus g) \]
%   \end{proof}

\section{Sketch proof of \autoref{thm2}}\label{Sec2}
\subsection{The $2$-functor $\bQ\colon \FPR \rtarr \PPs$}\label{SecQ}
%Since we are now thinking $2$-categorically, we change our point of view on the operad $\sP$.

%\begin{rem} A monoid can be considered as a category in two different ways.  We can consider its elements as objects and its product as composition, which was our implicit point of view when considering $\sM$ in \autoref{PPs}.  Alternatively, we can think of it as a category with a single object, with its elements being morphisms from that one object to itself.   We now think of $\sP$ $2$-categorically, so that each $\sP(n)$ has a single object $\bn$, with $\sI(\SI_n)$ as the category of morphisms from $\bn$ to $\bn$.   Thus we think of $\bn$ as a $0$-cell, the elements of $\SI_n$ as $1$-cells, and the (unique) morphisms between pairs of $1$-cells as $2$-cells. \end{rem}

Let $\sB\in \FPR$, so that the $\sF$-pseudoalgebra $\sB$ restricts to the $\PI$-category  $\bR \sA \colon \PI \rtarr \mathbf{Cat}$, where $\sA = \sB(1)$.   We have action functors 
\begin{equation}\label{Psi}
\Psi\colon \sF(\bm,\bn)\times \sA^m\rtarr \sA^n.
\end{equation} 
These give ``vertical" functoriality.   Pseudofunctoriality (\autoref{PsFun}) means intuitively that ``horizontal" functoriality is not strict.  Instead, we have suitably equivariant and coherent natural isomorphisms $\ph_1$ and $\ph$ in the diagrams

\begin{equation}\label{FPRdiag}
\begin{tikzcd}
{\sA} & {\sF(\mathbf{1},\mathbf{1})\times \sA} & 
    {\sF(\textbf n, \textbf k)\times \sF(\textbf m, \textbf n) \times \sA^m} & {\sF(\textbf n, \textbf k) \times \sA^n} \\
  & \sA&
    {\sF(\textbf m,  \textbf k) \times \sA^m} & \sA^k.
    \arrow["{\io}", from=1-1, to=1-2]
   \arrow[""{name=0, anchor=center, inner sep=0}, "\id"', from=1-1, to=2-2]
    \arrow["{\phi_1}"{pos=0.4}, shift right, shorten <=3pt, shorten >=3pt, Rightarrow, from=1-2, to=0]
    \arrow["{\PS}", from=1-2, to =2-2]
    \arrow[""{name=0, anchor=center, inner sep=0}, "{\Id \times\Psi}", from=1-3, to=1-4]
    \arrow["{\com\times \Id}"', from=1-3, to=2-3]
    \arrow["\Psi", from=1-4, to=2-4]
    \arrow[""{name=1, anchor=center, inner sep=0}, "\Psi"', from=2-3, to=2-4]
    \arrow["\phi", shorten <=9pt, shorten >=9pt,  Rightarrow, from=1-4, to=2-3]
    \end{tikzcd}
\end{equation}
Here $\io$ is the evident identification; \autoref{norm1} gives comments on the first diagram.

\begin{rem}\label{keyc} The central case is when $m=n$ and $k=1$, restricted from $\sF$ to $\sE$.  The domain then contains
$\sE(\bn,\mathbf{1})\times \LA(\bn,\bn)\times \sA^n$.  This case generates the others under wedges, and it focuses on the example in  \autoref{example}.  For $\si\in \SI_n$ and $\be$ an object or morphism of $\sA^n$,  $\ph$ gives an isomorphism from the permuted canonical product $\PS(\ph_n, \si \be)$ to the canonical product $\PS(\ph_n\si, \be) = \PS(\ph_n,\be)$. 
\end{rem}

On objects, we define $\bQ \sB = \sA$.   We must define action functors 
$$\tha = \tha_n \colon \sP(n) \times \sA^n \rtarr \sA.$$  
Notice that $\ph_n$ is the unique element of  $\sE(\bn,\mathbf{1})$, whereas $\sP(n) = \sE\SI_n$ has object set $\SI_n$, with the identity element $e_n$ as a privileged object.  Identifying $\ph_n$ and $e_n$, this gives an 
identification  of the object set $\SI_n$ of $\sP(n)$ with the set 
$$\{\ph_n\} \times \SI_n =\sE(\bn,\mathbf{1})\times \LA(\bn,\bn) \subset \sF(\bn,\mathbf{1})\times \sF(\bn,\bn). $$
We define $\tha$ on objects by restricting the composite
$$\xymatrix@1{ \sF(\bn,\mathbf{1}) \times \sF(\bn,\bn) \times \sA^n \ar[r]^-{\Id\times\PS} & \sF(\bn,\mathbf{1})\times \sA^n \ar[r]^-{\PS} &  \sA\\}$$
to $\sE(\bn,\mathbf{1})\times \LA(\bn,\bn)\times \sA^n$.  Here the first action map $\PS$ is the (left) action of $\SI_n$ on $\sA^n$.  With $\ph_n$ identified with $e_n$,  the second action map $\PS$ specifies $\tha$ on $e_n \times \be$, where $\be$ is an object or morphism of $\sA^n$.  We must define $\tha$ on the morphisms of $\sP(n)$.  Writing $u_{\si,\ta}$ for the unique morphism 
$\si\to \ta$ in $\sP(n)$, the (right) action of $\SI_n$ on $\sP(n)$ sends the pair $(u_{\rh,\si}, \ta)$ to $u_{\rh \ta, \si\ta}$.  The action restricts to an isomorphism
$$ \sP(n)(\SI_n,e_n)\times \SI_n  \rtarr  \mathbf{Mor}\sP(n);$$
its  inverse sends the unique morphism $u_{\ta,\si}$ to the pair $(u_{\ta\si^{-1},e_n}, \si)$. Under this isomorphism, composition of morphisms in $\sP(n)$ corresponds to the composition on $\sP(n)(\SI_n,e_n)\times \SI_n$ that sends  
$$ ((u_{\nu,e_n}, \ta), (u_{\mu,e_n},\si))  \ \ \text{to} \ \ (u_{\mu\nu,e_n}, \ta).$$
We define $\tha(\id_{e_n}, \be) = \PS(\ph_n, \be)$ for a morphism $\be\in \sA^n$ thus identifying the canonical $n$-fold products in $\sA$ and $\bQ\sA$.   Writing $\tilde{\si} = u_{\si,e_n}$, we define 
\begin{equation}\label{keyperm}
\tha(\tilde{\si},e_n, \be) = \PS(\ph_n,\si\be)
\end{equation}
as described in \autoref{keyc}.  The equivariance formula \autoref{PPsDefn}(i) requires
$\tha(\al\si,\be) = \tha(\al,\si\be)$ for a morphism $\al\in\sP(n)$, a permutation $\si\in \SI_n$ and a morphism $\be\in \sA^n$, thus giving
\begin{equation}\label{keyperm2}
\tha(\tilde{\si},\ta, \be) = \tha(\tilde{\si},e_n, \ta\be) = \PS(\ph_n,\si\ta\be)
\end{equation}
Extending $\phi$ in \autoref{keyc} by equivariance as in \autoref{PPsDefn}, we obtain the following result.
Here proj is short for projection. 
\begin{lem}\label{cando} There is a natural isomorphism $\ph_n$ in the diagram
\[\begin{tikzcd}
{\sP(n)\times \sA^n} &  \sA \\
{\sE(\bn,\mathbf{1}) \times \sA^n}  & 
\arrow["{\tha}", from=1-1, to =1-2]
\arrow["\textnormal{proj}"', from=1-1, to =2-1]
\arrow[""{name=0, anchor=center, inner sep=0}, "\PS"', from=2-1, to=1-2]
\arrow["{\phi_n}"{pos=0.4}, shorten <=3pt, shorten >=3pt, Rightarrow, from=1-1, to=0]
\end{tikzcd}
\]
\end{lem}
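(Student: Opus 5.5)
We must produce a natural isomorphism $\ph_n\colon \tha \Rightarrow \PS\com(\mathrm{proj})$ of functors $\sP(n)\times\sA^n\rtarr\sA$; here proj sends $(\si,\be)$ to $(\ph_n,\be)$, since $\sE(\bn,\mathbf{1})=\{\ph_n\}$. On objects, $\tha(\si,\be)=\PS(\ph_n,\si\be)$ and $\PS(\ph_n\si,\be)=\PS(\ph_n,\be)$. So the natural choice for the component at $(\si,\be)$ is the pseudofunctoriality isomorphism of \autoref{FPRdiag}, specialized as in \autoref{keyc} with $k=1$ and $m=n$:
$$(\ph_n)_{(\si,\be)} = \ph_{\ph_n,\si,\be}\colon \PS(\ph_n,\PS(\si,\be))=\PS(\ph_n,\si\be)\rtarr \PS(\ph_n\si,\be)=\PS(\ph_n,\be).$$
Here we use that $\sB$ restricts to $\bR\sA$ on $\PI$, so $\PS(\si,-)$ is literally the permutation action on $\sA^n$. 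For $\si=e_n$, the component should be the identity. This follows from the normalization of pseudofunctors (\autoref{norm1}), or from the unit isomorphism $\ph_1$ together with the coherence relating $\ph$ and $\ph_1$.

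Naturality must be checked separately in the two kinds of morphisms. Naturality in $\be\in\sA^n$, for fixed $\si$, is just naturality of $\ph$ in the $\sA^m$ variable, because $\ph$ is a natural transformation between functors out of $\sF(\bn,\mathbf{1})\times\sF(\bn,\bn)\times\sA^n$.

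Naturality in the morphisms of $\sP(n)$ is the substantive part. A general morphism is $u_{\rh,\si}$, which by equivariance is the pair $(\tilde\mu,\si)$ with $\mu=\rh\si^{-1}$. The functor $\PS\com\mathrm{proj}$ sends $u_{\rh,\si}\times\id_\be$ to the identity of $\PS(\ph_n,\be)$. So naturality forces
$$\tha(u_{\rh,\si},\id_\be) = \ph_{\ph_n,\si,\be}^{-1}\com \ph_{\ph_n,\rh,\be}\colon \PS(\ph_n,\rh\be)\rtarr \PS(\ph_n,\si\be).$$
The plan is therefore to read the construction preceding the lemma as defining $\tha$ on morphisms of $\sP(n)$ exactly by this formula, with the displayed formulas \autoref{keyperm} and \autoref{keyperm2} giving the values on objects. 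Naturality is then built in.

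What remains is to check that this actually defines a functor $\tha$ that is $\SI_n$-equivariant in the sense of \autoref{PPsDefn}(i).

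1. \emph{Identities.} The formula gives identities when $\rh=\si$.
2. \emph{Composition.} The composites $\ph^{-1}_{\si}\ph_{\rh}$ and $\ph^{-1}_{\ta}\ph_{\si}$ telescope to $\ph^{-1}_{\ta}\ph_{\rh}$. This matches the composition law $u_{\si,\ta}\com u_{\rh,\si}=u_{\rh,\ta}$ recorded in the isomorphism $\sP(n)(\SI_n,e_n)\times\SI_n\iso\mathbf{Mor}\sP(n)$.
3. \emph{Equivariance.} We need $\tha(u_{\rh\ta,\si\ta},\be)=\tha(u_{\rh,\si},\ta\be)$. This reduces to the pseudofunctor cocycle (associativity) condition for $\ph$ applied to the composable triple $\ph_n,\si,\ta$ in $\sF$. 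Since $\PS$ is strictly functorial on $\PI$, the $\ph$ components for composites inside $\LA$ are identities, and the cocycle identity reads
$$\ph_{\ph_n,\si\ta,\be}=\ph_{\ph_n,\si,\ta\be}\quad\text{up to the identification } \ph_n\si=\ph_n.$$
Comparing both sides then gives the required equality.

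The main obstacle is step 3. There one has to be careful that the coherence axiom of \autoref{PsFun} really degenerates to this identity when two of the three maps lie in $\PI$, where the functor is strict. The thing to verify first is exactly that the $\ph$ components vanish, i.e.\ are identities, on composites of $\PI$-maps.
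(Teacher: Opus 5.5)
Your construction is the paper's own: the component of $\ph_n$ at $(\si,\be)$ is the pseudofunctoriality isomorphism $\PS(\ph_n,\si\be)\rtarr\PS(\ph_n\si,\be)=\PS(\ph_n,\be)$ of \autoref{keyc}, extended over the morphisms of $\sP(n)$ by equivariance, and your explicit checks of functoriality and equivariance supply details the paper leaves implicit. One slip in step 3: since $\ph_{\si,\ta}=\id$ on $\PI$ and $\ph_n\si=\ph_n$, the cocycle condition for $(\ph_n,\si,\ta)$ actually reads $\ph_{\ph_n,\si\ta,\be}=\ph_{\ph_n,\ta,\be}\com\ph_{\ph_n,\si,\ta\be}$, whereas your version drops the factor $\ph_{\ph_n,\ta,\be}$ (so its two sides do not even have the same target); it is this common factor that cancels from $\ph_{\ph_n,\si\ta,\be}^{-1}\com\ph_{\ph_n,\rh\ta,\be}$ to give $\tha(u_{\rh\ta,\si\ta},\be)=\tha(u_{\rh,\si},\ta\be)$.
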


This lemma will be the key to verifying the first of the following two intuitive results.
We defer discussion of their proofs to \autoref{Qdetails}.

\begin{prop}\label{QProp1}
For each $\sB\in \FPR$,  $\bQ \sB$ is a $\sP$-pseudoalgebra.
\end{prop}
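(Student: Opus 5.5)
The plan is to verify the axioms of \autoref{PPsDefn} one by one. The guiding principle is that every structural isomorphism of $\bQ\sB$ is a composite of the pseudofunctoriality constraints $\ph_1$ and $\ph$ of $\sB$ from \autoref{FPRdiag}. Coherence will then follow from the coherence theorem for pseudofunctors: any two composites of constraint $2$-cells between the same formal composite of $1$-cells in $\sF$ agree. The strictness of $\sB$ on $\PI$ is consistent with this, because the constraint cells for composites lying entirely in $\PI$ are identities.

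\emph{Step 1: $\tha_n$ is an equivariant functor.} On morphisms I set $\tha_n(\tilde{\si},e_n,\be)$ to be the component at $\be$ of the constraint cell $\ph\colon \PS(\ph_n,\si\be)\Rightarrow \PS(\ph_n\si,\be)=\PS(\ph_n,\be)$ of \autoref{keyc}, precomposed with $\PS(\ph_n,\si\be)$ applied to the morphism part of $\be$. I then extend to all of $\mathbf{Mor}\,\sP(n)$ via the isomorphism $\sP(n)(\SI_n,e_n)\times\SI_n\iso\mathbf{Mor}\,\sP(n)$, as forced by \autoref{keyperm2}. Equivariance then holds by construction. Functoriality amounts to the identity $u_{\mu\nu,e_n}=u_{\nu,e_n}\circ(u_{\mu,e_n}\text{ translated by }\nu)$. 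Under $\tha$ this becomes the statement that the constraint for $\ph_n\circ(\mu\nu)$ equals the composite of the constraints for $\ph_n\circ\nu$ and $(\ph_n\nu)\circ\mu$. That is exactly the associativity coherence of $\ph$ applied to the triple $(\ph_n,\nu,\mu)$, using that $\sB(\nu)\sB(\mu)=\sB(\nu\mu)$ strictly on $\PI$. Identities go to identities by the same argument, and naturality in $\be$ is naturality of $\ph$.

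\emph{Step 2: unit and composition isomorphisms.} The unit cell comes from $\ph_1$, since $\sP(1)$ is trivial and $\tha_1(e_1,-)=\PS(\ph_1,-)=\PS(\id_{\mathbf 1},-)$. For composition, consider $c\in\sP(k)$ and $d_i\in\sP(j_i)$ with $j=\sum j_i$. By \autoref{cando} and equivariance, it suffices to construct the cell on objects $e_k$ and $e_{j_i}$, where $\ga(e_k;e_{j_1},\dots,e_{j_k})=e_j$. On the $\sF$ side this corresponds to the identity $\ph_k\circ(\ph_{j_1}\vee\cdots\vee\ph_{j_k})=\ph_j$ in $\sE$. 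The required isomorphism
$$\tha_k\bigl(e_k;\tha_{j_1}(e_{j_1},-),\dots,\tha_{j_k}(e_{j_k},-)\bigr)\iso\tha_j(e_j,-)$$
is the composite of two cells. The first is the isomorphism $\sB(\ph_{j_1})\times\cdots\times\sB(\ph_{j_k})\iso\sB(\ph_{j_1}\vee\cdots\vee\ph_{j_k})$ of \autoref{wedprod}, iterated using associativity of $\vee$ and $\times$. The second is the constraint $\ph$ for the composite with $\ph_k$. For general objects $(c;d_1,\dots,d_k)$, I transport these cells along the $\ph_n$ of \autoref{cando}. Alternatively, I can observe directly that $\ga$ of permutations is a block permutation composed with $\ph_j$, so the cell is again a composite of constraint cells.

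\emph{Step 3: coherence axioms (the main obstacle).} I must check the associativity coherence for iterated $\ga$, the unit coherences, and the compatibility of the composition cells with equivariance and with the morphisms of the $\sP(n)$. The difficulty is that the wedge isomorphisms of \autoref{wedprod} are not themselves constraint cells. They are built as products of cells $\pi_n\sB(f\vee g)\iso\sB(f\pi_m)$, where the projections are strict. I would first prove a lemma: each wedge isomorphism is the unique $2$-cell whose composite with every projection $\sB(\pi)$ is the canonical constraint cell associated with the factorization $\pi\circ(f\vee g)=f\circ\pi$ in $\sF$. This uniqueness holds because $\sB_n=\sA^n$ strictly, so a cell into $\sA^n$ is determined by its projections. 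Every diagram in the axioms of \autoref{PPsDefn} then becomes an equation between $2$-cells into some $\sA^n$. After postcomposing with the strict projections, both sides are composites of constraint cells of $\sB$ between the same formal composites of $1$-cells in $\sF$. Such composites agree by pseudofunctor coherence, with strictness on $\PI$ ensuring that the projections and permutations contribute only identity constraints. This reduction is the step I expect to require the most careful bookkeeping.
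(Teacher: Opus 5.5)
Your Steps 1 and 2 build the same structure as the paper. The paper takes $\ph_1$ from the unit constraint of $\sB$ and the composition cell on $(e_k;e_{j_1},\dots,e_{j_k})$ from the isomorphism of \autoref{wedprod} followed by the composition constraint. It then extends to other objects by equivariance and \autoref{cando}, as in its diagram \eqref{libel}. Your check that $\tha_n$ is functorial on $\sP(n)$ is correct and is something the paper leaves implicit; it uses the associativity constraint of $\sB$ for $\ph_n\circ\mu\circ\nu$ and strictness on $\PI$.

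The gap is in how Step 3 is supposed to work. The axioms of \autoref{PPsDefn} are equations between $2$-cells whose target is $\sA$ itself, so there is nothing to project, and the cells in them are not composites of constraint cells. Write $\ph_{f,g}\colon\sB(f)\sB(g)\Rightarrow\sB(fg)$ for the constraints and $w_J\colon\prod_r\sB(\ph_{j_r})\Rightarrow\sB(\ph_{j_1}\vee\cdots\vee\ph_{j_k})$ for the wedge isomorphism. Your composition cell is then $\ph_{\ph_k,\vee_r\ph_{j_r}}\circ(\sB(\ph_k)\ast w_J)$, in which $w_J$ is whiskered on the left by $\sB(\ph_k)\colon\sA^k\rtarr\sA$. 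Its source $\sB(\ph_k)\circ\prod_r\sB(\ph_{j_r})$ is not in general a composite of $\sB$-images of maps of $\sF$. A product of the functors $\sB(f_r)$ is only isomorphic to $\sB(\vee_r f_r)$, and $w_J$ is assembled by pairing in $\mathbf{Cat}$, which lies outside the pasting calculus of $\sB$. So neither your projection lemma nor the coherence theorem for pseudofunctors applies to such cells. Your claim that after postcomposing with projections both sides are composites of constraint cells is therefore false as stated.

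The reduction can be repaired, but it needs an extra step. For Operadic Composition at identity objects, set
\[L=\prod_{r,s}\sB(\ph_{l_{r,s}}),\qquad L_r=\prod_s\sB(\ph_{l_{r,s}}),\qquad L'=\prod_r\sB(\ph_{l_r}),\]
with wedge isomorphisms $w_L$, $w_{L_r}$, $w_{L'}$. First use interchange to slide $\ph_{\ph_k,\vee\ph_{j_r}}$ past $w_L$. Then apply associativity of $\sB$ to the triple $(\ph_k,\vee_r\ph_{j_r},\vee_{r,s}\ph_{l_{r,s}})$. This puts both pasting composites in the form $\ph_{\ph_k,\vee_r\ph_{l_r}}\circ(\sB(\ph_k)\ast\Theta_i)$, where
\[\Theta_1=\ph_{\vee\ph_{j_r},\vee\ph_{l_{r,s}}}\circ(\sB(\vee_r\ph_{j_r})\ast w_L)\circ(w_J\ast L),\qquad \Theta_2=w_{L'}\circ\prod_r\big(\ph_{\ph_{j_r},\vee_s\ph_{l_{r,s}}}\circ(\sB(\ph_{j_r})\ast w_{L_r})\big).\]
These are cells into $\sA^k$, so your lemma now lets you check $\Theta_1=\Theta_2$ one coordinate at a time.

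Even so, the $r$th projections still contain the non-constraint cell $\sB(\ph_{j_r})\ast w_{L_r}\ast\sB(\pi)$, with $\pi$ the block projection. To deal with it on the $\Theta_1$ side, use interchange once more and a block form of your lemma: the projection of $w_L$ onto the $r$th block is $w_{L_r}$, whiskered by $\pi$, followed by a constraint composite. That cell then appears as the same invertible first factor on both sides and cancels. What remains is a pair of constraint composites with equal source and target strings, and only at this point does pseudofunctor coherence finish the argument. The other two items you list in Step 3, naturality of the composition cells in the $\sP$-variables and their equivariance, need the same peel-off-and-cancel treatment.

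The paper does not isolate this point either. It transports the axioms to their $\sE$-analogues via \eqref{libel} and asserts that those are checked by hand.
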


\begin{prop}\label{QProp2}
$\bQ$ extends to  a $2$-functor $\bQ\colon \FPR \rtarr \PPs$.
\end{prop}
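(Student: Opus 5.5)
The plan is to extend $\bQ$ from objects to $1$-cells and $2$-cells, check that the resulting data satisfy the axioms of \autoref{PPsDefn}, and then check strict $2$-functoriality.

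\textbf{$1$-cells.} Let $\sB,\sB'\in\FPR$ with $\sA=\sB(1)$ and $\sA'=\sB'(1)$, and let $F\colon\sB\rtarr\sB'$ be an $\sF$-pseudomorphism. By \autoref{Fps}, $F$ restricts on $\PI$ to a strict natural transformation $\bR\sA\rtarr\bR\sA'$. Because $\bR\bL=\id$ and the counit in \autoref{LRAd} is the identity, this restriction is $\bR f$ for the based functor $f=F_1\colon\sA\rtarr\sA'$. In particular $F_n=f^n$, and $f$ commutes strictly with permutations.

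We set $\bQ F=f$. The pseudotransformation data of $F$ supply, for each map $\ps$ of $\sF$, natural isomorphisms
\[
f^k\,\sB(\ps)\iso\sB'(\ps)\,f^n .
\]
Restricting to $\ps=\ph_n$ gives isomorphisms $f\,\PS(\ph_n,\be)\iso\PS'(\ph_n,f^n\be)$. We extend these to all of $\sP(n)$ by $\SI_n$-equivariance, exactly as in \autoref{keyperm2}: on the object $\si$ we use the component at $\si\be$. Since $F$ commutes strictly with permutations, this extension is well defined.

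We then verify the pseudomorphism axioms. The unit axiom follows from the unit coherence of $F$ together with $\ph_1$. Compatibility with the operadic composition $\ga$ follows from the pseudonaturality axiom of $F$ relating its components at composites to the isomorphism $\ph$ of \autoref{FPRdiag}. For both verifications we use \autoref{decompose} to write operadic composites as wedges of canonical products precomposed with permutations, and \autoref{wedprod} to turn wedges into products.

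\textbf{$2$-cells.} An $\sF$-$2$-cell $\om\colon F\Rightarrow G$ has a component at $\mathbf{1}$, a natural transformation $\om_1\colon f\Rightarrow g$. We set $\bQ\om=\om_1$. The $\sF$-$2$-cell condition at the maps $\ph_n$, extended by equivariance, is exactly the compatibility condition required of a $2$-cell in $\PPs$.

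\textbf{Strict $2$-functoriality.} On underlying functors and transformations, $\bQ$ is literally evaluation at $\mathbf{1}$, which is strictly functorial. The isomorphism data of a composite pseudotransformation are defined by pasting. Restricting this pasting to $\ph_n$ and extending equivariantly commute with each other, so $\bQ(GF)=\bQ G\,\bQ F$ on the nose. Identities and vertical and horizontal composition of $2$-cells are likewise preserved.

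\textbf{Main obstacle.} The hard step is compatibility of $\bQ F$ with the composition isomorphisms of the $\sP$-pseudoalgebras. The structure isomorphism of $\bQ\sB$ for $\ga(\si;\ta_1,\dots,\ta_k)$ was assembled, in the proof of \autoref{QProp1}, from $\ph$ applied to $\ph_k\circ(\ph_{n_1}\vee\cdots\vee\ph_{n_k})$, together with the isomorphisms of \autoref{wedprod} and the permutation identifications of \autoref{keyc}. We must show that $F$ respects each of these ingredients. Since $F$ is strict on $\PI$, it commutes on the nose with the projection maps used in \autoref{wedprod}; this reduces the wedge part to componentwise statements. The remaining part is the modification-style axiom of a pseudotransformation applied to the composable pair $(\ph_k,\ph_{n_1}\vee\cdots\vee\ph_{n_k})$. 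I expect this to be a diagram chase of the kind deferred to \autoref{Qdetails}, with no new ideas required.
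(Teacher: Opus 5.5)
Your proposal is correct and follows essentially the same route as the paper: restrict the $\sF$-pseudomorphism to $\PI$ to get $\bR f$ with $f=F_1$, use its pseudonaturality isomorphism at $\ph_n$ (the permutation square commutes strictly because $F$ is strict on $\PI$), extend equivariantly over $\sP(n)$, and handle composites and $2$-cells by pasting. You give more detail than the paper's sketch, for example by reducing the composition axiom to the pseudotransformation axiom at the pair $(\ph_k,\ph_{n_1}\vee\cdots\vee\ph_{n_k})$; one check you leave implicit is naturality of your $\ze_n$ with respect to the non-identity morphisms of $\sP(n)$, which follows from that same axiom applied to the pair $(\ph_n,\si)$.
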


\subsection{The $2$-functor $\bR\colon \PPs \rtarr \FPR$}\label{R}

Intuitively, the idea of $\bR$ is that when restricted to a $\sP$-pseudoalgebra $\sA$, the functor $\bR \sA\colon \PI \rtarr \bf{Cat}$ naturally extends to a pseudofunctor $\PS \colon \sF \rtarr \bf{Cat}$, which is obviously strictly special.   
%We think of $\bR\sA$ as sending the $0$-cell $\bn$ to $\sA^n$ and a $1$-cell $\bm \rtarr \bn$ to a map $\sA^m\rtarr \sA^n$.    

We focus on extending $\bR$ from a functor on $\PI$ to a functor on $\sE$.  Using that projections in $\PI$ map to projections in $\bR\sA$, it is not hard to show that we can then  extend the resulting functor on $\sE$ to a functor on $\sF$, using the factorization of maps in $\sF$ as composites of projections and maps in $\sE$ given in \autoref{decompose}.  We start by defining 
$$\PS(\ph_n, \be) = \tha(e_n,\be)$$
for an object or morphism $\be$ of $\sA^n$, thus identifying the canonical $n$-fold products of $\sA$ and $\bR \sA$. For a map  
$\xi\colon \bm \rtarr \mathbf{n}$ in $\sE$, we define $\PS(\xi)\colon \sA^m \rtarr \sA^n$ following the proof of \autoref{decompose}.  For $1\leq j\leq n$, set  $m_j = |\xi^{-1}(j)|$ and, for $\si\in \SI_n$, let $\si_m\in \SI_m$ be the block sum permutation defined in \autoref{decompose}.  By \autoref{wedprod}, $\PS$ must take wedges of maps in $\sE$ to products of their actions, hence we define $\PS(\xi)$ by the commutative diagram
%functoriality and use of projections, any  functor $\bF\colon sF \rtarr \bf{Cat}$ takes sums to products,                                                                              
$$ \begin{tikzcd}
        {\sA^m} & {\sA^m}\\
        & {\sA^n}
        \arrow["\sigma_m", from=1-1, to=1-2]
        \arrow["{\PS(\xi)}"', from=1-1, to=2-2]
        \arrow["{\PS(\ph_{m_1})\times\cdots\times \PS(\ph_{m_n})}", from=1-2, to=2-2]
    \end{tikzcd}
   $$
The map $\tha_n(\tilde{\si})$ then gives the arrow $\Rightarrow$ in the following coherence diagram 
\begin{equation}\label{FPRdiag2}
 \xymatrix{
\sA^n \ar[d]_{\ps_n} \ar[r]^-{\si}  \drtwocell<\omit>   & \sA^n \ar[d]^{\psi_n}\\
\sA \ar[r]_{=} & \sA \\}
\end{equation}
for the pseudoaction of $\sE$ that we are constructing.  As in \autoref{keyc}, this is the key case of the diagram \autoref{FPRdiag}.   The remaining cases follow by decomposing maps of $\sE$ as in \autoref{decompose} and using \autoref{wedprod} to construct $\ph$ in \autoref{FPRdiag} in general from the $\ph$ in \autoref{FPRdiag2}. We defer details of the verifications of the following two intuitive results to \autoref{Rdetails}. 

\begin{prop}\label{RProp1}
 For each $\sA\in \PPs$,  $\bR\sA$ is an $\sF$-pseudoalgebra.
\end{prop}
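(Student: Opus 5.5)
We have to produce, for a $\sP$-pseudoalgebra $(\sA,\tha)$, a pseudofunctor $\PS\colon \sF\rtarr \mathbf{Cat}$ with $\PS(\bn)=\sA^n$. It must restrict to the strict functor $\bR\sA$ on $\PI$, and it must come with coherent invertible $2$-cells $\ph_1$ and $\ph$ as in \autoref{FPRdiag}.

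The construction goes in three stages.

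\textbf{Definition on $1$-cells.} First we extend the definition already given on $\sE$ to all of $\sF$. Take $\ps\in\sF(\bm,\bn)$ and write $\ps=\xi\pi$, with $\pi\colon\bm\rtarr\bk$ the projection collapsing $\ps^{-1}(0)$ and $\xi\in\sE$, as in \autoref{decompose}. Set $\PS(\ps)=\PS(\xi)\,\bR\sA(\pi)$. This factorization is unique, so the definition is well posed. On $\PI$ we put $\PS=\bR\sA$ on the nose. For this to be consistent, $\PS(\xi)$ must agree with $\bR\sA(\xi)$ when $\xi\in\LA$. Such a $\xi$ is a wedge of $\ph_1$'s and $\ph_0$'s precomposed with a permutation. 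So the consistency reduces to checking that $\tha_1(e_1,-)=\Id$ and $\tha_0(e_0)=\ast$. These hold because $\sP$ is reduced and the unit pseudoalgebra isomorphism is taken to be the identity (normalization, cf.\ \autoref{norm1}); if it is not, we normalize first. In addition, $\ph_1$ will be taken to be the identity.

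\textbf{Construction of the composition cells $\ph$.} For composable $f\colon\bm\rtarr\bn$ and $g\colon\bn\rtarr\bk$ we need an isomorphism $\PS(g)\PS(f)\Rightarrow\PS(gf)$.

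Projections commute past $\sE$-maps strictly up to reindexing, and $\bR\sA$ turns projections into coordinate projections. Using these two facts and the product decomposition of \autoref{wedprod}, we reduce to $f,g\in\sE$ and $k=1$.

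In that case $g=\ph_n\tau$ and $f=(\ph_{m_1}\vee\cdots\vee\ph_{m_n})\si$. Then $\PS(g)\PS(f)$ is the functor $\be\mapsto\tha_n(e_n;\,\tau\cdot(\tha_{m_j}(e_{m_j},-))_j)(\si\be)$. On the other hand, $\PS(gf)=\tha_m(e_m,-)$ precomposed with the appropriate block permutation.

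The isomorphism between these is the composite of three pieces:
\begin{itemize}
\item the equivariance identity $\tha(\al\tau,\be)=\tha(\al,\tau\be)$, which moves $\tau$ into $\sP(n)$;
\item the composition pseudoalgebra isomorphism $\tha_n(\tilde\tau;\tha_{m_j})\iso\tha_m(\ga(\tilde\tau;e_{m_j}),-)$;
\item the image under $\tha_m$ of the unique morphism in $\sP(m)=\sE\SI_m$ from the resulting block permutation $\tau\langle m_j\rangle$ to $e_m$.
\end{itemize}
This is exactly how \autoref{FPRdiag2} arises from $\tha_n(\tilde\si)$. For general $k$ we take products over the fibers. When $f$ or $g$ lies in $\PI$, the cell is forced to be the identity, because the naturality of the chaotic cells makes all these composites collapse.

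\textbf{Coherence.} Next we verify the pseudofunctor axioms: the unit triangles and the pentagon-type associativity condition for a triple $h,g,f$. Both sides of the associativity condition are composites of three kinds of cells: $\tha$ applied to morphisms of the chaotic categories $\sP(j)$, the composition isomorphisms of the pseudoalgebra, and equivariance identities. Two facts then close the argument. Every diagram of morphisms in $\sE\SI_m$ commutes. The pseudoalgebra coherence axioms for the composition isomorphisms (associativity of $\ga$ up to the given $2$-cells) make the remaining parts agree. Naturality of all the cells in $\be$ is inherited from naturality of $\tha$.

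I expect the main obstacle to be this associativity check in the mixed cases. There, projections interleave with permutations, and one has to track how block permutations and basepoint insertions reindex. The projection reduction and the product reduction have to be shown compatible with the chosen $\ph$. My plan is to first prove a lemma that the constructed $\ph$ is compatible with wedges, i.e.\ $\ph_{g\vee g',f\vee f'}=\ph_{g,f}\times\ph_{g',f'}$. This lets the general verification reduce to the single-target $\sE$ case, where everything is a pasting of $\sP$-pseudoalgebra cells.
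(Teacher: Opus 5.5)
Your route is the paper's. You extend from $\sE$ to $\sF$ through the projection factorization of \autoref{decompose}, and reduce the composition cells to $f,g\in\sE$ with target $\mathbf{1}$ via \autoref{wedprod}. You build each cell from equivariance, the pseudoalgebra isomorphism $\ph(n;m_1,\dots,m_n)$ and a morphism of the chaotic category $\sP(m)$, which is the cell \autoref{FPRdiag2}. Associativity then reduces to Operadic Composition plus commutativity of all diagrams in $\sP(m)$. On units you are more careful than the paper. With $\PS(\ph_1)=\tha_1(e_1,-)$, the restriction to $\PI$ is $\bR\sA$ only if $\tha_1(e_1,-)=\mathrm{Id}$. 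So normality, or an adjustment, really is needed: either normalize first (which needs its own routine check), or set $\PS(\mathrm{id}_{\mathbf 1})=\mathrm{Id}$ and absorb $\ph_1$ into the composition cells.

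The one real error is the sentence ``when $f$ or $g$ lies in $\PI$, the cell is forced to be the identity.'' Take $f=\si\in\SI_n\subset\PI$ and $g=\ph_n$. Since $\ph_n\si=\ph_n$, you get $\PS(g)\PS(f)=\tha_n(e_n;\si\,-)$ but $\PS(gf)=\tha_n(e_n;-)$. For $n=2$ these are $(a,b)\mapsto b\otimes a$ and $a\otimes b$, so no identity cell exists. The cell your own recipe produces here is $\tha_n(\tilde{\si})$, which is precisely the phenomenon of \autoref{example} and the cell \autoref{FPRdiag2} that you cite. Similarly, take the basepoint insertion $f\colon\mathbf{1}\rtarr\mathbf{2}$ with $f(1)=1$, and $g=\ph_2$. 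The cell is then the unitor $a\otimes e\iso a$ given by $\ph(2;1,0)$. The cell is forced to be the identity only when both $f$ and $g$ lie in $\PI$ (postcomposition with maps of $\PI$ is harmless). Precomposing canonical products with permutations and basepoint insertions is exactly where the pseudostructure lives.

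This is not cosmetic for your coherence step. Triples such as $(\ph_n,\si,\ta)$ are genuine instances of the associativity axiom. That instance holds because $u_{\ta,e_n}\circ u_{\si\ta,\ta}=u_{\si\ta,e_n}$ in $\sP(n)$. The analogous triples involving insertions need the Operadic Composition and Identity axioms. None of these can be dismissed as trivial. Relatedly, with $k=1$ you have $g=\ph_n\ta=\ph_n$ and $gf=\ph_m$. So the permutation absorbed by the chaotic morphism is the shuffle $\si$ of $f$, not a $\ta$ attached to $g$; the paper's sketch has the same looseness.
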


\begin{prop}\label{RProp2}
    $\bR$ extends to a $2$-functor $\bR\colon \PPs \rtarr \FPR$.
\end{prop}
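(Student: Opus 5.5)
To prove \autoref{RProp2}, I would define $\bR$ on $1$-cells and $2$-cells of $\PPs$ in the same way $\bR$ was defined on objects: extend from $\PI$ to $\sE$ through the canonical products and permutations, then to $\sF$ through \autoref{decompose}. After that I would check $2$-functoriality, which should be strict because $\bR$ changes no underlying data.

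\textbf{Step 1: $1$-cells.} Let $(F,\om)\colon \sA\rtarr\sA'$ be a $\sP$-pseudomorphism, with coherence isomorphisms
$$\om_n\colon \tha'_n(\al, F\be)\Rightarrow F\tha_n(\al,\be).$$
Define $\bR F$ levelwise by $(\bR F)_n = F^n\colon \sA^n\rtarr \sA'^n$. This is a strict natural transformation on $\PI$, since $F$ preserves base objects up to the unit part of the structure (I would use the normalization of $\sP(0)$ here) and commutes with projections and permutations on the nose. For the pseudotransformation $2$-cell at $\xi\in\sF(\bm,\bn)$ I would take:
\begin{enumerate}[(i)]
\item at $\ph_n$, the component $\om_n$ evaluated at $e_n$;
\item for $\xi\in\sE$, the product of the $\om_{m_j}$ (at $e_{m_j}$) precomposed with $\si_m$, using that the formula for $\PS(\xi)$ is a product over blocks;
\item for general $\xi$, the $2$-cell of the $\sE$-part in the factorization of \autoref{decompose}, noting that projections act strictly.
\end{enumerate}

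\textbf{Step 2: verifying $\bR F$ is an $\sF$-pseudomorphism.} The pseudonaturality axioms require compatibility with the constraint cells $\ph$ and $\ph_1$ of \autoref{FPRdiag}. Those constraints were built from $\tha_n(\tilde{\si})$ in \autoref{FPRdiag2} and from the composition and unit isomorphisms of the pseudoalgebra. So each axiom should reduce to one of the following, after decomposing into blocks via \autoref{wedprod}:
\begin{enumerate}[(i)]
\item naturality of $\om_n$ in the $\sP(n)$ variable, applied to the morphisms $\tilde{\si}$;
\item the compatibility of $\om$ with the composition isomorphisms;
\item the compatibility of $\om$ with the unit isomorphisms.
\end{enumerate}

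\textbf{Step 3: $2$-cells.} Given a $\sP$-$2$-cell $\mu\colon F\Rightarrow G$, let $\bR\mu$ have components $\mu^n$. The $\sF$-$2$-cell condition reduces, block by block, to the pseudoalgebra $2$-cell condition relating $\mu$ with $\om$ at $e_n$.

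\textbf{Step 4: strict $2$-functoriality.} The component assignments are visibly compatible with vertical and horizontal composition and identities. For composites $GF$, the composite $\om$ is defined by pasting, and the block product formula commutes with pasting.

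\textbf{Main obstacle.} The hardest part is Step 2 for composites $\xi\zeta$ in $\sF$ where the factorizations of \autoref{decompose} of $\xi$, $\zeta$ and $\xi\zeta$ do not fit together. There the constraint $\ph$ is a composite of several composition and permutation isomorphisms. My first approach would be to reduce to the key case $\sE(\bn,\mathbf{1})\times\LA(\bn,\bn)$ of \autoref{keyc}, together with composition of canonical products, by writing every constraint $\ph$ as a pasting of those generating cells. Then $\om$-compatibility of each generator gives compatibility of the pasting. If that fails, I would fall back on verifying the axioms only for generators of $\sF$ under composition and wedge, and arguing by induction on factorization length.
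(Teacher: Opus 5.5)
Your outline follows the paper's route. You take $F^n$ levelwise and take the cell at $\ph_n$ from $\omega_n$ at $e_n$. You extend to $\sE$ and $\sF$ by block products (\autoref{wedprod}) and the factorization of \autoref{decompose}, and you reduce the pseudonaturality axioms to those of \autoref{pseudomap}. Your Step 2 list and ``main obstacle'' are exactly the pasting-diagram check that the paper delegates to its earlier drafts.

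But Step 1 has a genuine gap. A map of $\PI$-categories $\bR\sA\to\bR\sA'$ must be strictly natural for the injections in $\PI$, which insert base objects. Already for $\ph_0\colon\mathbf{0}\to\mathbf{1}$ this forces $F(e_{\sA})=e_{\sA'}$ on the nose. A $\sP$-pseudomorphism only provides an isomorphism $\omega_0\colon e_{\sA'}\to F(e_{\sA})$, and nothing in \autoref{pseudomap} makes it an identity. There is no ``normalization of $\sP(0)$'' to invoke, since $\sP(0)$ is the trivial category. For instance, $\bV$ of a strong symmetric monoidal functor has $\omega_0=\varepsilon$.

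Your own rules show the clash. Rule (i) with $n=0$ assigns the cell $\omega_0$ to $\ph_0\in\PI$. Rule (ii) assigns $\omega_1\times\omega_0$ to the injection $\ph_1\vee\ph_0\colon\mathbf{1}\to\mathbf{2}$. Yet \autoref{Fps} requires an $\sF$-pseudomorphism to have identity cells on all of $\PI$. Permutations and projections are harmless: for normal pseudoalgebras the Preserve Identity axiom forces $\omega_1=\id$ at $e_1$.

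So, as stated, your $\bR F$ is not an $\sF$-pseudomorphism when $\omega_0\neq\id$. To close the gap you have two options:
\begin{enumerate}[(i)]
\item Restrict the $1$-cells of $\PPs$ to pseudomorphisms with $\omega_0=\id$, that is, strictly based $F$. After that, Steps 2--4 go through as you describe.
\item Weaken the strictness requirement on $1$-cells in \autoref{Fps}, so that only $\LA^{\perp}$ must act strictly and the injections carry the cells built from $\omega_0$.
\end{enumerate}
Be aware that the paper's proof passes over the same point when it speaks of ``the evident morphism $\bR\sA\to\bR\sB$ of $\PI$-categories.'' This is a hypothesis the construction genuinely needs, not something that follows from normality.
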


\subsection{The proof of \autoref{thm2}}\label{Sketch2}

Together with \autoref{FPsFPR}, the following theorem gives \autoref{thm2}. 

\begin{thm}
$\bQ$ and $\bR$ are inverse natural isomorphisms of $2$-functors.
\end{thm}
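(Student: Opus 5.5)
We prove directly that $\bQ\bR = \Id$ on $\PPs$ and $\bR\bQ = \Id$ on $\FPR$, checking on $0$-cells, then $1$-cells, then $2$-cells. The underlying categories cause no trouble: $\bQ\sB = \sB(1)$ and $(\bR\sA)(1) = \sA$, so on underlying based categories both composites are the identity. By \autoref{LRAd}, a strictly special $\sF$-pseudoalgebra is determined on $\PI$ by $\sB(1)$, so $\bR\bQ\sB$ and $\sB$ agree on $\PI$. What remains is to compare the action functors and the structure isomorphisms.

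\textbf{The composite $\bQ\bR$.} Start with $\sA\in\PPs$ with action $\tha$. By construction $\PS(\ph_n,\be) = \tha(e_n,\be)$, and the permutation action on $\sA^n$ is the one from $\bR\sA$ on $\LA$. Then \autoref{keyperm} and \autoref{keyperm2} give
\[
\tha'(\tilde\si,\ta,\be) = \PS(\ph_n,\si\ta\be) = \tha(e_n,\si\ta\be).
\]
Equivariance of $\tha$ identifies this with $\tha(\tilde\si,\ta,\be)$. The same holds on morphisms once we check that the $2$-cell $\ph$ of \autoref{FPRdiag2}, which was defined from $\tha_n(\tilde\si)$, is sent back by \autoref{cando} to $\tha_n(\tilde\si)$ itself. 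The pseudoalgebra structure isomorphisms (unit and composition) of $\bQ\bR\sA$ are built from the $\ph$'s of $\bR\sA$, and those were built from the structure isomorphisms of $\sA$ through the decomposition of \autoref{decompose}. Restricted to canonical products, this round trip is the identity. Hence $\bQ\bR\sA = \sA$ on the nose.

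\textbf{The composite $\bR\bQ$.} Start with $\sB\in\FPR$. The pseudofunctor $\bR\bQ\sB$ sends $\xi\in\sE$ to $(\PS(\ph_{m_1})\times\cdots\times\PS(\ph_{m_n}))\si_m$. In $\sB$, however, $\sB(\xi) = \sB(\ph_{m_1}\vee\cdots\vee\ph_{m_n})\sB(\si_m)$ only up to the constraint $\ph$, and $\sB$ sends wedges to products only up to the isomorphism of \autoref{wedprod}. So equality of the functors $\sB(\xi)$ requires a normalization: pseudofunctor data on $\sF$ whose restriction to $\PI$ is $\bR\sA$ must be determined by its values on the canonical products $\ph_n$ together with the $2$-cells of \autoref{keyc}.

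This normalization is the main obstacle, and it depends on the precise definition of $\FPR$ in \autoref{Fps}. I would argue it as follows. Since $\sB$ is strictly special, postcomposing with the projections $\de_j$, which $\sB$ sends strictly to projections, shows that $\sB(\xi)$ is determined by the composites $\sB(\de_j\xi)$. Each $\de_j\xi$ equals $\ph_{m_j}$ precomposed with a map in $\PI$, and on $\PI$ the pseudofunctor $\sB$ is strict. Provided the relevant composition constraints with $\PI$ are identities, as in the constraints used in \autoref{wedprod}, this gives $\sB(\xi) = \bR\bQ\sB(\xi)$. The remaining structure isomorphisms then agree because both are generated by the cases in \autoref{keyc}. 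If the constraints with $\PI$ on the relevant side are not identities, the comparison instead yields an invertible pseudotransformation that is the identity on $\PI$. This is still a natural isomorphism of $2$-functors, which is what the theorem asserts.

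\textbf{$1$-cells and $2$-cells.} For morphisms, $\bQ$ and $\bR$ are the identity on underlying functors $\sA\to\sA'$. The structure isomorphisms transfer through the same correspondence of $\ph_n$ with $e_n$, so the round-trip argument above, applied to $1$-cells and then to $2$-cells (which are natural transformations on $\sA$ in both $2$-categories), gives naturality. Combined with \autoref{FPsFPR}, this completes \autoref{thm2}.
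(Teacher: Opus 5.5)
Your treatment of $\bQ\bR$ matches the paper's argument. The underlying categories and canonical products coincide by construction, equivariance propagates this to all of $\sP(n)$, and the key $2$-cell of \autoref{keyc} round-trips through \autoref{FPRdiag2} and \autoref{cando}.

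On $\bR\bQ$ you have found a real problem that the paper's proof passes over. The paper says everything is ``generated by these under equivariance and wedges'', but that determines $\sB(\xi)$ only up to the constraint isomorphisms of $\sB$. In fact $\bR\bQ\sB=\sB$ fails in general:
\begin{itemize}
\item Take $\sB=\bR\sA$ and let $\xi_0=\ph_2\vee\id_{\mathbf 1}\colon\mathbf 3\rtarr\mathbf 2$, which is not in $\PI$.
\item Choose a functor $G\neq\sB(\xi_0)$ with an isomorphism $\kappa\colon\sB(\xi_0)\iso G$, and transport: set $\sB'(\xi_0)=G$ and whisker every constraint involving $\xi_0$ by $\kappa^{\pm1}$.
\item Nothing on $\PI$ changes, so $\sB'\in\FPR$.
\item Yet $\bQ\sB'=\bQ\sB$. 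The only data of $\bQ\sB$ involving $\sB(\xi_0)$ sit in $\ph(2;2,1)$, where the isomorphism of \autoref{wedprod} changes by $\kappa$ and the composition constraint by $\kappa^{-1}$, so the change cancels.
\end{itemize}
Hence $\bQ$ is not injective on objects.

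Your proposal does not close this gap, however. The proviso in your normalization step cannot be assumed. \autoref{Fps} forces only constraints between two maps of $\PI$ to be identities. The example shows that the constraints you use, $\sB(\de_j)\sB(\xi)\iso\sB(\de_j\xi)$ and $\sB(\ph_{m_j})\sB(p)\iso\sB(\ph_{m_j}p)$, can be nontrivial. Those with a permutation on the right, $\sB(\ph_n)\sB(\si)\Rightarrow\sB(\ph_n\si)=\sB(\ph_n)$, are not identities in general, because they carry the symmetry (\autoref{example}).

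So your fallback is the real content, and it is only asserted. To prove it you need to do the following.
\begin{itemize}
\item Define $\omega_{\sB}\colon\sB\Rightarrow\bR\bQ\sB$ with identity components. For $\xi\in\sE$, take the $j$th coordinate of $\omega_\xi$ to be
$\sB(\de_j)\sB(\xi)\iso\sB(\de_j\xi)=\sB(\ph_{m_j}p_j)\iso\sB(\ph_{m_j})\sB(p_j)$,
where $p_j$ is the order-preserving projection onto $\xi^{-1}(j)$. Extend to all of $\sF$ via \autoref{decompose}.
\item Check that $\omega_f$ is the identity for $f\in\PI$.
\item Verify the pseudotransformation axiom against the constraints of $\sB$ and of $\bR\bQ\sB$; this is where pseudofunctor coherence of $\sB$ enters.
\item Verify naturality of $\omega$ in $\sF$-pseudomorphisms and $\sF$-$2$-cells.
\end{itemize}

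Even then you obtain $\bQ\bR=\Id$ and $\bR\bQ\iso\Id$. That is an equivalence in the sense the paper uses for $\bU$ and $\bV$ in \autoref{Sketch1}, not an isomorphism. Your closing sentence therefore overreaches on the ``isomorphic to $\FPR$'' half of \autoref{thm2}.
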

\begin{proof}
The underlying category $\sA$ of objects of $\PPs$ and $\FPR$ is preserved by both $\bR$ and $\bQ$, and by construction the canonical products $\sA^n\rtarr \sA$ agree under both $\bQ$ and $\bR$.  That is, under the composite $\bQ\bR$ or $\bR\bQ$, we see the identity of the left or the right side of the equality
$$ \tha(e_n,\be) =  \PS (\ph_n,\be),$$ 
where $\be$ is an $n$-tuple of objects or morphisms of $\sA$. By equivariance identifications, these identities remain true with $e_n$ replaced by $e_n\si$ on the left and $\be$ replaced by $\si\be$ on the right.   For the morphisms of $\sP$ and the transformations $\ph$ of $\sF$-pseudoalgebras in the key case of \autoref{keyc}, we have the following schematic diagram.  Its two triangles and its upper trapezoid commute trivially.
\[\begin{tikzcd}
    {\sP(n)\times \sA^n} & && {\sP(n)\times\sA^n} \\
    & {\sA^n} & {\sA^n} \\
    & \sA & \sA
    \arrow["{\id \times\si}", from=1-1, to=1-4]
    \arrow["\theta"', from=1-1, to=3-2]
   % \arrow["{\sigma^{-1}\times\sigma}", from=1-2, to=1-4]
    \arrow["\theta", from=1-4, to=3-3]
    \arrow["{e_n\times\id}"', hook', from=2-2, to=1-1]
    \arrow[""{name=0, anchor=center, inner sep=0}, "\sigma", from=2-2, to=2-3]
    \arrow["{\phi_n}"', from=2-2, to=3-2]
    \arrow["{e_n\times \id}", hook, from=2-3, to=1-4]
    \arrow["{\phi_n}", from=2-3, to=3-3]
    \arrow[shorten <=6pt, shorten >=6pt, Rightarrow, from=2-3, to=3-2]
    \arrow["=", from=3-2, to=3-3]
    \end{tikzcd}\]
 Thinking of $\sA$ as in $\FPR$, $\ph_n$ denotes its canonical $n$-fold product, obtained by applying 
 $\PS$ to $\ph_n\times \sA^n$, and $\ph$ gives the arrow  $\Rightarrow$.  The map $e_n$ is the inclusion of the trivial category in $\sP$ with image $e_n$.   Restricting $\sP(n)$ to that image, 
 $\tha(\tilde{\si})$ gives an arrow $\Rightarrow$ in the outer trapezoid.  These two transformations agree under either  $\bQ$ or $\bR$.  Since all other transformations $\ph$ and all other actions of $\tha$ on morphisms of $\sP(n)$ are generated by these under equivariance  and wedges (in the case of $\FPR$), these compatibilities are enough to show that $\bQ\bR$ and $\bR\bQ$ are isomorphisms on objects of the respective $2$-categories.  The corresponding identifications for the $1$-cells and $2$-cells of our $2$-categories are similar direct comparisons and are left to the reader. \end{proof}
   
\section{Sketch proof of \autoref{thm1}}\label{Sec3}

\subsection{The $2$-functor $\bU\colon \PPs \rtarr \SM$}\label{U}

Given a $\sP$-pseudoalgebra $\sA$, we define $\bU \sA = \sA$.  Its product $\otimes$ is determined by $\tha_2$ as the composite
\[\otimes \colon \sA\times \sA\xrightarrow{\cong} \{e_2\}\times \sA\times \sA\hookrightarrow \sP(2)\times \sA\times \sA\rightarrow \sA.\]
Its unit is the base object $e$ in $\sA$, that is the image of $\sP(0)$. We define the $2$-functor $\bU \colon \PPs \rtarr \SM$
by  $\bU \sA=(\sA,\otimes,e)$ on objects.  We will show that this elaborates to a $2$-functor, but we must first show that $\bU \sA$ is indeed a symmetric monoidal category.  We defer 
 proofs of the following results to \autoref{Udetails}.

\begin{prop}  For $\sA\in \PPs$, $\bU \sA$ is a symmetric monoidal category. \label{UProp1}
\end{prop}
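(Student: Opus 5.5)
We construct the structure isomorphisms of $\bU\sA=(\sA,\otimes,e)$ directly from the pseudoalgebra data of \autoref{PPsDefn}, and then check Mac Lane's coherence diagrams of \autoref{SMObj} by reducing each to the uniqueness of morphisms in the chaotic categories $\sP(j)$. Recall the data of a $\sP$-pseudoalgebra: the action functors $\tha_j$, the unit isomorphism $\eta\colon \tha_1(e_1,-)\Rightarrow \id$, and the composition isomorphisms
\[
\Gamma\colon \tha_k(\mu,\tha_{j_1}(\nu_1,-),\dots,\tha_{j_k}(\nu_k,-))\Rightarrow \tha_j(\ga(\mu;\nu_1,\dots,\nu_k),-).
\]
Both are equivariant and are subject to their own associativity and unit coherence axioms.

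The three structure isomorphisms are defined as follows.
\begin{enumerate}[(i)]
\item The associator is
\[
(a\otimes b)\otimes c=\tha_2(e_2;\tha_2(e_2;a,b),c)\xrightarrow{\Gamma}\tha_3(e_3;a,b,c)\xrightarrow{\Gamma^{-1}}\tha_2(e_2;a,\tha_2(e_2;b,c))=a\otimes(b\otimes c).
\]
This uses $\ga(e_2;e_2,e_1)=e_3=\ga(e_2;e_1,e_2)$, together with $\eta^{-1}$ to view $a$ as $\tha_1(e_1,a)$.
\item The unitors come from $\Gamma$ applied to $\ga(e_2;e_1,0)=e_1$, where $0$ is the object of $\sP(0)$ whose image is $e$. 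This gives $a\otimes e\iso \tha_1(e_1,a)\iso a$ via $\eta$, and symmetrically for $e\otimes a$.
\item The symmetry is $\tha_2(u_{e_2,\tau},\,(a,b))\colon \tha_2(e_2;a,b)\to\tha_2(\tau;a,b)$ followed by the equivariance identification $\tha_2(\tau;a,b)=\tha_2(e_2;b,a)$. Here $\tau$ is the transposition and $u_{e_2,\tau}$ is the unique arrow of $\sP(2)$ from $e_2$ to $\tau$.
\end{enumerate}
Naturality of all three is immediate from naturality of $\Gamma$ and $\eta$ and functoriality of $\tha_2$.

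To verify the axioms, we check each coherence diagram (pentagon, triangle, the two hexagons, $\ga^2=\id$, and the unit–symmetry compatibility) by expanding both composites into $\Gamma$'s, $\eta$'s, and images under the $\tha_j$ of morphisms of $\sP(j)$. We then use the coherence axioms for $\Gamma$ to push every vertex into a single canonical form $\tha_j(\omega;a_1,\dots,a_j)$ with $j\le 4$. Naturality of $\Gamma$ in the $\sP$-variables, combined with equivariance, lets us slide the $\tha(u_{\si,\ta})$ through the $\Gamma$'s. Once every path has been rewritten as $\Gamma^{-1}\circ\tha_j(u)\circ\Gamma$ from a common source to a common target, the two composites agree, because $\sP(j)=\mathcal{E}\SI_j$ has exactly one morphism between any two objects.
\begin{enumerate}[(i)]
\item For the pentagon, both sides reduce to $\tha_4(\id_{e_4})$ conjugated by composition isomorphisms. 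Their equality is then precisely the associativity coherence axiom for $\Gamma$, applied to the two bracketings of four factors.
\item The involution axiom $\ga^2=\id$ reduces to the composite $u_{\tau,e_2}u_{e_2,\tau}=\id$ in $\sP(2)$, after applying equivariance.
\end{enumerate}

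We expect the hexagon to be the main obstacle. Both of its sides involve a symmetry applied inside an iterated product, so we need the compatibility of $\Gamma$ with the action of morphisms of $\sP$ in an inner slot. In the hexagon's first edge, the symmetry acting on the inner factor $a\otimes b$ is $\tha_2(e_2;\tha_2(u_{e_2,\tau},-),c)$, and $\Gamma$ converts it to $\tha_3(\ga(\id_{e_2};u_{e_2,\tau},\id_{e_1}))$. This conversion is exactly naturality of $\Gamma$ with respect to morphisms in the operadic variables. We must also track carefully the equivariance rule $\ga(\mu\si;\nu_1,\dots,\nu_k)=\ga(\mu;\nu_{\si^{-1}(1)},\dots)\si\langle\dots\rangle$ and the compatibility of $\Gamma$ with it. After that bookkeeping, each side becomes $\tha_3(u)$ for some arrow $u\colon e_3\to(\text{cyclic permutation})$ in $\sP(3)$, conjugated by the same $\Gamma$'s, and uniqueness of $u$ closes the argument.
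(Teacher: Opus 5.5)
Your proposal is correct and follows essentially the same route as the paper. The structure maps are the same: the unitors and associator come from the cases $(2;1,0)$, $(2;0,1)$, $(2;2,1)$, $(2;1,2)$ of the composition isomorphism, and the braiding comes from the unique arrow between $e_2$ and $\tau$ in $\sP(2)$ together with equivariance. Coherence is reduced, via the Operadic Composition axiom, naturality and equivariance, to uniqueness of morphisms in the chaotic categories $\sP(j)$.

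The differences are minor. You keep the unit isomorphism $\ph_1$ instead of assuming normality as the paper does. You also handle the triangle and unit--symmetry diagrams by the same reduction, whereas the paper calls the triangle clear and does not check the unit--symmetry triangle. On both points your treatment is if anything more careful.
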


\begin{prop} $\bU$ extends to a $2$-functor  $\PPs \rtarr \SM$. \label{UProp2} 
\end{prop}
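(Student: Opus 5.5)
The plan is to treat the structure that $\bU$ must carry on $1$-cells and $2$-cells as pure restriction, exactly as for objects. On objects, $\bU\sA = (\sA,\otimes,e)$ is a symmetric monoidal category by \autoref{UProp1}.

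\textbf{Action on $1$-cells.} A $1$-cell $\bF\colon \sA\rtarr\sB$ in $\PPs$ is a functor together with coherent natural isomorphisms
\[
\bF_n\colon \tha_n(\al, \bF\be)\Rightarrow \bF\tha_n(\al,\be)
\]
for objects $\al\in\sP(n)$. These are required to be natural in morphisms of $\sP(n)$, equivariant, and compatible with the unit and composition isomorphisms of the pseudoalgebras (\autoref{PPsDefn}).

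\begin{itemize}
\item I define $\bU\bF$ to be $\bF$ as a functor.
\item The product isomorphism $\bF a\otimes \bF b\rtarr \bF(a\otimes b)$ is the component of $\bF_2$ at $e_2$.
\item The unit isomorphism $e\rtarr \bF e$ is the component of $\bF_0$ at the unique object of $\sP(0)=\mathbf{1}$. If pseudomorphisms are normalized so that this is the identity, it is simply the identity.
\end{itemize}

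I then check the three diagrams of \autoref{SMMap}, taking the structure isomorphisms of $\bU\sA$ exactly as constructed in the proof of \autoref{UProp1}:
\begin{itemize}
\item associativity comes from the composition isomorphism $\ga$ for $\ga(e_2;e_2,e_1)$ and $\ga(e_2;e_1,e_2)$, both equal to $e_3$, together with $\tha_3(u,-)$;
\item unitality comes from the unit and composition isomorphisms involving $\sP(0)$;
\item symmetry comes from $\tha_2(\tilde\tau,-)$, where $\tau$ is the transposition.
\end{itemize}

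Each diagram follows from two compatibilities of the $\bF_n$.
\begin{enumerate}
\item Naturality of $\bF_n$ in morphisms $u$ of $\sP(n)$ handles the symmetry square and the $\tha_3(u,-)$ part of the associativity diagram.
\item The axiom relating $\bF$ to the composition isomorphisms of the pseudoalgebras shows that the composites $\bF_2\circ(\bF_2\otimes\id)$ and $\bF_2\circ(\id\otimes \bF_2)$ both agree with $\bF_3$ at $e_3$, after conjugation by the associativity isomorphisms. The hexagon-type diagram in \autoref{SMMap} then reduces to naturality of $\bF_3$ with respect to $u_{e_3,e_3}=\id$, and so commutes.
\end{enumerate}

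\textbf{Action on $2$-cells.} A $2$-cell $\om\colon\bF\Rightarrow\bG$ in $\PPs$ is a natural transformation compatible with the $\bF_n$ and $\bG_n$. I define $\bU\om=\om$. Specializing that compatibility to $n=2$ at $e_2$, and to $n=0$, gives precisely the conditions of \autoref{SMMap2}.

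\textbf{$2$-functoriality.} This is immediate, because on underlying functors and transformations $\bU$ is the identity, and composition of structure isomorphisms is defined in the same way in both $2$-categories:
\begin{itemize}
\item for composites, $(\bG\bF)_2$ at $e_2$ is $\bG(\bF_2)\circ \bG_2$ in both $2$-categories;
\item identities go to identities;
\item vertical and horizontal composition of $2$-cells is literally preserved.
\end{itemize}

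The main obstacle is the associativity check for $\bU\bF$. It requires carefully unwinding how the associator of $\bU\sA$ was built from the pseudoalgebra composition isomorphisms and from the morphism $u$ of $\sP(3)$. One must then verify that the pseudomorphism axiom for $\bF$ with respect to composition transports exactly that composite. I would first write the associator as a single pasting diagram in $\sP$-pseudoalgebra data. I would then apply the composition axiom for $\bF$ twice, once for each bracketing, and the naturality of $\bF_3$ once. This should reduce the diagram to an identity.
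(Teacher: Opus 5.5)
Your proposal is correct and takes essentially the same route as the paper. The structure is read off the same way: $\epz$ and $\mu$ come from $\ze_0$ and from $\ze_2$ at $e_2$; associativity comes from the Preserve Composition axiom in the cases $(2;2,1)$ and $(2;1,2)$, which identify both bracketed composites with $\ze_3$; symmetry comes from naturality of $\ze_2$ in morphisms of $\sP(2)$; and $2$-cells are handled by restricting to $n=2$ and $n=0$.

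Some small remarks:
\begin{itemize}
\item The morphism $u$ of $\sP(3)$ you mention is just the identity, since $\ga(e_2;e_2,e_1)=e_3=\ga(e_2;e_1,e_2)$.
\item The symmetry square also needs equivariance of $\ze_2$, not only naturality. Equivariance is what identifies the component of $\ze_2$ at $(e_2;a,b)$ with its component at $(\tau;b,a)$.
\item Your explicit check that composites of $1$-cells are preserved is a useful addition; the paper leaves it implicit.
\end{itemize}
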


\subsection{The $2$-functor $\bV\colon \SM \rtarr \PPs$}\label{V}

Given a symmetric monoidal category $\sA = (\sA,\otimes, e)$ we define a $\sP$-pseudoalgebra $\bV\sA$ with underlying $\PI$-algebra $\bR\sA$.  For objects $a_i\in \sA$, we define a canonical $n$-fold product  functor
$$\tha(e_n; a_1, \cdots, a_n) \equiv  a_1\otimes \cdots \otimes a_n$$ 
inductively. We start the induction with the given product for $n=2$ and then set
$$\tha(e_n; a_1, \cdots, a_n) =  \tha(e_{n-1}, a_1, \cdots, a_{n-1}) \otimes a_n \equiv a_1 \otimes\cdots \otimes a_n.$$
Writing $\bar{a}=(a_1,\cdots,a_n)$, we define $\tha(\si; \bar{a}) = \tha(e_n; \si\bar{a})$ for $\si\in\SI_n$.  With the appropriate unit and associativity isomorphisms, we obtain the first of the following two results. Discussion of their proofs are deferred to \autoref{Vdetails}. 

\begin{prop}  For each $\sA\in \SM$, $\bV \sA$ is a $\sP$-pseudoalgebra. \label{VProp1}
\end{prop}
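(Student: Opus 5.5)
We build $\bV\sA$ by giving the action functors $\tha_n\colon \sP(n)\times\sA^n\rtarr\sA$ in full, then the unit and composition isomorphisms, then checking the coherence axioms of \autoref{PPsDefn}.

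\emph{Step 1: the functors $\tha_n$.} On objects, set $\tha(\si;\bar a)=\tha(e_n;\si\bar a)$, the left-bracketed product. Set $\tha_0$ to be $e$ and $\tha_1=\Id$. On a morphism $u_{\si,\ta}\colon\si\to\ta$ of $\sP(n)$ we need a map $\tha(e_n;\si\bar a)\rtarr\tha(e_n;\ta\bar a)$. Both sides are left-bracketed words in the same objects, differing by the permutation $\ta\si^{-1}$ in some convention. We take this map to be the unique composite of symmetry and associativity isomorphisms realizing that permutation. By Mac Lane's coherence theorem (\autoref{SMCoh}) this map is independent of the chosen factorization into $\ga$'s and $\al$'s. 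That independence gives functoriality: $\tha(u_{\ta,\rh})\tha(u_{\si,\ta})=\tha(u_{\si,\rh})$ and $\tha(u_{\si,\si})=\id$, since both sides are formal composites of the structure maps with the same underlying permutation. Naturality in $\bar a$ holds because the structure isomorphisms are natural. Equivariance $\tha(\al\si,\bar a)=\tha(\al,\si\bar a)$ holds by construction on objects and, on morphisms, again by uniqueness in coherence.

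\emph{Step 2: the pseudoalgebra isomorphisms.} The unit isomorphism for $\tha(e_1;a)=a$ can be taken to be the identity. The composition isomorphism
\[\tha(\ga(\si;\ta_1,\dots,\ta_k);\bar a)\rtarr \tha(\si;\tha(\ta_1;\bar a_1),\dots,\tha(\ta_k;\bar a_k))\]
compares a left-bracketed word with an iterated word in which some factors are nested products and some are empty (the $j_i=0$ case). We define it as the unique structure map built from $\al$, $\ga$, $\la$ and $\rh$ with the correct underlying permutation and unit insertions. Here we need the strong form of coherence, namely that any two formal diagrams between the same words with the same underlying permutation commute. Units are handled by first deleting the $e$'s with $\la$ and $\rh$.

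\emph{Step 3: coherence axioms.} Each required diagram in \autoref{PPsDefn}, for equivariance, unit, and the associativity pentagon-type axiom for the composition isomorphisms, is a diagram of formal structure isomorphisms between words. In each, the two composites have the same underlying permutation, so each commutes by Mac Lane's theorem. The base object condition, with $\sP(0)$ mapping to $e$, is immediate.

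\emph{Main obstacle.} The main obstacle is making the appeal to coherence honest when units are present. In symmetric monoidal categories the coherence statement is that diagrams of formal maps commute whenever their underlying permutations agree, but diagrams involving repeated or unit objects need care, since words are formal. We will handle this by interpreting everything in the free symmetric monoidal category on the objects $a_i$ regarded as formal variables, which is a preorder on words with fixed underlying permutation. We then map to $\sA$ by the strict monoidal evaluation functor. Every diagram in Steps 1--3 is the image of a diagram in that free category, where it commutes automatically.
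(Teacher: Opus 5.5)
Your proposal is correct and follows essentially the same route as the paper. The action $\theta_n$ is the iterated left-bracketed product on $e_n$ extended by equivariance, $\phi_1$ is the identity, $\phi(k;j_1,\dots,j_k)$ is the canonical coherence isomorphism between nested and flat products, and every required equality of pasting diagrams reduces to Mac Lane's coherence theorem. You are more explicit than the paper about how $\theta_n$ acts on the morphisms $u_{\sigma,\tau}$ of $\sP(n)$ and about why the coherence appeal is legitimate: with distinct formal variables in the free symmetric monoidal category, hom-sets between words using each variable once are singletons. One small point is that your $\gamma$ for the symmetry clashes with the paper's $\gamma$ for the operad structure map; the paper writes $\beta$ for the symmetry.
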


\begin{prop} $\bV$ extends to a $2$-functor  $\SM \rtarr \PPs$. \label{VProp2} 
\end{prop}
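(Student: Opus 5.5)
To prove \autoref{VProp2}, I would define $\bV$ on $1$-cells and $2$-cells and then check the $2$-functor axioms, using Mac Lane's coherence theorem (\autoref{SMCoh}) throughout to manufacture and compare the needed isomorphisms.

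\emph{1-cells.} Let $\bF\colon \sA\rtarr\sB$ be a symmetric monoidal functor with unit isomorphism $\et\colon e\rtarr \bF e$ and product isomorphism $\mu\colon \bF a\otimes \bF b\rtarr \bF(a\otimes b)$. Set $\bV\bF=\bF$ on underlying categories. The required pseudomorphism structure is a family of natural isomorphisms
$$\bF_n\colon \tha_n(e_n;\bF a_1,\ldots,\bF a_n)\rtarr \bF\tha_n(e_n;a_1,\ldots,a_n).$$
I would define these inductively, with $\bF_0=\et$, $\bF_1=\id$, $\bF_2=\mu$, and
$$\bF_n=\mu\com(\bF_{n-1}\otimes\id),$$
which matches the left-bracketed definition of $\tha(e_n;-)$. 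At the objects $\si\in\SI_n$ of $\sP(n)$, equivariance forces $\bF_n(\si;\bar a)=\bF_n(e_n;\si\bar a)$.

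The axioms of \autoref{PPsDefn} for a pseudomorphism then need checking.
\begin{enumerate}[(i)]
\item Naturality in $\sP(n)$: $\bF_n$ must commute with $\tha(\tilde\si)$, which is built from $\ga$, $\al$ and $\la$.
\item Compatibility with the composition isomorphisms of the pseudoalgebras, which are reassociations.
\item Compatibility with the unit isomorphisms.
\end{enumerate}
Each of these is an equality of two composites of $\mu$, $\et$ and the structure isomorphisms of $\sA$ and $\sB$. By the usual coherence theorem for symmetric monoidal functors, every such diagram built formally from $\mu$, $\et$, $\al$, $\la$, $\rh$ and $\ga$ commutes. I would reduce this to Mac Lane's theorem plus induction on $n$, using the hexagon-type axioms of \autoref{SMMap}.

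\emph{2-cells.} A monoidal natural transformation $\om\colon\bF\Rightarrow\bG$ is sent to itself. Its compatibility with $\bF_n$ and $\bG_n$ follows by induction from the case $n=2$, which is \autoref{SMMap2}, together with the unit case.

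\emph{Functoriality.} Strict preservation of identities is immediate. For composites, $(\bG\bF)_n$ should equal $\bG(\bF_n)\com\bG_n$. This holds by induction on $n$, because the product isomorphism of $\bG\bF$ is by definition $\bG\mu_\bF\com\mu_\bG$. Vertical and horizontal composition of $2$-cells are preserved since $\bV$ is the identity on underlying transformations.

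\emph{Main obstacle.} I expect the hardest step to be (i): compatibility of $\bF_n$ with the isomorphisms $\tha(\tilde\si)$ for arbitrary $\si$, since these are only defined via a chosen expression in $\al$ and $\ga$. I would first handle adjacent transpositions and then argue that the general case follows from coherence independently of the chosen word.
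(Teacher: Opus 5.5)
Your proposal is correct and follows the paper's proof. The paper also sets $\ze_0=\epz$, $\ze_1=\id$, $\ze_2=\mu$, extends inductively along the left-bracketed canonical product and then by equivariance, appeals to Mac Lane coherence for symmetric monoidal functors to get the pasting equalities, and sends monoidal transformations to themselves. Two of your steps fill in things the paper leaves implicit: your formula $\bF_n=\mu\com(\bF_{n-1}\otimes\id)$ is the precise form of the paper's displayed $\ze_j$, which omits the precomposition with $\ze_{j-1}\otimes\id$, and you explicitly check naturality in the $\sP(n)$ variable and that $(\bG\bF)_n=\bG(\bF_n)\com\bG_n$.
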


 \subsection{The proof of \autoref{thm1}}\label{Sketch1}
 
 The following is a reformulation of \autoref{thm1}.
 
 \begin{thm}
$\bU$ and $\bV$ are inverse natural isomorphisms of $2$-functors.
\end{thm}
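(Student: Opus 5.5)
The plan is to show directly that $\bU\bV = \Id_{\SM}$ on the nose, and then to build an invertible $2$-natural transformation $\bV\bU \iso \Id_{\PPs}$ whose components are the identity on underlying categories. The introduction promises exactly this (``$\bU\bV$ is the identity on $\SM$'' and ``$\bV\bU$ is isomorphic to the identity''). I read the statement ``inverse natural isomorphisms'' in that sense: mutually inverse up to an invertible $2$-natural transformation that is an identity on one side.

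For $\bU\bV$, start with $\sA=(\sA,\otimes,e)\in\SM$. The product of $\bU\bV\sA$ is $\tha_2(e_2;a,b) = a\otimes b$ by the base case of the inductive definition in \autoref{V}. Its unit is the image of $\sP(0)$, which is $e$. The associativity, unit and symmetry isomorphisms of $\bU\bV\sA$ are extracted in \autoref{UProp1} from the pseudoalgebra composition and unit isomorphisms and from $\tha_2(\tilde\tau)$, $\tau\in\SI_2$. In $\bV\sA$ these were defined, in \autoref{VProp1}, from the given $\alpha,\lambda,\rho,\gamma$ of $\sA$. I would check that in arities $\le 3$ these definitions reduce to exactly the original constraint maps. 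The same check on $1$-cells and $2$-cells shows that $\bU\bV$ is the identity on symmetric monoidal functors, because their structure maps are recovered from the arity-$2$ and arity-$0$ components, and on monoidal transformations.

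For $\bV\bU$, let $\sA\in\PPs$ with action $\tha$. Then $\bV\bU\sA$ has the same underlying category, but its $n$-fold product is the left-bracketed iterate $(\cdots(a_1\otimes a_2)\otimes\cdots)\otimes a_n$, where $\otimes=\tha(e_2;-)$. I would define $\eta_n\colon \tha^{\bV\bU}(e_n;-)\Rightarrow \tha(e_n;-)$ by induction on $n$. The isomorphism $\eta_2$ is the identity. The inductive step is
\[\eta_n = \ga\circ(\eta_{n-1}\otimes\id),\]
where $\ga$ is the pseudoalgebra composition isomorphism $\tha(e_2;\tha(e_{n-1};-),-)\iso\tha(\ga(e_2;e_{n-1},e_1);-) = \tha(e_n;-)$. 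I extend $\eta_n$ to all objects $\si$ of $\sP(n)$ by equivariance. Then $(\Id,\eta)$ is a $1$-cell $\bV\bU\sA\rtarr\sA$ in $\PPs$ whose underlying functor is the identity, and it is invertible. Its naturality in $1$-cells and $2$-cells of $\PPs$ follows from the compatibility of pseudomorphisms with the composition isomorphisms.

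The main obstacle is verifying that $(\Id,\eta)$ respects all the structure isomorphisms of the two pseudoalgebras: the composition isomorphisms for every $\ga(\si;\ta_1,\dots,\ta_k)$, the unit isomorphisms, and the action of $\tha$ on every morphism $u_{\si,\ta}$ of $\sP(n)$. I would reduce this by equivariance to the case $\ta=e_n$. Both sides are then composites of instances of $\ga$-isomorphisms and $\tha_2(\tilde\tau)$, so they are formal diagrams built from the pseudoalgebra axioms. On the $\bV\bU$ side they are built from associators and symmetries alone. At this point I invoke Mac Lane's coherence theorem (\autoref{SMCoh}) to conclude that any two such parallel formal composites agree, applied in the free symmetric monoidal setting. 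This transports to $\sA$ because $\bU\sA$ is symmetric monoidal by \autoref{UProp1} and $\eta$ is itself a composite of its structure maps.
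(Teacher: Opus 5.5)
You define $\bU\bV=\Id$ exactly as the paper does. Your inductive $\eta_n=\varphi^{\sA}(2;n-1,1)\circ(\eta_{n-1}\otimes\id)$, extended by equivariance, is the paper's $\ze_k$.

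\textbf{The gap.} The gap is in your last step, checking that $(\Id,\eta)$ is a $\sP$-pseudomorphism. \autoref{SMCoh} applied to $\bU\sA$ only controls composites of $\al,\la,\rh,\be$ between binary bracketings. The conditions you must check involve other maps:
\begin{itemize}
\item Preserve Composition at $(k;j_1,\dots,j_k)$ involves $\varphi^{\sA}(k;j_1,\dots,j_k)$ and ends in the unbiased product $\tha_{j_+}(e_{j_+};-)$.
\item Naturality of $\eta_n$ in the $\sP(n)$-variable involves $\tha_n(u_{\si,\ta};-)$.
\end{itemize}
These maps live among unbiased products $\tha_n$ with $n\geq 3$ and are not constraints of $\bU\sA$. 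Only the particular combinations of $\varphi^{\sA}(2;\dots)$ and $\tha_2$ that define $\al,\la,\rh,\be$ enter $\bU\sA$.

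Your justification that $\eta$ is a composite of the structure maps of $\bU\sA$ is also false. $\eta_n$ is built from maps $\varphi^{\sA}(2;m,1)$ with target $\tha_n(e_n;-)$, which is not a binary word. So the transport is exactly what has to be proved, and Mac Lane's theorem for $\bU\sA$ cannot prove it.

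For instance, in the normal case the Preserve Composition condition at $(3;2,1,1)$ unwinds to
\[
\varphi^{\sA}(3;2,1,1)\circ\varphi^{\sA}(2;2,1)_{(a\otimes b,c,d)}=\varphi^{\sA}(2;3,1)\circ\bigl(\varphi^{\sA}(2;2,1)_{(a,b,c)}\otimes\id_d\bigr).
\]
This is precisely an instance of the Operadic Composition axiom of $\sA$ itself, with outer $(2;2,1)$. It is invisible to $\bU\sA$.

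\textbf{What is missing.} You need an input that uses the pseudoalgebra axioms of $\sA$ in all arities. The paper gets it from coherence in its operadic/monadic form \cite{Lack}. Every constraint of $\bU\sA$, hence every Mac Lane canonical map and every $\eta_n$, is a composite of the $\varphi^{\sA}$'s and of $\tha_2$ on morphisms. So both sides of each required equation are parallel composites of structure isomorphisms of the $\sP$-pseudoalgebra $\sA$, and these agree by that coherence.

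Alternatively, you can prove by induction that the $\eta$-conjugates of every $\varphi^{\sA}(k;j)$ and every $\tha_n(u_{\si,\ta})$ are formal composites of $\al,\la,\rh,\be$ in $\bU\sA$. The induction would use Operadic Composition, Operadic Identity, and naturality of $\varphi^{\sA}$ in the $\sP$-variables. Only after that reduction does \autoref{SMCoh} finish the argument.
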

\begin{proof}
    Let $(\sA, \otimes, e)$ be a symmetric monoidal category. According to our definitions, it is clear that the underlying collections of objects and morphisms in $\sA$ and $\bU\bV \sA$ are the same. Also, via inspection the product, associators, unit, and commutativity isomorphisms in $\sA$ and $\bU\bV \sA$ are the same. Therefore, $\sA$ and $\bU\bV \sA$ are the same as symmetric monoidal categories and $\bU\bV  = \mathbf{\Id}$.
    
    Thus it suffices to construct a natural isomorphism $\ze\colon  \bV\bU\rtarr  \mathbf{\Id}$.  Here by a natural isomorphism we mean a pseudoisomorphism, as in \autoref{pseudomap}.  Let $\sA$ be a $\sP$-pseudoalgebra. Again,  the underlying category of  $\bV\bU \sA$ is $\sA$, so our equivalence starts with this identification of underlying categories.  Starting with the identity for $k=2$, we shall inductively define natural transformations $\zeta_k$ for $k\geq 2$ that make the following diagram commute and satisfy the other properties required of a $\sP$-pseudomorphism, as defined in \autoref{pseudomap}. 

\[\begin{tikzcd}
	{\sP(k)\times \sA^k} & {\sP(k)\times (\bV\bU \sA)^k} \\
	\sA & {\bV\bU \sA}
	\arrow["{\theta_k}"', from=1-1, to=2-1]
	\arrow["{=}"', from=1-2, to=1-1]
	\arrow["{\theta_k}", from=1-2, to=2-2]
	\arrow[""{name=0, anchor=center, inner sep=0}, "{=}", from=2-2, to=2-1]
	\arrow["{\zeta_k}"', shorten <=6pt, shorten >=6pt, Rightarrow, from=1-2, to=2-1]
\end{tikzcd}\]

Since both maps $\tha_k$ are (strictly) equivariant, a simple diagram chase shows that it suffices to define $\ze_k$ on objects 
$e_k\times \be$ for $k$-tuples $\be$ in $\sA$. Here and below, we do not distinguish notationally between objects and morphism in $\sA$ or in $\bV\bU \sA$.   On the left, $\tha_k$ is then the given canonical $k$-fold product.  On the  right, $\tha_k$ is our inductively defined canonical product.  The definition of a $\sP$-pseudoalgebra, \autoref{PPsDefn}, requires a natural isomorphism between these, and this is obtained inductively via the diagram
\[\begin{tikzcd}
	{\sP(2)\times (\sP(k-1)\times \sA^{k-1}\times \sP(1)\times \sA)} & & {\sP(2)\times \sA^2} \\
	{\sP(k)\times \sA^k} & & \sA
	\arrow[""{name=0, anchor=center, inner sep=0}, "{\id\times (\theta_{k-1}\times \theta_2)}", from=1-1, to=1-3]
	\arrow["{\text{Shuffle}}"', from=1-1, to=2-1]
	\arrow["{\theta_2}", from=1-3, to=2-3]
	\arrow[""{name=1, anchor=center, inner sep=0}, "{\theta_k}"', from=2-1, to=2-3]
	\arrow["{\varphi}", shorten <=6pt, shorten >=6pt, Rightarrow, from=1-3, to=2-1]
\end{tikzcd}\]
Since $\tha_2(e, a_1, a_2) = a_1\otimes a_2$ on objects $a_i$ of $\sA$, this gives an isomorphism.

In particular, when the object in the top left corner is of the form
$$e_2\times (e_{k-1}\times (a_1,...,a_{k-1})\times e_1\times a_k),$$
$\ph$ gives a natural isomorphism
$$  \tha_{k-1}(e_{k-1};a_1, \cdots, a_{k-1})\otimes a_k \Rightarrow \tha_k(e_k;a_1,\cdots, a_k)  $$
By the induction hypothesis, we have an iterated analogous isomorphism from the inductively defined canonical product 
$a_1\otimes \cdots \otimes a_{k-1}$ to the given canonical product $\tha_{k-1}(e_{k-1};a_1, \cdots, a_{k-1})$.  Taking the product  with $a_k$, and composing, this gives the desired natural isomorphism from the iteratively defined $k$-fold caononical product in $\bV\bU \sA$ to the given canonical $k$-fold product in $\sA$.  As said, the $\ze_k$ extend from $e_k$ to all $\si\in \SI_k$ by equivariance. 

To have a pseudomorphism, we must have equalities of pasting diagrams, but this is what MacLane's coherence theorem \cite[Section XI.1]{Mac} tells us, as is perhaps most clearly seen in its 
operadic (or monadic) version \cite[Theorem 3.2]{Lack}.  The pasting diagrams 
required by \autoref{PPsDefn} factor into pasting diagrams of the $\varphi$'s, and these are equal due to the coherence theorem.   That is, the identity of $\sA$ extends to a pseudomorphism $\bV\bU \rtarr \Id$, necessarily an isomorphism.  Compatibility with the $1$-cells and $2$-cells of our $2$-categories are similar direct comparisons and are left to the reader.  The verification is just checking that the pasting diagrams in the definition of a $\sP$-pseudoalgebra are complete.
 \end{proof}

\section{Diagrammatic definitions and categorical combinatorics}\label{Sec4}
\subsection{$2$-Categories, $2$-functors, and pseudofunctors}\label{2Cats}
While $2$-categories and $2$-functors should be familiar at this late stage, we briefly recall what they are.  The prime example is the $2$-category $\mathbf{Cat}$  whose objects (or $0$-cells) are categories, whose morphisms (or $1$-cells) are functors, and whose $2$-morphisms (or $2$-cells) are natural transformations.    
\begin{defn} A $2$-category is a category enriched in the category $\mathbf{Cat}$ and a $2$-functor is an enriched functor between $2$-categories.
\end{defn}
Thus a $2$-category $\sC$ has a set of objects ($0$-cells) and for each pair $(x,y)$ of objects a category of morphisms $\sC(x,y)$ (whose objects are called morphisms or $1$-cells) and whose morphisms are called $2$-morphisms or $2$-cells), together with unit functors $\id_x$ from the trivial $1$-object category $\ast$ to each $\sC(x,x)$ and composition functors $\odot \colon \sC(y,z)\times \sC(x,y) \rtarr \sC(x,z)$ for each triple $(x,y,z)$  such that composition is (strictly) unital and associative.   Of course, the (horizontal) composition $\odot$ is not to be confused with the (vertical) composition $\com$ in each category $\sC(x,y)$.  A $2$-functor 
$\bF\colon \sC\rtarr \sD$ is a function on objects and functors $\bF\colon \sC(x,y)\rtarr \sD(\bF x, \bF y)$ for each pair of objects 
$(x,y)$ that are unital and preserve composition, meaning  that the following diagrams of functors commute.

\begin{equation}\label{2fun}
\xymatrix{
\ast\ar[r]^-{\id_x} \ar[dr]_{\id_{\bF x}} & \sC(x,x) \ar[d]^{\bF}  
& & \sC(y,z) \times \sC(x,y) \ar[d]_{\bF\times\bF} \ar[r]^-{\odot} & \sC(x,z) \ar[d]^{\bF}\\
& \sD(\bF x,\bF x) & & \sD(\bF y, \bF z) \times \sD(\bF x, \bF y) \ar[r]_-{\odot} & \sD(\bF x, \bF z)\\} 
\end{equation}

We recall the standard $2$-categorical generalization of functors and natural transformations.
For more details,and a full enumeration of coherence conditions, see \cite[Sections 4.1, 4.2]{JY}.

\begin{defn}\label{PsFun}
    A \textit{pseudofunctor} $\bF\colon \sC \rtarr \sD$, where $\sC$ and $\sD$ are categories, consists of 
   a function  $\bF \colon \textbf{Ob}(\sC) \rtarr \textbf{Ob}(\sD)$, functions 
   $\bF \colon \sC(x,y) \rtarr \sD(\bF x,\bF y)$ for objects $x$ and $y$ of $\sC$, and
    natural identity and composition isomorphisms in the following two diagrams.
    
\[\begin{tikzcd}
 {\ast} & {\sC(x,x)} &  &  {\sC(y,z)\times\sC(x,y)} & {\sC(x,z)} \\
& {\sD(\bF x, \bF x)} & &     {\sD(\bF y,\bF z)\times \sD(\bF x,\bF y)} & {\sD(\bF x,\bF z)}
    \arrow["\id_x", from=1-1, to=1-2]
    \arrow["\id_{\bF x}"', from=1-1, to= 2-2]
    \arrow["\bF", from=1-2, to=2-2]
%    \arrow["{\phi}"{pos=0.4}, shift right, Rightarrow, from=1-2, to=0]
    \arrow["\circ", from=1-4, to=1-5]
    \arrow["{\bF\times \bF}"', from=1-4, to=2-4]
    \arrow["\bF", from=1-5, to=2-5]
    \arrow["\circ"', from=2-4, to=2-5]
    \arrow[shorten <=6pt, shorten >=6pt, Rightarrow, from=1-4, to=2-5]
    \end{tikzcd}\]
Similarly, for \textit{pseudotransformations},  instead of requiring that the usual square of a natural transformation commutes, we only require that it commutes up to coherent natural isomorphism.
\end{defn}

\begin{rem}[Normality]\label{norm1}  Throughout, there is a distinction to be made between unit or identity {\em{objects}} and identity {\em{operations}}.  We can ask for either to be either strict or up to isomorphism.  Symmetric monoidal categories have unit objects up to isomorphism, but there is no mention of an identity operation in the definition: it is tacitly just assumed to be the identity.  We say that a pseudofunctor is {\em{normal}} if the identity natural transformation is the identity, so that 
$\bF  \id_x = \id_{\bF x}$ for all $x$.  In the case of pseudofunctors $\sF \rtarr \mathbf{Cat}$, this is equivalent to saying that $\ph_1$ is the identity isomorphism in the first diagram of \autoref{FPRdiag}.
An analogous normal variant of $\sP$-pseudoalgebras is defined in \autoref{norm2}. 
Operadically, \autoref{eunit} shows that the unit object and unit operation are very closely related.
\end{rem}

\begin{rem}\label{mod} Since the $0$-cells and $1$-cells of $\FPR$ are suitable pseudofunctors and pseudotransformations, the $2$-cells might logically  be called pseudomodifications, but we shall call them $\sF$-$2$-cells.  In general, we view pseudomorphisms $\sA\colon \sC \rtarr \mathbf{Cat}$ in adjoint form,  so that they are given by functors $\sC(c,d) \times \sA(c) \rtarr \sA(d)$ for objects $c$ and $d$ of $\sC$.  In this form, a modification between pseudofunctors $\sA \rtarr sB$ is given by  pseudomorphisms  $\la_{\sA}\colon \sC(c,d)\times \sA(c) \rtarr \sA(d)$ and $\la_{\sB}$ that give equalities of suitable pasting diagrams.  We shall make this more explicit in our context in \autoref{2cell2}.
\end{rem}

\subsection{Details about $\SM$}\label{SMdetails}

\begin{defn}[Symmetric Monoidal Category]\label{SMObj}
A symmetric monoidal category $\sA = (\sA, \otimes, e)$ consists of a category $\sA$, a product functor $\otimes$, and a unit object $e$, together with natural isomorphisms
$$ \al = \al_{x,y,z}: (x\otimes y)\otimes z\rightarrow x\otimes (y\otimes z),$$
$$  \la = \lambda_x: e\otimes x\rightarrow x \ \ \text{and} \ \  \rh = \rho_x: x\otimes e\rightarrow x,$$
and
$$\be =\be_{x,y}: x\otimes y\rightarrow y\otimes x$$
\
such  that the following diagrams commute

\begin{enumerate}[(i)]
    \item \textbf{(Pentagon)}
\[\begin{tikzcd}
	& {(w\otimes x)\otimes (y\otimes z)} \\
	{((w\otimes x)\otimes y)\otimes z } && {w\otimes (x\otimes (y\otimes z))} \\
	\\
	{(w\otimes (x\otimes y))\otimes z} && {w\otimes ((x\otimes y)\otimes z)}
	\arrow["{\al_{w\otimes x,y,z}}"', from=1-2, to=2-1]
	\arrow["{\al_{w,x,y\otimes z}}", from=1-2, to=2-3]
	\arrow["{\al_{w,x,y}\otimes 1_z}"', from=2-1, to=4-1]
	\arrow["{\al_{w, x\otimes y, z}}"', from=4-1, to=4-3]
	\arrow["{1_w\otimes \al_{x,y,z}}"', from=4-3, to=2-3]
\end{tikzcd}\]
\item \textbf{(Triangle)} 
\[\begin{tikzcd}
	{(x\otimes e)\otimes y} && {x\otimes (e\otimes y)} \\
	& {x\otimes y}
	\arrow["{\al_{x,e,y}}", from=1-1, to=1-3]
	\arrow["{\rho_x\otimes 1_y}"', from=1-1, to=2-2]
	\arrow["{1_x\otimes \lambda_y}", from=1-3, to=2-2]
\end{tikzcd}\]
\item \textbf{(Hexagon)} 
\[\begin{tikzcd}
	{(x\otimes y)\otimes z} & {x\otimes (y\otimes z)} & {(y\otimes z)\otimes x} \\
	{(y\otimes x)\otimes z} & {y\otimes (x\otimes z)} & {y\otimes (z\otimes x)}
	\arrow["{a_{x,y,z}}", from=1-1, to=1-2]
	\arrow["{\beta_{x,y}\otimes 1_z}"', from=1-1, to=2-1]
	\arrow["{\beta_{x, y\otimes z}}", from=1-2, to=1-3]
	\arrow["{\al_{y,z,x}}", from=1-3, to=2-3]
	\arrow["{\al_{y,x,z}}"', from=2-1, to=2-2]
	\arrow["{1_y\otimes \be_{z,x}}"', from=2-2, to=2-3]
\end{tikzcd}\]
\item \textbf{(Symmetry)}   \xymatrix{
x\otimes e \ar[r]^-{\be_{x,e}} \ar[dr]_{\rh} & e\otimes x \ar[d]^{\la}  
&  x\otimes y \ar[r]^-{\be_{x,y}}  \ar[dr]_{=} & y\otimes x \ar[d]^{\be_{y,x}}\\
& x   & & x\otimes y \\}
\end{enumerate}
\end{defn}

Up to notation, the following theorem is quoted from \cite{Mac}.  Its intuition is that ``all diagrams commute'' in a symmetric monoidal category. 

\begin{thm}[MacLane Coherence Theorem]\label{SMCoh}
    Let $\sA$ be a symmetric monoidal category. Given a permutation $\sigma$ on $n$ letters, define the corresponding functor $\sigma_{\sA}\colon \sA^n\rightarrow\sA$ on a tuple $(a_1,...,a _n)$ by tensor products of the tuple after permutation by $\sigma$. Then there exists a function that assigns to every pair of functors $\sigma_\sA , \tau_\sA : \sA ^n\rightarrow\sA $ a unique natural isomorphism
\[
\mathrm{can}_{\sA }(\sigma, \tau) : \sigma_{\sA } \longrightarrow \tau_{\sA },
\]
called the \emph{canonical map} from $\sigma_\sA $ to $\tau_\sA $, such that
\begin{enumerate}[(i)]
    \item the identity map of $\sA $ and the associativity and symmetry isomorphisms $\alpha, \be$ are canonical;
    \item composition and tensor product preserve canonical maps.
\end{enumerate}
\end{thm}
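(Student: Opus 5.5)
The plan is to reduce the statement to a computation in the free symmetric monoidal category on $n$ generators, and then to transport the result to $\sA$ by the universal property.

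\textbf{Step 1: the free object and the comparison.} Fix the set $X=\{x_1,\dots,x_n\}$ of $n$ distinct letters. Form the free symmetric monoidal category $\mathbf{F}(X)$:
\begin{enumerate}[(a)]
\item its objects are the formal bracketed $\otimes$-words in the letters and $e$;
\item its morphisms are formal composites and tensor products of instances of $\al,\la,\rh,\be$, their inverses and identities;
\item these are taken modulo the relations forced by functoriality, naturality and the diagrams (i)--(iv) of \autoref{SMObj}.
\end{enumerate}
By construction, an $n$-tuple $(a_1,\dots,a_n)$ of objects of $\sA$ determines a strict symmetric monoidal functor $\mathbf{F}(X)\rtarr \sA$ with $x_i\mapsto a_i$. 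This functor is natural in the tuple, so it gives a functor $\mathbf{F}(X)\times \sA^n \rtarr \sA$. The word $w_\si=x_{\si(1)}\otimes\cdots\otimes x_{\si(n)}$, with the fixed bracketing used in the statement, is sent to $\si_\sA$.

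We then \emph{define} $\mathrm{can}_\sA(\si,\ta)$ as the image of a morphism $w_\si\rtarr w_\ta$ in $\mathbf{F}(X)$. This definition is legitimate once we prove the key claim: between any two words in $\mathbf{F}(X)$ in which each $x_i$ occurs exactly once, there is exactly one morphism. Given the claim, properties (i) and (ii) are immediate, because the functor $\mathbf{F}(X)\rtarr\sA$ is strict monoidal and sends the formal $\al$, $\be$ and identities to the actual ones. Uniqueness of the assignment also follows. Every morphism $w_\si\rtarr w_\ta$ of $\mathbf{F}(X)$ is a composite of tensor products of instances of $\al,\be$ (and of $\la,\rh$, which never occur between $e$-free words in an essential way). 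Hence any assignment satisfying (i) and (ii) must agree with ours.

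\textbf{Step 2: proof of the key claim.} This is the main obstacle, and I would split it into two parts.

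\emph{Existence} is easy. One can rebracket using $\al$, and adjacent transpositions are realized by $1\otimes\be\otimes 1$ composed with associators. Since adjacent transpositions generate $\SI_n$, there is a morphism between any two such words.

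\emph{Uniqueness} needs more work. Consider the ``underlying permutation'' map, which sends a morphism of $\mathbf{F}(X)$ to the element of $\SI_n$ by which it permutes the letters. I would show this map is injective on morphisms between a fixed pair of distinct-letter words, in three stages.
\begin{enumerate}[(a)]
\item First use the monoidal (non-symmetric) coherence theorem. Its standard proof rewrites every word to right-normal form by directed $\al$-moves and uses the pentagon and triangle to show that the resulting rewriting system is confluent. This shows that the subcategory generated by $\al,\la,\rh$ is thin.
\item Transporting along these unique associators, reduce to morphisms between right-normalized words. Such a morphism is then a composite of the adjacent-swap maps $s_i=1\otimes\be\otimes1$.
\item Check that these $s_i$ satisfy the Coxeter relations of $\SI_n$:
\begin{enumerate}[(1)]
\item $s_i^2=1$ is the symmetry axiom (iv);
\item $s_is_j=s_js_i$ for $|i-j|>1$ follows from functoriality of $\otimes$;
\item the braid relation $s_is_{i+1}s_i=s_{i+1}s_is_{i+1}$ follows from the hexagon (iii) together with naturality of $\be$.
\end{enumerate}
Since the Coxeter relations present $\SI_n$, two composites of the $s_i$ with the same underlying permutation are equal.
\end{enumerate}
Because the letters are distinct, the underlying permutation of a morphism $w_\si\rtarr w_\ta$ is forced to be $\ta^{-1}\si$ (or its inverse, depending on conventions). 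This gives uniqueness.

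\textbf{Where the care goes.} The bookkeeping in stage (c), particularly deriving the braid relation from a single hexagon and naturality, is where I expect the real effort to lie. The rest is formal.
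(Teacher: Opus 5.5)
The paper gives no proof of this statement; it quotes it from Mac Lane \cite{Mac}. Your outline follows the classical argument: evaluate the free symmetric monoidal category on distinct letters, use monoidal coherence, then use the Coxeter presentation of $\SI_n$, with the braid relation coming from the hexagon and naturality of $\beta$. Step~1 and stage~(c) are sound. One small point: $\alpha$ and tensor products of canonical maps change bracketings, so the statement has to be read with Mac Lane's permuted words of arbitrary bracketing. Your key claim is already phrased for all words, so you should drop ``with the fixed bracketing''. The real gap is stage~(b), which you treat as formal.

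In stage~(b) you assert the following. Conjugate a morphism of $\mathbf{F}(X)$ by the structural isomorphisms $\kappa_w\colon w\to N(w)$ onto right-normalized $e$-free words. Then the result is a composite of the letter-level swaps $s_i$. This is not automatic.

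A general morphism is a composite of elementary steps $C[\gamma]$. Here $C$ is a bracketed context, and $\gamma$ is an instance either of $\alpha^{\pm1}$, $\lambda^{\pm1}$, $\rho^{\pm1}$, or of $\beta_{u,v}$ with $u,v$ \emph{arbitrary} subwords. Such subwords may be compound and may contain $e$; intermediate words can contain $e$ even when the endpoints do not. Monoidal coherence disposes of the first kind. But $\kappa\, C[\beta_{u,v}]\,\kappa^{-1}$ is a block transposition, and showing it is a composite of $s_i$'s is exactly where the axioms enter:

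\begin{enumerate}[(1)]
\item Naturality of $\alpha,\lambda,\rho$, together with thinness of the $\alpha\lambda\rho$-part, lets you strip off the context $C$.
\item You then need an induction on $u$ and $v$. The hexagon~(iii) splits $\beta_{u,v'\otimes v''}$.
\item The relation $\beta_{v,u}=\beta_{u,v}^{-1}$ from~(iv) then splits the first variable.
\item The unit half of~(iv), $\lambda_u\circ\beta_{u,e}=\rho_u$, turns symmetries involving $e$ into unit constraints.
\end{enumerate}

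Your remark that $\lambda,\rho$ ``never occur in an essential way'' is precisely the last step. It needs that axiom and does not follow from monoidal coherence. So the hexagon is used twice, not only for the braid relation, and this splitting lemma is the real content of the reduction.

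With the lemma in place, stage~(c) finishes the proof. In (c)(2) you also need naturality of $\alpha$, not just functoriality of $\otimes$, because the $s_i$ on right-normal words involve associators.
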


%\begin{comment}
%\begin{thm}[Mac Lane Coherence Theorem]
%    In every symmetric monoidal category $\sA $, there exists a function that assigns to each pair $(v\sigma, w\tau)$ of permuted words of the same length $n$ a unique natural isomorphism
%\[\mathrm{can}_{\sA }(v\sigma, w\tau) : (v\sigma)_{\sA } \longrightarrow (w\tau)_{\sA } : \sA ^n \rtarr \sA , \]
%called the \emph{canonical map} from $v\sigma$ to $w\tau$. This assignment is made so that:
%\begin{enumerate}[(i)]
%    \item the identity map of $\sA $ and all instances of the associativity $\alpha$ and symmetry $\gamma$ isomorphisms are canonical;
%    \item the composition of canonical maps is canonical; and
%    \item the tensor product $\otimes$ of canonical maps is canonical.\end{enumerate}
%\end{thm} \end{comment}

%In the paper, we often refer to this theorem as ``the coherence theorem''.

\begin{defn}[Symmetric Monoidal Functor]\label{SMMap}
 Let $\sA$ and $\sB$ be symmetric monoidal categories. A symmetric monoidal functor between them is a functor $\bF\colon \sA\rtarr \sB$ together with coherence isomorphisms 
 $$\epz: e_{\sB} \rtarr \bF(e_{\sA}) \ \ \text \ \ \mu = \mu_{a_1,a_2}: \bF(a_1)\otimes \bF(a_2)\rtarr \bF(a_1\otimes a_2)$$
  for all $a_1,a_2\in \sA$  such that the following diagrams commute:
 \begin{enumerate}[(i)]
        \item \textbf{(Associativity)}
\[\begin{tikzcd}
	{(\bF(a_1) \otimes \bF(a_2)) \otimes \bF(a_3)} & {\bF(a_1) \otimes (\bF(a_2) \otimes \bF(a_3))} \\
	{\bF(a_1 \otimes a_2) \otimes \bF(a_3)} & {\bF(a_1) \otimes \bF(a_2 \otimes a_3)} \\
	{\bF((a_1 \otimes a_2) \otimes a_3)} & {\bF(a_1 \otimes (a_2 \otimes a_3))}
	\arrow["{\alpha^{\sB }}", from=1-1, to=1-2]
	\arrow["{\mu_{a_1,a_2} \otimes \mathrm{id}}"', from=1-1, to=2-1]
	\arrow["{\mathrm{id} \otimes \mu_{a_2,a_3}}", from=1-2, to=2-2]
	\arrow["{\mu_{a_1 \otimes a_2, a_3}}"', from=2-1, to=3-1]
	\arrow["{\mu_{a_1, a_2 \otimes a_3}}", from=2-2, to=3-2]
	\arrow["{\bF(\alpha^{\sA})}"', from=3-1, to=3-2]
\end{tikzcd}\]
\item \textbf{(Unitality)}   
\[\begin{tikzcd}
{\bF(a_1) \otimes e_{\sB }} & {\bF(a_1)} & {e_{\sB } \otimes \bF(a_1)} & {F(a_1)}\\
{\bF(a_1) \otimes \bF(e_{\sA})} & {\bF(a_1 \otimes e_{\sA})} & 	{\bF(e_{\sA}) \otimes \bF(a_1)} & {\bF(e_{\sA} \otimes a_1)}
	\arrow["{\rh^{\sB}}", from=1-1, to=1-2]
	\arrow["{\mathrm{id} \otimes \epz}"', from=1-1, to=2-1]
	\arrow["{\mu_{a_1, e_{\sA}}}"', from=2-1, to=2-2]
	\arrow["{\bF(\rh^{\sA})}"', from=2-2, to=1-2]
	\arrow["{\la^{\sB}}", from=1-3, to=1-4]
	\arrow["{\epz \otimes \mathrm{id}}"', from=1-3, to=2-3]
	\arrow["{\mu_{e_{\sA}, a_1}}"', from=2-3, to=2-4]
	\arrow["{\bF(\la^{\sA})}"', from=2-4, to=1-4]
\end{tikzcd}\]

\item \textbf{(Symmetry)}. 
\[\begin{tikzcd}
	{\bF(a_1) \otimes \bF(a_2)} & {\bF(a_2) \otimes \bF(a_1)} \\
	{\bF(a_1 \otimes a_2)} & {\bF(a_2 \otimes a_1)}
	\arrow["{\be^{\sB}}", from=1-1, to=1-2]
	\arrow["{\mu_{a_1, a_2}}"', from=1-1, to=2-1]
	\arrow["{\mu_{a_2, a_1}}", from=1-2, to=2-2]
	\arrow["{\bF(\be^{\sA})}"', from=2-1, to=2-2]
\end{tikzcd}\]    \end{enumerate}
\end{defn}

\begin{rem}
The  previous definition coincides with what is often referred to as a \textit{strong} symmetric monoidal functor, where being strong requires that the coherence maps be invertible. 
\end{rem}

\begin{defn}[Natural Transformations]\label{SMMap2}
Let $\sA$ and $\sB$ be symmetric monoidal categories with symmetric monoidal functors $\bF, \bG\colon \sA \rtarr \sB$. A natural transformation $\xi\colon \bF \rtarr \bG$ is symmetric monoidal if the following diagrams commute.
$$\xymatrix{
e_{\sB} \ar[r]^-{\epz^{\bF}} \ar[dr]_{e^{\bG}} & \bF(e_{\sA}) \ar[d]^{\xi}  
& \bF(a_1) \otimes \bF(a_2) \ar[r]^-{\mu^{\bF}} \ar[d]_{\xi\otimes \xi} & \bF(a_1\otimes a_2) \ar[d]^{\xi}\\
& \bG(e_{\sA}) 
& \bG(a_1) \otimes \bG(a_2) \ar[r]_-{\mu^{\bG}}  & \bG(a_1\otimes a_2)\\}
$$
\end{defn}

We denote by $\SM$ the $2$-category of symmetric monoidal categories, symmetric monoidal functors, and their natural transformations. 

%We will work with the category of $\sP$-pseudoalgebras, $\sP\textbf{-PsAlg}$, with morphisms being $\sP$-pseudomorphisms, and the category of symmetric monoidal categories, $\textbf{SymmMon}$, with morphisms being symmetric monoidal functors.

\subsection{Details about $\PPs$}\label{PPsdetails}

The objects of the category $\sP(j)$ are the permutations in $\SI_j$.  Let $\sigma\in \sP(k)$, $\tau_i\in \sP(j_i)$ and $j_+ = \sum^k_{i=1} j_i$. Then the operadic structure map 
$$\gamma: \sP(k)\times\sP(j_1)\times...\times\sP(j_k)\rightarrow\sP(j_+)$$ 
is defined on objects by
$$\gamma(\sigma; \tau_1,...,\tau_n) = (\times^k_{i=1} \tau_{\sigma^{-1}(i)})\circ \sigma(j_1,...,j_k),$$ 
where $\sigma(j_1,...,j_k)$ is the permutation corresponding to $\sigma$ that permutes the $k$ blocks of letters in $j_+$, with the $i$-th block containing $j_i$ letters. Both $\sP(0)$ and $\sP(1)$ are copies of the trivial category.  We think of $\sP(0)$ as giving a base object $\ast$ and $\sP(1)$ as giving an identity operation $\io$.  We think of $\io$ as the identification 
$\ast \iso \sP(1)$.

We mostly follow Corner and Gurski \cite{CG} in defining $\sP$-pseudoalgebras.  We are motivated by comparison with the much older notion of a pseudoalgebra over a $2$-monad, as in \cite{Power}, for example.  The only differing details concern  the identity operation, which is more enmeshed with composition operadically than monadically. 

\begin{defn}[$\sP$-pseudoalgebra]\label{PPsDefn}
A \emph{$\sP$-pseudoalgebra} is a category $\sA$  and operad action functors 
    \[
      \theta_j:\ \sP(j)\times \sA^j \longrightarrow\ \sA
    \]
for each $j\in\mathbb N$, together with an invertible natural transformation $\ph_1$ in the diagram
\begin{equation}\label{phione}
\begin{tikzcd}
	{\sA} & {\sP(1)\times \sA} \\
	& \sA \\
	\arrow["{\io}", from=1-1, to=1-2]
	\arrow[""{name=0, anchor=center, inner sep=0}, "\id"', from=1-1, to=2-2]
	\arrow["{\theta_1}", from=1-2, to=2-2]
	\arrow["{\phi_1}"{pos=0.4}, shift right, shorten <=3pt, shorten >=3pt, Rightarrow, from=1-2, to=0]
\end{tikzcd}
\end{equation}
in which  $\io$ is the identification $\sA \iso \ast\times \sA \iso \sP(1)\times \sA$
and an invertible  natural transformation $\phi = \phi(k;j_1,...,j_k)$ for each diagram 
\begin{equation}\label{phikey}
\begin{tikzcd}
	{\sP(k)\times (\prod_i\sP(j_i)\times \sA^{j_i})} & {\sP(k)\times \sA^k} \\
	& \sA \\
	{\sP(k)\times \prod_i \sP(j_i)\times \sA^{j_+}} & {\sP(j_+)\times \sA^{j_+}}
	\arrow["\id\times \prod_i\tha_{j_i}", from=1-1, to=1-2]
	\arrow["{\text{shuffle}}"', from=1-1, to=3-1]
	\arrow["\tha_k", from=1-2, to=2-2]
	\arrow["{\phi}", shorten <=9pt, shorten >=9pt, Rightarrow, from=1-2, to=3-1]
	\arrow["\ga\times \id"', from=3-1, to=3-2]
	\arrow["\tha_{j_+}"', from=3-2, to=2-2]
\end{tikzcd}
\end{equation}
Since $\sA^{0}$ is the trivial category, $\tha_{0}$ gives $\sA$ a base object $e$, which will be the unit object.  Both the $\tha_j$ and the $\phi$ are required to be (strictly) equivariant. 
\begin{enumerate}[(i)]
\item {\textbf{(Equivariance of $\tha_j$)}} The following diagram commutes for $\rho\in \Sigma_j$:
\[\begin{tikzcd}
	{\sP(j)\times \sA^j} & {\sP(j)\times \sA^j} \\
	{\sP(j)\times \sA^j} & \sA
	\arrow["{1\times \rho}", from=1-1, to=1-2]
	\arrow["{\rho \times 1}"', from=1-1, to=2-1]
	\arrow["{\theta_j}", from=1-2, to=2-2]
	\arrow["{\theta_j}"', from=2-1, to=2-2]
\end{tikzcd}\label{diag:example}\]
\item \textbf{(Equivariance of $\phi$)} The following equalities hold.  For  $\sigma\in \SI_k$,  
\begin{center}
    $\phi(k;j_1,...,j_k) = \phi(k; j_{\si(1)},...,j_{\si(k)})\circ (\sigma\times \sigma^{-1})$
\end{center}
For $\tau_i\in \SI_{j_i}$, 
\begin{center}
    $\phi(k;j_1,...,j_k) = \phi(k; j_1,...,j_k)\circ (\id \times \prod_i (\tau_i\times \tau_i^{-1}))$.
\end{center}

The $\tha_j$ and $\phi$ must satisfy the following coherence properties.  

\item \textbf{(Operadic Composition)}.  The following pasting diagrams are equal, where \linebreak
$j_+ = \sum j_r$, $l_+ = \sum_{r,s} l_{r,s}$, and $l_r = \sum_s l_{r,s}$ with $1\leq r\leq k$ and $1\leq s\leq j_r$.

{\tiny{
\[\begin{tikzcd}
	{\sP(k)\times \prod_r(\sP(j_r)\times \sP(l_{r,s})\times \sA^{l_{r,s}})} & &{\sP(k)\times \prod_r(\sP(j_r)\times \sA^{j_r})} \\
	& & & {\sP(k)\times \sA^k} \\
	{\sP(j_+)\times \prod_{r,s}(\sP(l_{r,s})\times \sA^{l_{r,s}})} & & {\sP(j_+)\times \sA^{j_+}} \\
	& {\sP(l_+)\times \sA^{l_+}} & & \sA \\
	&& {}
	\arrow["{\id\times(\prod \id\times \theta_{l_{r,s}})}", from=1-1, to=1-3]
	\arrow["\ga"', from=1-1, to=3-1]
	\arrow["\id\times(\prod_r \tha_{j_r})", from=1-3, to=2-4]
	\arrow["\ga", from=1-3, to=3-3]
	\arrow["\phi", shorten <=6pt, shorten >=6pt, Rightarrow, from=2-4, to=3-3]
	\arrow["{\theta_k}", from=2-4, to=4-4]
	\arrow["{\id\times\prod\theta_{l_{r,s}}}", from=3-1, to=3-3]
	\arrow["\ga"', from=3-1, to=4-2]
	\arrow["\phi", shorten <=6pt, shorten >=6pt, Rightarrow, from=3-3, to=4-2]
	\arrow["{\theta_{j_+}}", from=3-3, to=4-4]
	\arrow["{\theta_{l_+}}"', from=4-2, to=4-4]
\end{tikzcd}\]
}}
equals
{\tiny{ 
\[\begin{tikzcd}[cramped]
	{\sP(k)\times \prod_r(\sP(j_r)\times \sP(l_{r,s})\times \sA^{l_{r,s}})} && {\sP(k)\times \prod_r(\sP(j_r)\times \sA^{j_r})} \\
	& {\sP(k)\times \prod(\sP(l_r)\times \sA^{l_r})} && {\sP(k)\times \sA^k} \\
	{\sP(j_+)\times \prod_{r,s}(\sP(l_{r,s})\times \sA^{l_{r,s}})} \\
	& {\sP(l_+)\times \sA^{l_+}} && \sA
	\arrow["{\id\times(\prod \id\times \theta_{l_{r,s}})}", from=1-1, to=1-3]
	\arrow["{\id\times \ga}"', from=1-1, to=2-2]
	\arrow["\ga"', from=1-1, to=3-1]
	\arrow["{\id\times \phi}", shorten <=6pt, shorten >=6pt, Rightarrow, from=1-3, to=2-2]
	\arrow["\id\times(\prod_r \tha_{j_r})", from=1-3, to=2-4]
	\arrow["{\id\times\prod \theta_{l_r}}"', from=2-2, to=2-4]
	\arrow["\ga", from=2-2, to=4-2]
	\arrow["\phi", shorten <=9pt, shorten >=9pt, Rightarrow, from=2-4, to=4-2]
	\arrow["{\theta_k}", from=2-4, to=4-4]
	\arrow["\ga"', from=3-1, to=4-2]
	\arrow["{\theta_{l_+}}"', from=4-2, to=4-4]
\end{tikzcd}\]
}}
Here the maps labelled $\ga$ are composites of shuffle isomorphisms and maps $\ga\times \id$.  The left rectangle in the top diagram commutes by naturality and the left rectangle in the bottom diagram commutes by the associativity in the definition of an operad.  

\item \textbf{(Operadic Identity and Composition)}   The following two pairs of pasting diagrams are equal.
{\footnotesize{ 
\[\begin{tikzcd}[cramped]
{\sP(k) \times\sA^k} & {\sP(k)\times (\sP(1) \times \sA)^k} & {\sP(k)\times \sA^k}  \\
& {\sP(k)\times \sP(1)^k \times \sA^k} &   \\
& {\sP(k) \times\sA^k} & {\sA} \\
\arrow["{\id\times\io^k}", from=1-1, to=1-2]
\arrow["{\id\times \tha_1^k}", from=1-2, to=1-3]
\arrow["{shuffle}"', from=1-2, to=2-2]
\arrow["{\tha_k}", from=1-3, to=3-3]
\arrow["{\ga\times\id}"', from=2-2, to=3-2]
\arrow["{\tha_k}"', from=3-2, to=3-3]
\arrow["{\ph(k;1^k)}", shorten <=6pt, shorten >=6pt, Rightarrow, from=1-3, to=3-2]
\end{tikzcd} \]
}}
equals
{\footnotesize{
\[\begin{tikzcd}[cramped]
 &  {\sP(k)\times (\sP(1) \times \sA)^k} &  &\\
 {\sP(k) \times\sA^k} & & {\sP(k)\times \sA^k}  & {\sA}\\
 {\sP(k)\times (\sP(1) \times \sA)^k}  & {\sP(k)\times \sP(1)^k \times \sA^k}  &   \\
 \arrow["{\id}"', from=2-1, to=2-3]
 \arrow[""{name=0, anchor=center, inner sep=0}, "\id"', from=2-1, to=2-3]
 \arrow["{\ph_1}"{pos=0.3}, shift left, shorten <=6pt, shorten >=6pt, Rightarrow, from=1-2, to=0]
 \arrow["{\id\times \io^k}", from=2-1, to=1-2]
 \arrow["{\id\times \tha_1^k}", from=1-2, to=2-3]
 \arrow["{\tha_k}", from=2-3, to=2-4]
  \arrow["{\id\times\io^k}"', from=2-1, to=3-1]
 \arrow["{shuffle}"', from=3-1, to=3-2]
 \arrow["{\id\times \io}"', from=2-1, to=3-2]
 \arrow["{\ga\times\id}"', from=3-2, to=2-3]
 \end{tikzcd} \]
 }}
 Here the lower left triangle commutes trivially and the lower middle triangle commutes since $\ga\io = \id$ in the operad $\sP$.  The left triangle in the next diagram commutes similarly; the top rectangle in the diagram that follows commutes trivially.
 {\footnotesize{ 
 \[\begin{tikzcd}[cramped]
 {\sP(k) \times\sA^k} & {\sP(1) \times \sP(k) \times\sA^k} & {\sP(1) \times\sA}  \\
 & {\sP(k) \times\sA^k} & {\sA}  \\
 \arrow["{\io}", from=1-1, to=1-2]
 \arrow["{\id\times \tha_k}", from=1-2, to=1-3]
 \arrow["{\id}"', from=1-1, to=2-2]
 \arrow["{\ga\times\id}"', from=1-2, to=2-2]
 \arrow["{\tha_1}", from=1-3, to=2-3]
 \arrow["{\ph(1;k)}", shorten <=6pt, shorten >=6pt, Rightarrow, from=1-3, to=2-2]
 \arrow["{\tha_k}"', from=2-2, to=2-3]
 \end{tikzcd} \]
 }}
 equals
 {\footnotesize{
 \[\begin{tikzcd}[cramped]
  {\sP(k) \times\sA^k}  &  {\sP(1)\times \sP(k) \times\sA^k}  \\
  {\sA} &  {\sP(1) \times\sA} \\
  & {\sA} \\ 
 \arrow["{\io}", from=1-1, to=1-2]
 \arrow["{\tha_k}"', from=1-1, to=2-1]
 \arrow["{\id\times\tha_k}", from=1-2, to=2-2]
 \arrow["{\io}", from=2-1, to=2-2]
 \arrow["{\tha_1}", from=2-2, to=3-2]
 \arrow[""{name=0, anchor=center, inner sep=0}, "\id"', from=2-1, to=3-2]
 \arrow["{\phi_1}"{pos=0.4}, shorten <=6pt, shorten >=6pt, Rightarrow, from=2-2, to=0]
  \end{tikzcd} \] 
 }}
 \end{enumerate}
\end{defn}

\begin{rem}\label{eunit} The case $(2;0,1)$ and $(2;1,0)$ of \autoref{phikey} relate the unit object $e$ with the unit operation given by $e_1\in\sP(1)$.  Focusing on $e_2\in \sP(2)$ and writing 
$\tha_2(e_2, a, b) = a\otimes b$, the $\ph$ in those two instances of \autoref{phikey} give left and right unit object  isomorphisms
$$  e\otimes a \iso \tha_1(e_1; a) \iso a\otimes e $$
\end{rem}

\begin{rem}\label{norm2} In \cite{GMMO1}, we described a $\sP$-pseudoalgebra $\sA$ as {\em{normal}} if $\ph_1$ is the identity, so that the operation induced by $e_1\in \sP(1)$ is the identity.  This has the effect of strictifying all of the  operatic identity and composition diagrams.   We defined an analogous notion of a normal $\sF$-pseudoalgebra in \autoref{norm1}.  \autoref{thm2} has two correct interpretations, one restricted to normal  $\sP$ and $\sF$ pseudoalgebras, the other not.  \autoref{thm1} is more natural interpreted in terms of normal $\sP$-pseudoalgebras, since symmetric monoidal categories are intrinsically normal, as observed in \autoref{norm1}.
\end{rem}

\begin{defn}[$\sP$-pseudomorphism]\label{pseudomap}
    A $\sP$-pseudomorphism $(\bF,\zeta): (\sA, \theta, \phi)\rightarrow (\sB, \vartheta, \varphi)$ consists of a functor 
    $\bF\colon \sA\rtarr \sB$ such that for each $j$ the following diagram commutes up to a natural isomorphism $\zeta_j$.
\begin{equation}\label{Fzeta}
\begin{tikzcd}[cramped]
	{\sP(j)\times \sA^j} & {\sP(j)\times\sB^j} \\
	\sA & {\sB}
	\arrow["{\id\times \bF^j}", from=1-1, to=1-2]
	\arrow["{\theta_j}"', from=1-1, to=2-1]
	\arrow["{\zeta_j}", shorten <=6pt, shorten >=6pt, Rightarrow, from=1-2, to=2-1]
	\arrow["{\vartheta_j}", from=1-2, to=2-2]
	\arrow["\bF"', from=2-1, to=2-2]
\end{tikzcd}
\end{equation}
The pair $(\bF,\ze)$ must satisfy the following properties.
\begin{enumerate}[(i)]
\item \textbf{Equivariant} $\zeta_j = \zeta_j\circ (\sigma\times \sigma^{-1})$ for all $\sigma\in \Sigma_j$.
\item \textbf{Preserve Identity} The following two  pasting diagrams are equal: 
{\footnotesize{ 
\[\begin{tikzcd}[cramped]
	\sA & \sB \\
	%{\ast\times \sA} & {\ast\times \sB} \\
	{\sP(1)\times \sA} & {\sP(1)\times \sB} \\
	& \sA && \sB
	\arrow["\bF", from=1-1, to=1-2]
	\arrow["\io"', from=1-1, to=2-1]
	\arrow["\io"', from=1-2, to=2-2]
	\arrow[""{name=0, anchor=center, inner sep=0}, "\id", from=1-2, to=3-4]
	%\arrow["{\id \times \bF}", from=2-1, to=2-2]
	%\arrow["{\io\times \id}"', from=2-1, to=3-1]
	%\arrow["{\io\times \id}"', from=2-2, to=3-2]
	\arrow["{\id \times \bF}", from=2-1, to=2-2]
	\arrow["{\theta_1}"', from=2-1, to=3-2]
	\arrow["{\zeta_1^{-1}}", shorten <=2pt, shorten >=2pt, Rightarrow, from=3-2, to=2-2]
	\arrow["{\vartheta_1}"', from=2-2, to=3-4]
	\arrow["\bF"', from=3-2, to=3-4]
	\arrow["\varphi_1"', shorten <=1pt, shorten >=1pt, Rightarrow, from=2-2, to=0]
\end{tikzcd} \]

equals

\[ \begin{tikzcd}
	\sA & \sB \\
	 & \\
	 {\sP(1)\times \sA} & {\sA } && {\sB} 
	\arrow["\bF", from=1-1, to=1-2]
	\arrow["\io"', from=1-1, to=3-1]
	\arrow[""{name=0, anchor=center, inner sep=0}, "\id", from=1-1, to=3-2]
	\arrow["\id", from=1-2, to=3-4]
	%\arrow["{e\times \id}"', from=2-1, to=3-1]
	\arrow["{\theta_1}"', from=3-1, to=3-2]
	\arrow["\bF"', from=3-2, to=3-4]
	\arrow["{\varphi_1}"', shorten <=4pt, shorten >=4pt, Rightarrow, from=3-1, to=0]
\end{tikzcd}\]
}}

\item \textbf{Preserve Composition}  The following two pasting diagrams are equal:
\end{enumerate} 
{\footnotesize{ 
\[ \begin{tikzcd}[cramped]
	{\sP(k)\times \prod (\sP(j_r)\times \sA^{j_r})} && {\sP(k)\times \prod (\sP(j_r)\times \sB^{j_r})} \\
	& {\sP(k)\times \sA^k} && {\sP(k)\times \sB^k} \\
	{\sP(J)\times \sA^J} \\
	& \sA && {\sB}
	\arrow["{\id\times (\prod \id\times {\bF}^{j_r})}", from=1-1, to=1-3]
	\arrow["{\id\times (\prod \theta_{j_r})}"', from=1-1, to=2-2]
	\arrow["\ga"', from=1-1, to=3-1]
	\arrow["{\id\times \prod \zeta_{j_r}}", shorten <=6pt, shorten >=6pt, Rightarrow, from=1-3, to=2-2]
	\arrow["{\id\times (\prod \vartheta_{j_r})}", from=1-3, to=2-4]
	\arrow["{\id\times {\bF}^k}"', from=2-2, to=2-4]
	\arrow["\phi", shorten <=6pt, shorten >=6pt, Rightarrow, from=2-2, to=3-1]
	\arrow["{\theta_k}"', from=2-2, to=4-2]
	\arrow["{\zeta_k}", shorten <=6pt, shorten >=6pt, Rightarrow, from=2-4, to=4-2]
	\arrow["{\vartheta_k}", from=2-4, to=4-4]
	\arrow["{\theta_J}"', from=3-1, to=4-2]
	\arrow["\bF"', from=4-2, to=4-4]
\end{tikzcd}\]

equals

\[\begin{tikzcd}
	{\sP(k)\times \prod (\sP(j_r)\times \sA^{j_r})} && {\sP(k)\times \prod (\sP(j_r)\times \sB^{j_r})} \\
	&&& {\sP(k)\times \sB^k} \\
	{\sP(J)\times \sA^J} && {\sP(J)\times \sB^J} \\
	& \sA && {\sB}
	\arrow["{\id\times (\prod \id\times {\bF}^{j_r})}", from=1-1, to=1-3]
	\arrow["\ga"', from=1-1, to=3-1]
	\arrow["{\id\times (\prod \vartheta_{j_r})}", from=1-3, to=2-4]
	\arrow["\ga", from=1-3, to=3-3]
	\arrow["\varphi", shorten <=6pt, shorten >=6pt, Rightarrow, from=2-4, to=3-3]
	\arrow["{\vartheta_k}", from=2-4, to=4-4]
	\arrow["{\id\times {\bF}^J}", from=3-1, to=3-3]
	\arrow["{\theta_J}"', from=3-1, to=4-2]
	\arrow["{\zeta_J}", shorten <=6pt, shorten >=6pt, Rightarrow, from=3-3, to=4-2]
	\arrow["{\vartheta_J}", from=3-3, to=4-4]
	\arrow["\bF"', from=4-2, to=4-4]
\end{tikzcd}\]
}}
\end{defn}

\begin{defn}[$\sP$-$2$-cell]\label{2cell1} Let $(\bF,\ze), (\bG,\xi) \colon \sA \rtarr \sB$ be $\sP$-pseudomorphisms.
A $\sP$-$2$-cell $\la\colon \bF \Rightarrow \bG$, alias a $\sP$-transformation, is a transformation 
$\la\colon \bF\rtarr \bG$, not necessarily invertible, such that the following two pasting diagrams are equal.
{\footnotesize{ 
\[\begin{tikzcd}
\sP(n)\times \sA^n & & \sP(n)\times \sB^n & = & \sP(n)\times \sA^n & & \sP(n)\times \sB^n \\
 & & & & \\
 \sA & & \sB & & \sA & & \sB \\
  \arrow["{\id\times\bF^n}", curve={height=-14pt}, from=1-1, to=1-3]
  \arrow["{\id\times\bG^n}", curve={height=14pt}, from=1-1, to=1-3]
  \arrow["{\id\times\bF^n}", from=1-5, to=1-7]
  \arrow["{\tha_n}"', from=1-1, to=3-1]
  \arrow["{\tha_n}", from=1-3, to=3-3]
  \arrow["{\tha_n}"', from=1-5, to=3-5]
  \arrow["{\tha_n}", from=1-7, to=3-7]
  \arrow["{\bG}"', from=3-1, to=3-3]
  \arrow["{\bF}"', curve={height=-14pt}, from=3-5, to=3-7]
  \arrow["{\bG}"', curve={height=14pt}, from=3-5, to=3-7]
  \arrow["{\xi}"', shorten <=9pt, shorten >=9pt, Rightarrow, from=1-1, to=3-3]
  \arrow["{\ze}", shorten <=9pt, shorten >=9pt, Rightarrow, from=1-5, to=3-7]
  \arrow["{\id\times\la^n}", shorten <=3pt, shorten >=3pt, Rightarrow, from=1-1, to=1-3]
   \arrow["{\id\times\la}"', shorten <=9pt, shorten >=9pt, Rightarrow, from=3-5, to=3-7]
 \end{tikzcd}\]
}}
\end{defn}

The $\sP$-pseudoalgebras together with the
$\sP$-pseudomorphisms and $\sP$-$2$-cells form the $2$-category $\PPs$. 

\subsection{Details about $\FPs$ and $\FPR$}\label{FDetails}

We defined $\sF$-pseudoalgebras and $\sF$-pseudomorphisms in \autoref{PsFun}.  The following definition is parallel to \autoref{2cell1}, as is especially clear when $\sA$ and $\sB$ are strictly special.

\begin{defn}[$\sF$-$2$-cell]\label{2cell2} Let $(\bF,\ze), (\bG,\xi) \colon \sA \rtarr \sB$ be $\sF$-pseudomorphisms of 
$\sF$-pseudoalgebras. An $\sF$-$2$-cell, alias $\sF$-transformation, $\la\colon (\bF,\ze) \rtarr (\bG,\xi)$ is given by a transformation $\la\colon \bF \Rightarrow \bG$, not necessarily invertible, such that the following pasting diagrams are equal.   
{\footnotesize{ 
\[\begin{tikzcd}[cramped]
\sF(\bm,\bn)\times \sA_{\bm} & & \sF(\bm,\bn)\times \sB_{\bm}& = & 
\sF(\bm,\bn)\times \sA_{\bm} & & \sF(\bm,\bn)\times \sB_{\bm}\\
 & & & & \\
 \sA_{\bn} & & \sB_{\bn} & & \sA_{\bn}& & \sB_{\bn} \\
  \arrow["{\id\times\bF_{\bm}}", curve={height=-14pt}, from=1-1, to=1-3]
  \arrow["{\id\times\bG_{\bm}}", curve={height=14pt}, from=1-1, to=1-3]
  \arrow["{\id\times\bF_{\bm}}", from=1-5, to=1-7]
  \arrow["{\PS}"', from=1-1, to=3-1]
  \arrow["{\PS}", from=1-3, to=3-3]
  \arrow["{\PS}"', from=1-5, to=3-5]
  \arrow["{\PS}", from=1-7, to=3-7]
  \arrow["{\bG_{\bn}}"', from=3-1, to=3-3]
  \arrow["{\bF_{\bn}}"', curve={height=-14pt}, from=3-5, to=3-7]
  \arrow["{\bG_{\bn}}"', curve={height=14pt}, from=3-5, to=3-7]
  \arrow["{\xi}"', shorten <=9pt, shorten >=9pt, Rightarrow, from=1-1, to=3-3]
  \arrow["{\ze}", shorten <=9pt, shorten >=9pt, Rightarrow, from=1-5, to=3-7]
  \arrow["{\la}", shorten <=3pt, shorten >=3pt, Rightarrow, from=1-1, to=1-3]
   \arrow["{\la}"', shorten <=9pt, shorten >=9pt, Rightarrow, from=3-5, to=3-7]
 \end{tikzcd}\]
}}
\end{defn}

The $\sF$-pseudoalgebras together with the
$\sF$-pseudomorphisms and $\sF$-$2$-cells form the $2$-category $\FP$.  Restricting to very special or strictly special objects gives $\FPs$ and $\FPR$.  We must prove the comparison of these, as stated in
Propositions \ref{ImageOfRPSAlg} and \ref{FPsFPR}.

\begin{proof}[Proof of \autoref{ImageOfRPSAlg}]
    Let $\sA \in \FPs$ and write $\sA_\Pi$ for the restriction of $\sA$ to $\Pi$. By what it means to be very special,  $\de \colon \sA_\Pi \rtarr \bR\bL\sA_\Pi$ is an isomorphism.  We use the given $\sF$-pseudoalgebra structure on $\sA$ to extend the $\PI$-category $\bR\bL\sA_{\PI}$ to an $\sF$-pseudoalebra, which we denote by $\bR\bL\sA$.   For $\phi \in \sF(\textbf{m},\textbf{n})$, we define 
   $(\bR\bL \sA)(\ph)$ to be the evident composite
   \[ \begin{tikzcd}
    {(\bR\bL\sA_\Pi)(\textbf m)} & {\sA(\textbf m)} & {\sA(\textbf n)} & {(\bR\bL\sA_\Pi)(\textbf n).}
    \arrow["{\de^{-1}}", from=1-1, to=1-2]
    \arrow["{\sA(\phi)}", from=1-2, to=1-3]
    \arrow["{\de}", from=1-3, to=1-4]
    \end{tikzcd}\]
If $\phi \in \Pi(\textbf{m},\textbf{n})$, then this is $(\bR\bL\sA_\Pi)(\phi)$ since $\de$ is an isomorphism of $\PI$-categories.     To check that this gives a pseudoalgebra, we need to check that it gives  a pseudofunctor $\sF \rtarr \textbf{Cat}_*$, but this follows directly from the pseudofunctoriality of $\sA$. Intuitively, we are whiskering everything from left and right by $\de^{-1}$ and $\de$.  The pseudoalgebra  $\bR\bL\sA$ is isomorphic to $\sA$ since the following  commutative diagram for 
$f \in \sF(\textbf{m},\textbf{n})$ shows that the isomorphism $\de$ is a pseudomorphism.
    \[\begin{tikzcd}
    {\sA(\textbf m)} &&& {\sA(\textbf n)} \\
    {\bR\bL(\sA_\Pi)(\textbf m)} & {\sA(\textbf m)} & {\sA(\textbf n)} & {\bR\bL(\sA_\Pi)(\textbf n)}
    \arrow["{\sA(f)}", from=1-1, to=1-4]
    \arrow["{\de}"', from=1-1, to=2-1]
    \arrow["{\xi_n}", from=1-4, to=2-4]
    \arrow["{\de^{-1}}", from=2-1, to=2-2]
    \arrow["{\sA(f)}", from=2-2, to=2-3]
    \arrow["{\de}", from=2-3, to=2-4]
    \end{tikzcd}\]
\end{proof}

\begin{proof}[Proof of \autoref{FPsFPR}]  We claim that the construction $\bR\bL$  of the previous proof gives the map on $0$-cells  of a $2$-functor $\bR\bL\colon \FPs\rtarr \FPR$ and that  
$\xi\colon \sA \rtarr \bR\bL \sA$ gives an isomorphism from the identity $2$-functor of $\FPs$ to 
$\bI\bR\bL$, where $\bI \colon \FPR \rtarr \FPs$ is the inclusion.  For functoriality on $1$-cells,  let 
$\bF\colon \sA \rtarr \sB$ be an $\sF$-pseudomorphism of $\sF$-pseudoalgebras.  We define
$\bR\bL\bF\colon \bR\bL \sA \rtarr \bR\bL \sB$ on objects as the composite 
    \[\begin{tikzcd}
    {(\bR\bL\sA)(\textbf n)} & {\sA(\textbf n)} & {\sB(\textbf n)} & {(\bR\bL\sB)(\textbf n)}
    \arrow["{\de^{-1}}", from=1-1, to=1-2]
    \arrow["{\bF(\bn)}", from=1-2, to=1-3]
    \arrow["{\de}", from=1-3, to=1-4]
    \end{tikzcd}\]
    We need to check that $\bR\bL \bF$ defines a pseudotransformation.  For $\ph\in \sF(\textbf{m},\textbf{n})$,  the left and right squares of the following commute:
    \[\begin{tikzcd}
    {(\bR\bL\sA)(\textbf m)} & {\sA(\textbf m)} & {\sB(\textbf m)} & {(\bR\bL\sB)(\textbf m)} \\
    {(\bR\bL\sA)(\textbf n)} & {\sA(\textbf n)} & {\sB(\textbf n)} & {(\bR\bL\sB)(\textbf n)}
    \arrow["{\xi_m^{-1}}", from=1-1, to=1-2]
    \arrow["{\bR\bL\sA(f)}"', from=1-1, to=2-1]
    \arrow["{\bF(\bm)}", from=1-2, to=1-3]
    \arrow["{\sA(f)}"', from=1-2, to=2-2]
    \arrow["{\de}", from=1-3, to=1-4]
    \arrow["{\phi_f}"', shorten <=6pt, shorten >=6pt, Rightarrow, from=1-3, to=2-2]
    \arrow["{\sB(f)}", from=1-3, to=2-3]
    \arrow["{\bR\bL\sB(f)}", from=1-4, to=2-4]
    \arrow["{(\de^{-1}}", from=2-1, to=2-2]
    \arrow["{\bF(\bn)}", from=2-2, to=2-3]
    \arrow["{\de}", from=2-3, to=2-4]
    \end{tikzcd}\]
Thus  $\phi_f$ for $\bR\bL\bF$ is  $\de^{-1}\phi_f \de$. As before, since whiskering by $\de$ commutes with everything in sight, all of the coherence conditions hold automatically. Compatibility with $2$-cells is similar.  Since $\bR\bL \colon \FPs \rtarr \FPR$ restricts to the identity on $\FPR$, it is left inverse to the inclusion $\bI\colon \FPR \rtarr \FPs$.  Therefore $\bR\bL$ and $\bI$ are inverse
equivalences of $2$-categories.
\end{proof}

\subsection{Details of the $2$-functor $\bQ$}\label{Qdetails}

We prove Propositions \ref{QProp1} and \ref{QProp2} here.   For simplicity of notation, we write $\sE(n)$ for $\sE(\textbf{n},\textbf{1})$. 

\begin{proof}[Proof of \autoref{QProp1}]
For $\sB\in \FPR$ with $\sB(1) =\sA$, we have $\bQ\sB = \sA$ with structure maps $\tha = \tha_n\colon \sA^n \rtarr \sA$   as defined in \autoref{SecQ}.  We must prove that $\sA$ is a $\sP$-pseudoalgebra.
Thus we must show that our $\tha_n$ satisfy the properties specified in \autoref{PPsDefn}.   

We write $\PS$ for the action maps $\sF(\bm,\bn)\times \sA^m \rtarr \sA^n$, calling them $\PS_m$ when $n=1$.  In particular, identifying $\ph_1\in \sE(1)$ with $e_1\in \sP(1)$, we defined $\tha_1$ to be 
$\PS_1$.  Therefore the identity isomorphism of the pseudofunctor $\sB$ gives the identity isomorphism $\ph_1$ in \autoref{phione}.  Similarly, when restricting to canonical products by focusing on the objects $e_k$ and $e_{j_i}$ of $\sP(k)$ and the $\sP(j_i)$, the associativity isomorphism $\ph$ in \autoref{phikey} is given by the composition isomorphism of the pseudofunctor $\sB$.  Extending the $\tha_n$ and $\ph$ by the equivariance formulas in \autoref{PPsDefn}, which are dictated by the compositional equivariance of the pseudofunctor $\sB$, this outlines the construction of $\tha$ and the $\ph$. 

We give a better idea of the associativity diagram \autoref{phikey} by comparing it with the restriction to 
$\sE$ of an analogous diagram for the action of  $\sF$ on $\sB$, defined using the product operation that we used in \autoref{wedprod}. 

\begin{equation}\label{phikeytoo}
\begin{tikzcd}
	{\sE(k)\times (\prod_i\sE(j_i)\times \sA^{j_i})} & {\sE(k)\times \sA^k} \\
	& \sA \\
	{\sE(k)\times \prod_i \sE(j_i)\times \sA^{j_+}} & {\sE(j_+)\times \sA^{j_+}}
	\arrow["\id\times \prod_i\PS_{j_i}", from=1-1, to=1-2]
	\arrow["{\text{shuffle}}"', from=1-1, to=3-1]
	\arrow["\PS_k", from=1-2, to=2-2]
	\arrow["{\phi}", shorten <=6pt, shorten >=6pt, Rightarrow, from=1-2, to=3-1]
	\arrow["\ga\times \id"', from=3-1, to=3-2]
	\arrow["\PS_{j_+}"', from=3-2, to=2-2]
\end{tikzcd}
\end{equation}
Here $\ga$ is the precursor in $\sE$ of the structure maps $\ga$ of the operad $\sP$. As is implicit in the comparisons in \autoref{SecQ}, the latter are constructed from the former by use of $\LA(\bn,\bn) = \SI_n$ and composition in $\sF$.   Looking just at the canonical products $\ph_n \in \sE(\bn)$, this diagram commutes up to a natural isomorphism $\ph$ by \autoref{wedprod} and the associativity isomorphism of the pseudofunctor $\sB$.  In particular, this gives the diagram \autoref{phikey} on the objects $e_n\in \sP(n)$.   We can use \autoref{cando} to relate the diagrams \autoref{phikey} and 
\autoref{phikeytoo} via the following diagram.
{\small{
    \begin{equation}\label{libel}
    \begin{tikzcd}
    {\sP(k)\times\prod \sP(j_i)\times\sA^{j_i}} &&& {\sP(k)\times\sA^k} \\
    & {\sE(k)\times\prod\sE(j_i)\times\sA^{j_i}} & {\sE(k)\times\sA^k} \\
    \\
    & {\sE(j_+)\times\sA^{j_+}} & \sA \\
    {\sP(j_+)\times\sA^{j_+}} &&& \sA
    \arrow[""{name=0, anchor=center, inner sep=0}, from=1-1, to=1-4]
    \arrow["proj"', from=1-1, to=2-2]
    \arrow[from=1-1, to=5-1]
    \arrow["proj", from=1-4, to=2-3]
    \arrow[""{name=1, anchor=center, inner sep=0}, from=1-4, to=5-4]
    \arrow[""{name=2, anchor=center, inner sep=0}, from=2-2, to=2-3]
    \arrow[from=2-2, to=4-2]
    \arrow[""{name=3, anchor=center, inner sep=0}, from=2-3, to=4-3]
    \arrow[""{name=4, anchor=center, inner sep=0}, from=4-2, to=4-3]
    \arrow["proj"', from=5-1, to=4-2]
    \arrow[""{name=5, anchor=center, inner sep=0}, from=5-1, to=5-4]
    \arrow[equals, from=5-4, to=4-3]
    \arrow[shorten <=6pt, shorten >=6pt, Rightarrow, from=0, to=2]
    \arrow[shorten <=6pt, shorten >=6pt, Rightarrow, from=1, to=3]
    \arrow["{\phi}"', shorten <=6pt, shorten >=6pt, Rightarrow, from=2, to=4]
    \arrow[shorten <=6pt, shorten >=6pt, Rightarrow, from=5, to=4]
    \end{tikzcd}
 \end{equation}
  }}
The outer and inner and squares are rewritings of \autoref{phikey} and \autoref{phikeytoo}.  The left trapezoid commutes trivially. The bottom and right trapezoids are instances of \autoref{cando}.  The top trapezoid  is given by $k$ instances of \autoref{cando}, glued together via \autoref{wedprod}.
%In particular, adjacent faces begin with the same natural isomorphism. E.g., along the arrow $\theta\colon \sP(m_+)\times\sA^m \rtarr \sA$, both adjacent faces begin with the natural isomorphism $\phi_{m_+} \colon \theta \implies \circ(proj)$, and along the edge $\gamma \times 1 \colon \sP(n)\times\prod\sP(m_i)\times\sA^m$, both adjacent faces begin with the identity natural isomorphism. What this lets us do is paste these inset squares together, to form a smaller central cube in the pasting diagram. Since both cubes finish at $\sB$, each edge composite in the pasting diagram factors through our smaller cube. 

We need to check the  Operadic Composition law in \autoref{PPsDefn}, which states that the two pasting diagrams displayed there are equal.  We can replace all $\sP(n)$ by $\sE(n)$ and all $\tha_n$ by $\PS_n$ in those diagrams.  Remember that $\ga$ in those diagrams  is always a composite of structure maps  $\ga\times \id$ and shuffle isomorphisms.   There result considerably simpler diagrams that just involve canonical products in $\sF$-pseudoalgebras and which are easily checked to be equal by hand. Checking that our diagrams compose properly, we see that Operadic Composition for the pasting diagrams of the 
$\sF$-pseudoalgebra $\sB$ imply Operadic composition for the pasting diagrams of $\bQ\sB$.  Similar but simpler arguments show that the implicit Operadic Identity and Composition law for the $\sF$-pseudoalgebra $\sB $ implies its analog for $\bQ\sB$, as specified in \autoref{PPsDefn}. 
\end{proof}

\begin{proof}[Proof of \autoref{QProp2}]   We have defined $\bQ$ on objects ($0$-cells) and must extend the definition to $1$-cells and $2$-cells.  Thus  let $\et\colon \sB \rtarr \sB'$ be a $1$-cell between objects $\sB$ and $\sB'$ of $\FPR$, where $\sB = \bR\sA$ and $\sB' = \bR\sA'$ as $\PI$-categories.  The pseudomorphism $\et$ must be given by a map $\et\colon \bR\sA \rtarr \bR\sA'$ of $\PI$-categories. We claim that $\et\colon \sA \rtarr \sA'$ gives a map, $\bQ\et$, of $\sP$-pseudoalgebras.  This just means that our construction of the action functors $\tha\colon \sP(n) \times \sA^n \rtarr \sA$ from the functor $\sB\colon \sF\rtarr \mathbf{Cat}$ is pseudonatural.  Thus we must have naturality diagrams
    \[\begin{tikzcd}
    	{\sP(n)\times \sA^n} & \sA \\
    	{\sP(n)\times (\sA')^n} & \sA'
    	\arrow[""{name=0, anchor=center, inner sep=0}, "\theta", from=1-1, to=1-2]
    	\arrow["{1\times \eta^n}"', from=1-1, to=2-1]
    	\arrow["{\eta}", from=1-2, to=2-2]
    	\arrow[""{name=1, anchor=center, inner sep=0}, "\theta"', from=2-1, to=2-2]
    	\arrow[shorten <=6pt, shorten >=6pt, Rightarrow, from=0, to=1]
    \end{tikzcd}\]
    Modulo further details needed for the construction on  morphisms on the left, this is a consequence of the following diagram:
    \[\begin{tikzcd}
    	{\sE(\textbf n,\textbf 1) \times\LA(\textbf n, \textbf n) \times \sA^n} & {\sE(\textbf n,\textbf 1) \times \sA^n} & \sA \\
    	{\sE(\textbf n,\textbf 1) \times\LA(\textbf n, \textbf n) \times (\sA')^n} & {\sE(\textbf n,\textbf 1) \times (\sA')^n} & \sA'
    	\arrow["{1\times\PS}", from=1-1, to=1-2]
    	\arrow["{1\times1\times\eta^n}", from=1-1, to=2-1]
    	\arrow[""{name=0, anchor=center, inner sep=0}, "\PS", from=1-2, to=1-3]
    	\arrow["{1\times\eta^n}", from=1-2, to=2-2]
    	\arrow["{\eta}", from=1-3, to=2-3]
    	\arrow["{1\times\PS}"', from=2-1, to=2-2]
    	\arrow[""{name=1, anchor=center, inner sep=0}, "\PS"', from=2-2, to=2-3]
    	\arrow[shorten <=9pt, shorten >=9pt, Rightarrow, from=0, to=1]
    \end{tikzcd}\]
Given $\et'\colon \sA' \rtarr \sA"$, we can compose diagrams to check that composition behaves correctly,
$(\bQ\et')(\bQ\et) = \bQ(\et' \et)$, coherently.  The analogous verifications for $2$-cells are similar inspections of definitions.
\end{proof}

\subsection{Details of the $2$-functor $\bR$}\label{Rdetails}

\begin{proof}[Proof of \autoref{RProp1}] For a $\sP$-pseudoalgebra $\sA$, we must prove that $\bR\sA$ is an $\sF$-pseudoalgebra.  Since $\bR\sA$ is strictly special, we only need to show that $\bR\sA$ is a pseudofunctor.  The identity isomorphism required by \autoref{PsFun} is immediate from the identity transformation \autoref{phione} and is unnecessary in the normal variant of Remarks \ref{norm1} and \ref{norm2}.  We must check the composition isomorphism required by \autoref{PsFun}.   That is, we
must check that $\bR\sA(\psi\phi)$ is naturally isomorphic to $(\bR\sA\psi)(\bR\sA\phi)$, where $\phi\colon \textbf{m}\rtarr\textbf{n}$ and $\psi\colon \textbf{n}\rtarr\textbf{k}$ are maps in $\sF$.  Dealing with projections separately as in the construction of $\bR$, we can assume that $\ph$ and $\ps$ are in $\sE$.  

As in \autoref{decompose}, decompose $\phi$ and $\psi$ as composites of a permutation and a wedge of canonical product maps:
    \[\begin{tikzcd}
    	{\textbf m} & {\textbf m} && {\textbf n} & {\textbf n} \\
    	& {\textbf n} &&& {\textbf k}
    	\arrow["\sigma", from=1-1, to=1-2]
    	\arrow["\phi"', from=1-1, to=2-2]
    	\arrow["{\vee \phi_{a_i}}", from=1-2, to=2-2]
    	\arrow["\tau", from=1-4, to=1-5]
    	\arrow["\psi"', from=1-4, to=2-5]
    	\arrow["{\vee \phi_{b_i}}", from=1-5, to=2-5]
    \end{tikzcd}\]
For simplicity we may assume that $k=1$; the general case follows by use of products as in 
\autoref{wedprod}.   We can decompose $\ps\com\ph$ similarly.
    \[\begin{tikzcd}
    	{\textbf m} & {\textbf m} & {\textbf n} & {\textbf n} \\
    	& {\textbf m} && {\textbf 1}
    	\arrow["\sigma", from=1-1, to=1-2]
    	\arrow["{\vee \phi_{a_i}}", from=1-2, to=1-3]
    	\arrow["\rho", from=1-2, to=2-2]
    	\arrow["\tau", from=1-3, to=1-4]
    	\arrow["{\phi_n}", from=1-4, to=2-4]
    	\arrow["{\phi_m}"', from=2-2, to=2-4]
    \end{tikzcd}\]
Note that we can take $\rho$ to be the block permutation of $m = a_1+ \cdots + a_n$ letters determined by $\ta$. (Some $a_i$ may be zero.). On applying $\bR$, the top composite is $(\bR\sA\ps)(\bR\sA \ph)$ and the bottom composite is $\bR\sA(\ps\ph)$ in the resulting diagram
\[\begin{tikzcd}
    	{\sA^m} & {\sA^m} & {\sA^n} & {\sA^n} \\
    	&{\sA^m} && {\sA}
    	\arrow["\sigma", from=1-1, to=1-2]
    	\arrow["{\prod \phi_{a_i}}", from=1-2, to=1-3]
	\arrow["\rho", from=1-2, to=2-2]
    	\arrow["\tau", from=1-3, to=1-4]
    	\arrow["{\phi_n}", from=1-4, to=2-4]
	\arrow["{\phi_m}"', from=2-2, to=2-4]
	\arrow["{\phi}", shorten <=6pt, shorten >=6pt, Rightarrow, from=1-4, to=2-2]. 
    \end{tikzcd}\]
Up to permutations, on the top we are taking the canonical $n$-fold product of the canonical $a_i$-fold products, $1\leq i\leq n$, of the $m$-variables; on the bottom, we are directly taking their $m$-fold product .  These agree up to specified permutation, and the required natural isomorphism z$\ph$ is given by \autoref{FPRdiag2}.

To complete the check that $\bR\sA$ is a pseudofunctor, we must check coherence conditions for the composition just defined.  Unitality has two variants, as in Remarks \ref{norm1} and \ref{norm2}, but is direct from the unitality of $\sA$.  Associativity up to coherent isomorphism requires a check.  Changing notation away from Greek letters for clarity, we must check associativity for a composable triple $(f, g, h)$ of maps in $\sF$.  Writing $\phi_{f,g}$ for the natural isomorphism $\bR\sA(f)\bR\sA(g)\implies \bR\sA(fg)$ just constructed and writing $\ast$ for whiskering, we need to show that the following diagram commutes:
\[\begin{tikzcd}
    {(\bR\sA h\circ \bR\sA g) \circ \bR\sA f} & {\bR\sA h\circ (\bR\sA g \circ \bR\sA f)} \\
    {\bR\sA (h g) \circ \bR\sA f} & {\bR\sA h \circ \bR\sA (gf)} \\
    {\bR\sA((hg)f)} & {\bR\sA(h(gf))}
    \arrow[Rightarrow, from=1-1, to=1-2]
    \arrow["{\phi_{h,g}*\id}"', Rightarrow, from=1-1, to=2-1]
    \arrow["{\id * \phi_{g,f}}", Rightarrow, from=1-2, to=2-2]
    \arrow["{\phi_{hg,f}}"', Rightarrow, from=2-1, to=3-1]
    \arrow["{\phi_{h,gf}}", Rightarrow, from=2-2, to=3-2]
    \arrow[Rightarrow, from=3-1, to=3-2]
    \end{tikzcd}\]
    Details of the rather horrendous diagrams needed are in \cite[Section 3]{LRZZ2} and will not be repeated here.\footnote{Except for details in this subsection and the next, everything in \cite{LRZZ, LRZZ2} has been reworked in this paper, including corrections of a few mistakes in those sources.  The diagrammatic proofs omitted here are worked out correctly in those two sources in work that is entirely due to the REU participant coauthors.}. In brief, writing out the top and bottom composites explicitly and chasing diagrams that are otherwise strictly commutative, we find that the required equality is given by special cases of the equality of Operadic Composition pasting diagrams that is a key part of the definition of a $\sP$-pseudoalgebra \autoref{PPsDefn}.
\end{proof}

\begin{proof}[Proof of \autoref{RProp2}]  We have defined $\bR$ on objects ($0$-cells) and must extend the definition to $1$-cells and $2$-cells.  Thus  let $(\bF,\ze)\colon \sA \rtarr \sB$ be a $\sP$-pseudomorphism, as defined in \autoref{pseudomap}.  We must extend the evident morphism $
\bR\colon \bR\sA \rtarr \bR\sB$ of $\PI$-categories to a pseudomorphism of pseudofunctors 
$\sF\rtarr \mathbf{Cat}$.   Thus we must   show that each diagram
 \[\begin{tikzcd}
\sF(\bm,\bn)\times \sA^m  &  \sF(\bm,\bn)\times \sB^m. \\
\sA^n & \sB^n 
\arrow["\id\times \bF^m", from=1-1, to=1-2]
\arrow["\PS", from=1-1, to=2-1]
\arrow["\PS", from=1-2, to=2-2]
\arrow["\bF^n"', from=2-1, to=2-2]
\arrow["\ze",  Rightarrow, from=1-2, to = 2-1]
   \end{tikzcd}\]
commutes up to a natural isomorphism $\ze$.  Since all maps in $\sF$ are composites of maps in $\PI$ and wedges of canonical products, it suffices (using \autoref{wedprod}) to define $\ze$ when $m=n$, $k=1$, and we restrict to $\ph_n\in\sE(\bn,\mathbf{1})$.   But then we are looking at the canonical product, which is given by $\tha_n$ restricted to $e_n$, hence $\ze$ is given directly from the $\ze$ of $\bF$. 

    We must still check that the appropriate coherence conditions are satisfied for a pseudotransformation. We  label the natural isomorphisms for $\bR\sA$ as $\alpha$, and those for $\bR\sB$ as $\beta$.  For $f \in \sF(\textbf{m},\textbf{n})$, we label our natural isomorphism $F^n\bR\sA(f) \implies \bR\sB(f)F^m$ as $\zeta_f$.  Then we must show the following equality of pasting diagrams:
    \[\begin{tikzcd}
    {\sA^m} && {\sA^k} && {\sA^m} & {\sA^n} & {\sA^k} \\
    &&& {=} \\
    {\sB^m} & {\sB^n} & {\sB^k} && {\sB^m} & {\sB^n} & {\sB^k}
    \arrow[""{name=0, anchor=center, inner sep=0}, "{\bR\sA(gf)}", from=1-1, to=1-3]
    \arrow["{F^m}"', from=1-1, to=3-1]
    \arrow["{F^k}", from=1-3, to=3-3]
    \arrow["{\bR\sA(f)}", from=1-5, to=1-6]
    \arrow[""{name=1, anchor=center, inner sep=0}, "{\bR\sA(gf)}", curve={height=-24pt}, from=1-5, to=1-7]
    \arrow[""{name=2, anchor=center, inner sep=0}, "{F^m}"', from=1-5, to=3-5]
    \arrow["{\bR\sA(g)}", from=1-6, to=1-7]
    \arrow[""{name=3, anchor=center, inner sep=0}, "{F^n}", from=1-6, to=3-6]
    \arrow[""{name=4, anchor=center, inner sep=0}, "{F^k}", from=1-7, to=3-7]
    \arrow["{\bR\sB(f)}"', from=3-1, to=3-2]
    \arrow[""{name=5, anchor=center, inner sep=0}, "{\bR\sB(gf)}", curve={height=-24pt}, from=3-1, to=3-3]
    \arrow["{\bR\sB(g)}"', from=3-2, to=3-3]
    \arrow["{\bR\sB(f)}"', from=3-5, to=3-6]
    \arrow["{\bR\sB(g)}"', from=3-6, to=3-7]
    \arrow["{\zeta_{gf}}"', shorten <=8pt, shorten >=10pt, Rightarrow, from=0, to=5]
    \arrow["{\zeta_f}"', shorten <=10pt, shorten >=10pt, Rightarrow, from=2, to=3]
    \arrow["\phi"', Rightarrow, from=1-6, to=1]
    \arrow["{\zeta_g}"', shorten <=14pt, shorten >=8pt, Rightarrow, from=3, to=4]
    \arrow["\psi", Rightarrow, from=3-2, to=5]
    \end{tikzcd}\]
Another rather horrendous pair of pasting diagrams, also in \cite[Section 3]{LRZZ} and not repeated here, reduced this equality to an equivalent equality of pasting diagrams  for $\bF$ as a pseudomorphism of 
 $\sP$-pseudoalgebras. 
  \end{proof}
  
\subsection{Details of the $2$-functor $\bU$}\label{Udetails}

\begin{proof} [Proof of \autoref{UProp1}] For a $\sP$-pseudoalgebra $\sA$, we defined $\bU \sA$ by giving $\sA$ the canonical product $\otimes$ induced by $\tha_2$ acting on the product of $e_2\in \sP(2)$  with $\sA^2$ and the unit object induced similarly by $e_o\in \sP(0)$.  The identity operation is induced by $e_1\in\sP(1)$.  In line with \autoref{norm1}, it is convenient to insist that $\sA$ be normal.  We must show that $\bU \sA = (\sA,\otimes,e)$ is symmetric monoidal.  The idea is that the key associativity diagram, \autoref{phikey}, is a simultaneous generalization of the defining properties of a symmetric monoidal category.  Since full details are spelled out  in \cite[Section 3]{LRZZ},\footnote{Our $\bU$ and $\bV$ are denoted $\bF$ and $\bG$ there.} we just say which case of that diagram we are applying to construct the coherence maps and which coherence properties of that diagram we are using to obtain the coherence diagrams of $\sA$ as a symmetric monoidal category. We use the notation $(k;j_1, \cdots, j_k)$ to describe that case of \autoref{phikey}. 

\begin{enumerate}[(i)]
    \item \textbf{(Unit)}. The left and right unitor isomorphisms $\la$ and $\rh$ come from the cases $(2;0,1)$ and $(2;1,0)$ of \autoref{phikey}, using the objects $e_2$, $e_0$, and $e_1$ in $\sP(2)$, 
$\sP(0)$, and $\sP(1)$. 
    \item \textbf{(Associativity)}. The associativity isomorphism $\al$ is the composite 
    $$(a\otimes b)\otimes c \iso a\otimes b\otimes c \iso a\otimes(b\otimes c)$$
of isomorphisms with the canonical $3$-fold product given by the cases $(2;2,1)$ and $(2;1,2)$ of \autoref{phikey}, using the objects $e_2$ and $e_1$ of $\sP(2)$ and $\sP(1)$. These two diagrams (the second one flipped) can be glued together to give a single associativity diagram.
\item \textbf{(Braiding)}.  The braiding $\be\colon a\otimes b \iso b\otimes a$ comes from the equivariance of $\tha_2$ and the isomorphism $\si \rtarr e_2$ in $\sP(2)$, where $\si = (12) \in \SI_2$:
$$a\otimes b = \tha_2(e_2; a, b) = \tha_2(\si; b, a)  \rtarr \tha_2(e_2; b,a) = b\otimes a$$
\end{enumerate}
We check the coherence diagrams in order.
\begin{enumerate}[(i)]
\item \textbf{(Pentagon)}. There is a canonical isomorphism between each of the neighbors in the pentagon diagram with the canonical $4$-fold product and therefore with each other.  These are obtained by composing isomorphisms given by \autoref{phikey}. For example
$$  (a\otimes (b\otimes c))\otimes d \iso (a\otimes b\otimes c)\otimes d \iso a\otimes b\otimes c\otimes d $$
and
$$ ((a\otimes b)\otimes c)\otimes d\iso (a\otimes b\otimes c)\otimes d  \iso a\otimes b\otimes c\otimes d$$
With these composite isomorphisms between neighbors, the analog of the pentagon diagram commutes by direct inspection, using that at each vertex these composites  are both canonically  isomorphic to  the canonical $4$-fold product.  Applications of the Operadic Composition equality of pasting diagrams then shows that  these composite isomorphisms between neighboring vertices are the same as the associativity isomorphisms that appear in the  pentagon diagram.
\item \textbf{(Triangle)}. Since $\mathbb{U}\sA$ satisfies the strict unit law, the triangle diagram is clearly commutative.
\item \textbf{(Hexagon)}. Using equivariance and associativity, all six terms in the hexagon diagram for $\bU\sA$ are of the form  $\tha_3(\ta; a, b, c)$ for some $\ta\in \SI_3$ and with $(a, b, c)$ occurring in that order. Writing out the hexagon in that equivalent way, we see two $3$-fold  composites $e_3 \rtarr e_3$ in the $\sP(3)$ variable.  These composites must both be the identity map. Therefore, applying $\tha_3$ to that diagram we obtain the identity isomorphism, showing that the hexagon axiom holds.
\item \textbf{(Symmetry)}. Since $\be$ is induced by $e_2 \rtarr \ph$, $\be^{-1}$ is induced by $\ph \to e_2$.  By the equivariance of $\tha_2$, we see that $\be^{-1} = \be$. 
\end{enumerate}
\end{proof}

\begin{proof}[Proof of \autoref{UProp2}] We have defined $\bU$ on objects ($0$-cells) and must extend the definition to $1$-cells and $2$-cells to show that $\bU$ is a $2$-functor.  The main point is to make precise what $\bU$ does on $1$-cells.  Thus let $(\bF,\ze)\colon \sA\rightarrow \sB$ be a pseudomorphism of $\sP$-pseudoalgebras.   Then $\ze_0$ gives an isomorphism $\epz\colon e_{\sB}\rtarr \bF(e_{\sA})$ and the restriction of $\ze_2$ to $e_2\times \sA^2$ gives an isomorphism  
$$\mu\colon \bF(a_1)\otimes \bF(a_2)\rtarr  \bF(a_1\otimes a_2).$$
We claim that these give a symmetric monoidal functor $\bU\bF\colon \bU\sA \rtarr \bU\sB$.  We must check associativity, unitality, and symmetry, as formulated in \autoref{SMMap}.

\begin{enumerate}[(i)]
    \item \textbf{(Associativity)}. Consider the Operadic Composition pasting diagrams in \autoref{PPsDefn}. The cases $(2;2,1)$ and $(2;1,2)$ equate the two vertical composites of the associativity diagram with
    $$\ze_3\colon \bF\sA\otimes \bF\sA \otimes \bF\sA \rtarr \bf(\sA\otimes \sA \otimes \sA.$$
 In view of how the associativity isomorphisms for $\bU\sA$ and $\bU\sB$ are constructed, an immediate comparison of diagrams gives the commutativity of the associativity diagram.
\item \textbf{(Unitality)}.  In view of \autoref{eunit}, this is a check from the identity relation for $\bF$ as a $\sP$-pseudomorphism that is given in \autoref{pseudomap}. 
\item \textbf{(Symmetry)}.   Because $\ze_2$ is natural with respect to $\PI$, it is equivariant, and it is also natural with respect to morphisms in $\sP(2)$.  In view of how $\be$ is defined, this implies commutativity of the symmetry diagram.
\end{enumerate} 
Modulo notation, the definition of $\bU$ on $2$-cells is a specialization of \autoref{2cell2}.  We restrict $\la$ there to the case $n=2$ of its pasting diagrams to see that $\la$ commutes with products as required by
\autoref{SMMap2};  it preserves unit objects by the case $n=0$.
\end{proof}

\subsection{Details of the $2$-functor $\bV$}\label{Vdetails}

\begin{proof}[Proof of \autoref{VProp1}]  For a symmetric monoidal category $\sA$, we must define a $\sP$-pseudoalgebra $\bV\sA$.  In \autoref{V}, we specified that the underlying category of $\bV\sA$ is $\sA$ and we defined the $\SI_j$-equivariant action functors $\tha\colon \sP(n)\times \sA^n \rtarr \sA$, by first choosing an iterated canonical $n$-fold tensor product to give $\tha_n$ on $e_n\times \sA^n$ and then requiring equivariance.   

The unit isomorphism $\ph_1$ required by \autoref{phione} is given by the identity; that is, 
${\ph_1}_{e_1, a} = \id_{a}$.  We must specify the isomorphism $\ph = \ph(k;j_1, \cdots, j_k)$ required by 
\autoref{phikey}. By the equivariance formulas required of $\ph$, if suffices to define $\ph$ on objects 
$(e_k; e_{j_1},\cdots, e_{j_k})\times \bar{a}$ where $\bar{a}$ is an $n$-tuple of objects of $\sA$.   Going around the top of \autoref{phikey}, we are first taking the canonical products of each of the $k$ blocks of $j_i$ objects of $\sA$ and then taking the product of those resulting products.  Going around the bottom, we are directly taking the product of these $j_+$ objects of $\sA$.   By associativity, the first is canonically isomorphic to the second, giving us $\ph$. 

We must check coherence, showing that the equalities of pasting diagrams specified in Operadic Composition and Operadic Identity and Composition all hold.  But that is what MacLane's coherence theorem tells us!  Each    
required equality is one of well-formed diagrams, and there is a unique isomorphism from source to target constructed from the coherence constraints $(\al, \be, \la, \rh)$ of the given symmetric monoidal structure on 
$\sA$. 
\end{proof}

\begin{proof}[Proof of \autoref{VProp2}]  We must show that $\sV$ extends to a $2$-functor. Again, the key point is to define $\bV$ on a symmetric monoidal functor $(\bF,\epz, \mu)\colon \sA \rtarr \sB$ of symmetric monoidal categories.   Of course, the underlying functor of $\bV\bF$ is $\bF$.  

We must define the natural transformations $\ze$ required in \autoref{Fzeta}.  By equivariance, it suffices to define them on objects $e_j\times \bar{a}$.  We start with 
$$\ze_0= \epz\colon e_{\sB} \rtarr \bF(e_{\sA}), \ \ {\ze_1}_{e_1, a} = \id_{\bF(a)}$$ 
and 
$${\ze_2}_{(e_2,a_1, a_2)} = \mu\colon \bF(a_1)\otimes \bF(a_2) \rtarr \bF(a_1\otimes a_2).$$
Given our inductive definition of the canonical product $a_1\otimes \cdots\otimes a_n$, we then define
$${\ze_j}_{(e_j,a_1, \cdots, a_j)} = \mu\colon \bF(a_1\otimes \cdots \otimes a_{j-1}) \otimes \bF(a_j) 
\rtarr \bF(a_1\otimes \cdots \otimes a_j)$$
The equality of pasting diagrams is again a consequence of MacLane's coherence theorem, this time applied to morphisms of symmetric monoidal categories.

Similarly, a natural transformation of symmetric monoidal categories $\xi\colon \bF \rtarr \bG$, as in \autoref{SMMap2},  gives a $\sP$-$2$-cell $\la\colon \bF \Rightarrow \bG$, as in \autoref{2cell1}.
\end{proof}

\section{Interpretation in terms of $2$-monads?}\label{Sec5}

\subsection{$2$-monads and their algebras} 

Behind the scenes, this is in part a paper all about $2$-monads, yet we have little of real substance to say.  We will just briefly indicate the context,  correct a small mistake in \cite{GMMO2},  and raise the  question of how our work relates to the $2$-monadic descriptions of $\PPs$ and $\FP$.

A $2$-monad in a ground $2$-category $\sK$ is a $\bf{Cat}$-enriched monad in $\sK$.   In our work,  $\sK$ is $\bf{Cat}$.  While $2$-monads give a very convenient common language for strictification functors, they can be viewed as peripheral contextualization here.   In particular, any use of colimits in $\sK$ is purely formal contextualization. 

As in \cite{GMMO1}, for a $2$-monad $\bT$ in $\sK$, there is a notion of a $\bT$-pseudoalgebra, of a $\bT$-pseudomorphism between $\bT$-pseudoalgebras, and of an algebra $2$-cell between $\bT$-pseudoalgebras.    There we only considered normal $\bT$-pseudoalgebras, with the same motivation from symmetric monoidal categories as here.  A $\bT$-pseudoalgebra comes with invertible $2$-cells 
$$\io \colon \Id \implies \tha\com \et \ \ \text{and} \ \ \varphi\colon \tha\com \bT\tha \implies \tha \com \mu$$ 
 that satisfy coherence conditions that are spelled out, for example, in \cite[2.4]{Power}.   With these definitions, we have $2$-categories 
$$ \bT\text{-}\bf{PsAlg}, \ \  \bT\text{-}\bf{AlgPs}, \ \ \text{and} \ \ \bT\text{-}\bf{AlgSt}$$
of $\bT$-pseudoalgebras and pseudomorphisms or of strict $\bT$-algebras and either pseudomorphisms or strict morphisms (the last being only a $1$-category, so that its algebra $2$-cells are identities). 

\subsection{The $2$-monad $\bP$}
Just as in the original $1$-categorical context, there is no problem constructing a $2$-monad $\bP$ in $\mathbf{Cat}_*$ and proving the following theorem. 

\begin{thm}\label{thmP} The $2$-category $\PPs$ is isomorphic to the $2$-category $\bP\text{-}\bf{PsAlg}$.
\end{thm}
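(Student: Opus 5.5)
We first construct the $2$-monad $\bP$ on $\mathbf{Cat}_*$ (or $\mathbf{Cat}$) in the usual way: for a category $\sA$,
$$\bP\sA = \coprod_{j\geq 0} \sP(j)\times_{\SI_j} \sA^j,$$
with the based variant obtained by the standard coequalizer that identifies the base object with the image of $\sP(0)$ via the degeneracies. The unit $\et\colon \sA\rtarr \bP\sA$ is induced by $e_1\in\sP(1)$, and the multiplication $\mu\colon \bP\bP\sA\rtarr \bP\sA$ is induced by the structure maps $\ga$ of $\sP$ together with shuffles. All of these are functors, natural in $\sA$ and compatible with natural transformations, so $\bP$ is a $\mathbf{Cat}$-enriched monad. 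The monad laws then follow exactly as in the $1$-categorical case from the associativity, unit, and equivariance axioms of the operad $\sP$.

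Next we define the comparison on $0$-cells. For a $\sP$-pseudoalgebra $(\sA,\tha,\ph_1,\ph)$, the $\SI_j$-equivariance of the $\tha_j$ (\autoref{PPsDefn}(i)) says precisely that they descend to a functor $\tha\colon \bP\sA\rtarr \sA$. The transformation $\ph_1$ of \autoref{phione} gives the unit $2$-cell $\io\colon \Id\Rightarrow \tha\com\et$. On the summand $\sP(k)\times_{\SI_k}\prod_i(\sP(j_i)\times_{\SI_{j_i}}\sA^{j_i})$ of $\bP\bP\sA$, the transformations $\ph(k;j_1,\dots,j_k)$ of \autoref{phikey} assemble into $\varphi\colon \tha\com\bP\tha\Rightarrow\tha\com\mu$. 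They descend to the quotients exactly because of the two equivariance formulas in \autoref{PPsDefn}(ii), which are designed for this purpose. Conversely, given a $\bP$-pseudoalgebra, we restrict $\tha$, $\io$ and $\varphi$ along the evident maps $\sP(j)\times\sA^j\rtarr\bP\sA$ and $\sP(k)\times\prod_i(\sP(j_i)\times\sA^{j_i})\rtarr\bP\bP\sA$. The restricted data are automatically equivariant, and the two constructions are visibly inverse, because a functor or natural transformation out of a coproduct of orbit categories is the same thing as a compatible family of equivariant ones.

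The main step is matching the coherence axioms. The associativity axiom for $\varphi$ in \cite[2.4]{Power} is a pasting equality of $2$-cells out of $\bP\bP\bP\sA$. Restricted to the summands indexed by $(k;j_r;l_{r,s})$, its two sides become precisely the two pasting diagrams of Operadic Composition, \autoref{PPsDefn}(iii). Here the map $\mu\com\bP\mu = \mu\com\mu_{\bP}$ is the composite labelled $\ga$ there, and the whiskerings $\bP\varphi$ and $\varphi_{\bP}$ correspond to the $\id\times\ph$ and $\ph$ faces. Similarly, the two unit axioms, involving $\io$ whiskered by $\et$ and $\bP\et$, restrict to the two equalities in \autoref{PPsDefn}(iv). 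Since $2$-cells out of a coproduct are determined by their restrictions, the monadic and operadic axioms are equivalent. I expect this bookkeeping to be the main obstacle. Three points need care: tracking the shuffle isomorphisms, the passage to $\SI$-orbits in the iterated $\bP\bP\bP\sA$, and (in the based setting) the effect of the basepoint identifications. These identifications involve $\sP(0)$ and so interact with the unit object $e$; I would handle them by checking that every relevant $2$-cell is compatible with the degeneracy maps, using the cases $(k;\dots,0,\dots)$ of $\ph$ as in \autoref{eunit}.

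Finally, on $1$-cells and $2$-cells the argument is the same. A $\bP$-pseudomorphism $(\bF,\bar\ze)$, with $\bar\ze\colon \vartheta\com\bP\bF\Rightarrow\bF\com\tha$, restricts to equivariant $\ze_j$ as in \autoref{Fzeta}. Its unit and multiplication axioms restrict to Preserve Identity and Preserve Composition of \autoref{pseudomap}. Algebra $2$-cells correspond to $\sP$-$2$-cells in the sense of \autoref{2cell1} by the same restriction. These bijections visibly preserve identities and both compositions, so they give an isomorphism of $2$-categories $\PPs\iso\bP\text{-}\bf{PsAlg}$.
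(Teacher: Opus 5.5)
Your argument, carried out with the unreduced monad $\bP\sA=\coprod_{j}\sP(j)\times_{\SI_j}\sA^j$ on unbased $\mathbf{Cat}$, is correct and follows the same approach as the paper. The paper only asserts that everything goes through as in the $1$-categorical case. Your descent of $\tha$ and $\ph$ to orbit categories via \autoref{PPsDefn}(i),(ii), and your matching of Power's associativity and unit axioms with \autoref{PPsDefn}(iii),(iv), supply exactly the bookkeeping the paper leaves implicit. One small imprecision: $\SI_k$ permutes the multi-indices $(j_1,\dots,j_k)$, so the summands of $\bP\bP\sA$ are indexed by orbits of multi-indices. The first equivariance formula in \autoref{PPsDefn}(ii) handles this, and you in effect already use it.

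The step that would fail is your proposed treatment of the based variant; drop it rather than try to repair it. Suppose $\bP\sA$ is formed by the coequalizer identifying $(\ga(c;e_1,\dots,e_0,\dots,e_1),\bar a)$ with $(c,(a_1,\dots,e,\dots,a_{j-1}))$. Then $\tha$ descends only if
$\tha_j(c;a_1,\dots,e,\dots,a_{j-1})=\tha_{j-1}(\ga(c;e_1,\dots,e_0,\dots,e_1);\bar a)$ holds on the nose. But the cases $(j;1,\dots,0,\dots,1)$ of $\ph$ supply only an isomorphism between the two sides; for $j=2$ and $c=e_2$ it is the unitor $a\otimes e\iso a$ of \autoref{eunit}. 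So the structure functor $\tha$ itself fails to exist, and no compatibility check on $2$-cells can fix that. Similarly, based $1$-cells and $2$-cells would force $\bF(e_{\sA})=e_{\sB}$ and $\ze_0=\id$. By contrast, \autoref{pseudomap} allows an arbitrary isomorphism $\ze_0$, namely the $\epz$ in the proof of \autoref{UProp2}.

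The reduced monad therefore captures only the sub-$2$-category with strict unit object. This excludes, e.g., $\bV\sA$ whenever $\sA$ has non-identity unitors. Normality ($\ph_1=\id$) does not help, since it concerns $e_1$, not $e=\tha_0$. Working over $\mathbf{Cat}$, where the unit object is produced by the $\sP(0)$ summand, matches \autoref{PPsDefn}, in which $\sA$ is just a category. The paper's aside that the reduced variant works for normal pseudoalgebras needs the same caveat.
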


On objects, $\bP\sA = \sqcup_{n\geq 0} \sP(n) \times_{\SI_n} \sA^n$, the component at $n=0$ giving the base object.  Formally, this is the categorical tensor product $\sP\otimes_{\SI} \sA^*$ of the contravariant functor 
$\sP\colon \SI \rtarr \mathbf{Cat}$ and the covariant functor $\sA^*\colon \SI \rtarr \mathbf{Cat}_*$ given by the cartesian powers $\sA^n$.   The unit functor $\et\colon \Id \rtarr \bP$ is induced by the object $e_1\in \sP(1)$, $\et(a) = (e_1, a)$.  The product functor $\mu\colon \bP\bP \rtarr \bP$ is induced by the structure maps $\ga$ of $\sP$, just as $1$-categorically, but here the associativity only holds up to a pseudoisomorphism.  

There are two variants, one general, in which the unit diagrams also only hold up to a pseudoisomorphism and one normal, in which the unit pseudoisomorphisms are identities.    With the latter, we have an alternative reduced monad $\bP$ in which the strict units allow the redefinition $\bP = \sP\otimes_{\LA} \sA^*$.  \autoref{thmP} still holds with the reduced variant.  On the topological $1$-category level, the reduced analog is far more useful (see \cite[section 4]{Rant1} for discussion), but that need not concern us here.

\subsection{The $2$-monads $\bE$ and $\bF$} 

We have analogs with $\sP$ replaced by $\sF$, but it is convenient to start with the subcategory $\sE$ of  effective maps in $\sF$.   In loose analogy with $\bP$, for a (covariant) functor 
$\sA \colon \SI \rtarr \mathbf{Cat}$, we define a  functor $\bE\sA\colon \SI \rtarr \mathbf{Cat}$ by
\begin{equation}\label{Edefn}
(\bE\sA)(n) = \sqcup_{n\geq 0} \sE(-,\bn) \otimes_{\SI} \sA = \sqcup_{n\geq 0}\sqcup_{m\geq 0}\sE(\bm,\bn)\times_{\SI_m} \sA(\bm)\\
\end{equation}
Here $\sE(-,\bn)$ is the contravariant functor $\sE\colon \SI \rtarr  \mathbf{Set}$ represented by $\bn$, but viewed as taking values in discrete categories.  The $n$th term in the coproduct is then a category.  (The $0$th component gives a base object.)  Post composition with permutations gives that $\bE\sA$ is a covariant functor $\SI \rtarr \mathbf{Cat}$.

As in \autoref{decompose}, $\sF$ is constructed from $\sE$ by adding in projections, as formalized by the definition of $\LA^{\perp}$ in \autoref{Fetal}. In contrast to $\sE$, each hom set $\sF(\bm,\bn)$ is based, hence we now work in based categories.  For a (covariant) functor  $\sA \colon \LA^{\perp} \rtarr \mathbf{Cat}_*$, we define a  functor 
$\bF\sA\colon \LA^{\perp} \rtarr \mathbf{Cat}_*$ by
\begin{equation}\label{Fdefn}
(\bF\sA)(n) = \vee_{n\geq 0} \sE(-,\bn) \otimes_{\LA^{\perp}} \sA = \vee_{n\geq 0}\vee_{m\geq 0}\sE(\bm,\bn)\times_{\SI_m} \sA(\bm)/(\sim),\\
\end{equation}
where $\sim$ comes from the projections in $\sF$.  

A key function of projections in $\sF$ is to facilitate comparison of categories with $\PI$-categories via the adjunction $(\bL,\bR)$ and its unit $\de$.    In view of the following lemma, projections are inessential to the constructions we are focusing on now. 

\begin{lem}\label{EinF}  The inclusion $\sE\subset \sF$ induces  an isomorphism $\bE\sA \iso \bF\sA$. 
\end{lem}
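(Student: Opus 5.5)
Since the formulas \autoref{Edefn} and \autoref{Fdefn} are loose, I first fix the intended meaning. I read $\bF\sA$ as the coend $\sF(-,\bn)\otimes_{\LA^{\perp}}\sA$: a generator is a pair $(f,a)$ with $f\in\sF(\bm,\bn)$ and $a\in\sA(\bm)$, and the relation is $(f\pi,a)\sim(f,\pi_*a)$ for $\pi$ in $\LA^{\perp}$. I read $\bE\sA$ as the corresponding coend over $\SI=\LA\cap\LA^{\perp}$, using only maps in $\sE$. The inclusion $\sE\subset\sF$ sends the class of $(\xi,a)$ to the class of $(\xi,a)$. This is well defined because a relation over $\SI$ is in particular a relation over $\LA^{\perp}$. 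The plan is to write down an inverse directly from the factorization in \autoref{decompose}.

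\emph{Definition of the inverse.} Given $f\in\sF(\bm,\bn)$, \autoref{decompose} gives $f=\xi\pi$. Here $\pi\colon\bm\rtarr\bk$ is the projection sending exactly $f^{-1}(0)$ to $0$, and we may take it order preserving on the remaining points, so $\pi\in\LA^{\perp}$ up to the usual identification. The map $\xi\in\sE(\bk,\bn)$ is then uniquely determined, since $\pi$ is surjective. Define $\bF\sA\rtarr\bE\sA$ by $[f,a]\mapsto[\xi,\pi_*a]$, and apply the same formula to morphisms of the categories $\sA(\bm)$.

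\emph{Well-definedness.} This is the main check. Given $\pi'\in\LA^{\perp}(\bm',\bm)$, I compare $f\pi'$ with $f$. The canonical factorization of $f\pi'$ has $(f\pi')^{-1}(0)=\pi'^{-1}(f^{-1}(0))\cup\pi'^{-1}(0)$, and its projection part differs from $\pi\pi'$ only by a permutation $\tau\in\SI_k$. Explicitly, $f\pi'=(\xi\tau^{-1})(\tau\pi\pi')$, and here $\tau\pi\pi'$ is the canonical projection for $f\pi'$. The map $\pi\pi'$ is in $\LA^{\perp}$, so it factors as a permutation composed with a canonical order-preserving projection. Therefore both sides map to classes that differ only by a $\SI_k$-relation in $\bE\sA$, and that relation is exactly the one the coend over $\SI$ imposes. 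The main obstacle I expect is making this bookkeeping airtight. I would handle it by writing every map in $\LA^{\perp}$ uniquely as (order-preserving projection)$\circ$(permutation). I would then check that such normal forms compose compatibly with the $\sE$-part, the key point being that a permutation pushed past an $\sE$-map stays a permutation.

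\emph{Mutual inverses.} One composite is the identity because for $\xi\in\sE$ the canonical projection is the identity. For the other composite, $[f,a]=[\xi\pi,a]\sim[\xi,\pi_*a]$ holds by the $\LA^{\perp}$-relation. Both maps respect the wedge/coproduct decomposition over $\bn$ and the basepoint, which is the image of the $m=0$ component, i.e.\ of the maps $f$ that are constant at $0$. Both maps are also natural in postcomposition by $\LA^{\perp}$, so they give inverse isomorphisms of functors. Finally, the $\sim$ in \autoref{Fdefn} collapses via projections, but by the above every element has a unique projection-free representative up to $\SI$. So no categorical (morphism-level) identifications arise beyond those already present in $\bE\sA$, and the resulting isomorphism of categories is $\bE\sA\iso\bF\sA$.
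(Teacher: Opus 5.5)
Your proof is correct, and its central step is the one the paper uses: factor $f=\xi\pi$ as in \autoref{decompose} and use $(\xi\pi,a)\sim(\xi,\pi_*a)$ to push the projection into $\sA$.

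The difference is in what is actually established. The paper lines up the two coequalizer diagrams and notes that the maps between the coproducts are inclusions. It then uses that relation only to conclude that the induced functor $\bE\sA\to\bF\sA$ is surjective. Injectivity is not argued, and it does not follow from the inclusions, since the coequalizer over $\LA^{\perp}$ imposes more relations than the one over $\SI$. Your retraction $[f,a]\mapsto[\xi,\pi_*a]$, together with the computation $f\pi'=(\xi\tau^{-1})(\tau\pi\pi')$, is exactly what shows that every $\LA^{\perp}$-relation reduces to a $\SI$-relation. So your route gives the complete two-sided argument at the cost of a little permutation bookkeeping, and you have already done that bookkeeping correctly.

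A few points to tighten:
\begin{itemize}
\item Coequalizers in $\mathbf{Cat}$ are not computed separately on objects and morphisms, so phrase the argument through universal properties. Your formula defines a functor on $\coprod_m\sF(\bm,\bn)\times\sA(\bm)$, and the $\tau$-computation shows it coequalizes the two parallel functors. Both composites are then identities because the quotient functors from the coproducts are epimorphisms. With this framing, your final sentence about morphism-level identifications is unnecessary.
\item The unique normal form you propose is in the wrong order. Uniquely, $\psi\in\LA^{\perp}$ is a permutation composed after the order-preserving projection with zero set $\psi^{-1}(0)$. In the other order the permutation is not determined on the collapsed points. You do not need this normal form anyway.
\item What you use is that precomposing an $\sE$-map with a permutation stays in $\sE$. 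It is not that permutations can be pushed past $\sE$-maps.
\item $\bE\sA$ is only a functor on $\SI$, so the naturality to check is under postcomposition by permutations. It holds because $\sigma f$ has the same zero set as $f$, hence the same canonical projection, with $\sigma\xi$ as its $\sE$-part.
\end{itemize}
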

\begin{proof} Writing out the coequalizers that define the categorical tensor products, we see a commutative diagram
\[
\xymatrix@1{  
\sqcup_{m,n} \sE(\bm,\bn)\times \SI_m \times \sA(m) \ar@<.5ex> [r]^-{\com \times \id}  \ar@<-.5ex>[r]_-{\id\times \Psi} \ar[d] &  \sqcup_n\sqcup_m \sE(\bm,\bn)\times\sA(m)   \ar[r]  \ar[d] & \bE\sA \ar[d]\\ 
\vee_{\ell, m, n} \sF(\bm,\bn)\times \LA^{\perp}(\ell, \bm) \times \sA(\ell) \ar@<.5ex> [r]^-{\com\times \id}  \ar@<-.5ex>[r]_-{\id\times \Psi}  & \vee_{m} \sF(\bm,\bn)\times \sA(m)  \ar[r]  &  \bF\sA \\ }
\]
in which the vertical arrows are inclusions.    Writing a map $\ps\colon \ell \rtarr \bm$ in $\LA^{\perp}$ as the composite of a projection $\pi\colon \ell \rtarr \bk$ and a map $\si\colon \bk\rtarr \bm$ in $\sE$ as in 
\autoref{decompose}, $\si$ is in $\LA^{\perp}\cap \sE = \SI$, so that $k=m$ and $\si\in \SI_m$.  For any object or morphism $a\in \sA(m)$,  $(\si\pi, a) \sim (\si, \pi a)$, which implies that the right vertical arrow is a surjection.
\end{proof}

\begin{prop}\label{MonF}  With unit $\et$ and product $\mu$ induced by the identities and composition in $\sF$, $\bF$ is a $2$-monad in the category of $\LA^{\perp}$-categories.
\end{prop}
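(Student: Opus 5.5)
The plan is to treat $\bF$ as the standard "free algebra over a category" monad attached to the based category $\sF$, relative to its subcategory $\LA^{\perp}$. First I check that $\bF$ is an endo-$2$-functor of $\LA^{\perp}$-categories, then define $\et$ and $\mu$, and finally verify the monad laws by reducing them to unitality and associativity of composition in $\sF$.

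\textbf{Step 1: $\bF$ is a $2$-functor.} For a functor $\sA\colon \LA^{\perp}\rtarr \mathbf{Cat}_*$, the functor $(\bF\sA)(n)$ is the categorical tensor product $\sF(-,\bn)\otimes_{\LA^{\perp}}\sA$. By \autoref{EinF} it may be computed using $\sE$. Postcomposition with maps of $\LA^{\perp}$ in the variable $\bn$ makes $\bF\sA$ a functor $\LA^{\perp}\rtarr \mathbf{Cat}_*$. Because the tensor product is functorial in $\sA$, maps and natural transformations of $\LA^{\perp}$-categories induce maps and $2$-cells of the $\bF\sA$'s, strictly compatibly with composition.

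\textbf{Step 2: the unit.} The component $\et\colon \sA(n)\rtarr (\bF\sA)(n)$ sends $a$ to the class of $(\id_{\bn},a)$. Naturality in $\LA^{\perp}$ is the relation $(\ps,a)\sim(\id,\ps a)$ for $\ps\in\LA^{\perp}$, which is built into the coequalizer.

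\textbf{Step 3: the product.} Here I would use the coend form
$$\bF\bF\sA(n)\iso \sF(-,\bn)\otimes_{\LA^{\perp}}\sF\otimes_{\LA^{\perp}}\sA.$$
The product $\mu$ sends $(g,(f,a))$ to $(g\com f,a)$. The main point is that $\mu$ is well defined. One must show that composition $\sF(\bm,\bn)\times\sF(\bk,\bm)\rtarr \sF(\bk,\bn)$ respects the $\LA^{\perp}$-balancing in the middle variable, which is immediate from associativity of composition in $\sF$. One must also show that composition is compatible with the projection relations $\sim$. This is again associativity, together with the fact that the factorization in \autoref{decompose} lets every relation be written as $(f\pi,a)\sim(f,\pi a)$.

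\textbf{Step 4: the monad laws.} These are strict identities, since everything is induced by strictly associative and unital composition in $\sF$; no pseudo-structure appears at this stage. On representatives:
\begin{itemize}
\item $\mu\com\et\bF$ sends $(f,a)$ to $(\id\com f,a)$;
\item $\mu\com\bF\et$ sends $(f,a)$ to $(f\com\id,a)$;
\item associativity $\mu\com\bF\mu=\mu\com\mu\bF$ reduces to $(h g)f=h(gf)$.
\end{itemize}
The $2$-naturality of $\et$ and $\mu$ in $\sA$ is clear from the formulas.

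I expect the main obstacle to be the well-definedness in Step 3. In particular, one must verify that the based structure is preserved, i.e.\ that classes involving the basepoint of $\sF(\bm,\bn)$ are carried to basepoints, and that the wedge decomposition commutes with composition. I would handle this by passing to $\sE$ via \autoref{EinF}. There, basepoints arise only from the $0$th components, and composition in $\sE$ visibly preserves effectiveness, so the check becomes routine.
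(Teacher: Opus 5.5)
Your proof is correct. Its core is the paper's: $\et$ comes from identities, $\mu$ from composition, and the monad laws from unitality and associativity of composition in $\sF$. The difference is in packaging, and it matters because of what the paper's proof actually emphasizes.

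In the paper's explicit wedge description \eqref{Fdefn}, writing out $\bF\bF\sA$ produces terms pairing $g\in\sF(\bp,\bq)$ with $f\in\sF(\bm,\bn)$ for $n\neq p$. The paper's proof is essentially the observation that $\mu$ only makes sense once such mismatched composites are declared to be the base map $\bm\rtarr\ast\rtarr\bq$; this is the point its Scholium says was overlooked earlier.

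Your several-object coend $\sF(-,\bn)\otimes_{\LA^{\perp}}\sF\otimes_{\LA^{\perp}}\sA$ balances the middle variable over $\LA^{\perp}$. So only composable pairs ever occur, and well-definedness of $\mu$ really is just associativity.

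What survives of the paper's issue in your setting is the basepoint compatibility you flag at the end. That check is immediate and needs no detour through $\sE$. The base map $\bm\rtarr\mathbf{0}\rtarr\bn$ factors through $\mathbf{0}$, so composing it on either side with any map gives the base map again; that is, $\sF$ is enriched in based sets. This is exactly the fact that makes the paper's zero-composite convention consistent.

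Routing the check through \autoref{EinF} would also work, but it is roundabout. Since $\bE$ is defined on $\SI$-categories, you would have to identify $\bF\bF\sA$ with $\bE\bE\sA$ by applying the lemma to $\bF\sA$ with its induced $\SI$-structure. Moreover, the paper's proof of that lemma only spells out surjectivity.

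What each approach buys: yours is the standard monad $i^*\mathrm{Lan}_i$ for $i\colon \LA^{\perp}\subset\sF$, with a clean and complete verification. The paper's version matches the explicit wedge formula it writes down, at the cost of needing the zero-composite convention.
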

\begin{proof}  We define $\et\colon \sA \rtarr \bF \sA$ by use of the identity maps of the$\sF(n,n)$. 
We define $\mu\colon \bF\bF \rtarr \bF$ to be induced by composition. However, writing out $\bF\bF\sA$ explicitly, we see that we need compositions $\sF(\bp, \bq)(\bm,\bn)$ to be defined even when $n\neq p$.
We define it to be the base map $\bm\rtarr \ast \rtarr \bq$ and check that this makes sense of things.
 \end{proof}
 
 \begin{sch} While $\vee$ rather than $\sqcup$ was implicit in the general based context of 
 \cite[Section 4]{GMMO2}, the need for the last sentence of the proof just given was overlooked.
 \end{sch}
 
The proof of the following observation is a direct comparison, but it is also the specialization to $\sF$ of a general result about categories of operators.  
 
 \begin{prop}\label{thmF} The $2$-category  $\sF\text{-}\bf{PsAlg}$ is isomorphic to the $2$-category \linebreak $\bF\text{-}\bf{PsAlg}$ of pseudoalgebras (in 
 $\PI$-$\bf{Cat}$) over the $2$-monad $\bF$ and similarly and isomorphically for  
 $\sE\text{-}\bf{PsAlg}$ and $\bE\text{-}\bf{PsAlg}$ (in $\LA$-$\bf{Cat}$).
 \end{prop}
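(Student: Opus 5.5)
The plan is a direct dictionary between the two kinds of structure, transported along the adjunction defining $\bF$. The key input is the universal property of the categorical tensor product in \autoref{Fdefn}: for a $\LA^{\perp}$-category $\sA$, a map of $\LA^{\perp}$-categories $\tha\colon \bF\sA \rtarr \sA$ is the same thing as a family of functors
$$\PS\colon \sF(\bm,\bn)\times \sA(\bm) \rtarr \sA(\bn)$$
that are natural in $\bm$ with respect to $\LA^{\perp}$ (this is where the relation $\sim$ is used) and natural in $\bn$. By \autoref{EinF}, we may equally use the restriction of $\PS$ to $\sE(\bm,\bn)$. This gives the bijection on underlying data. A pseudofunctor $\sF\rtarr \mathbf{Cat}$ that is strict on $\PI$ has $\PS$ strictly compatible with projections and permutations, and conversely.

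Next we match the coherence cells. Under the adjunction, a $2$-cell $\varphi\colon \tha\com \bF\tha \implies \tha\com\mu$ corresponds to natural isomorphisms
$$\PS(g,\PS(f,a)) \iso \PS(gf,a),$$
because $\mu$ is induced by composition in $\sF$, including the base-map convention of \autoref{MonF}. These are exactly the composition isomorphisms $\ph$ in \autoref{FPRdiag}. Likewise $\io\colon \Id \implies \tha\com\et$ corresponds to $\ph_1$. We then check that the pseudoalgebra axioms of \cite[2.4]{Power} translate term by term into the associativity and unit coherence of a pseudofunctor as in \autoref{PsFun}. This amounts to evaluating the $\bF^3\sA$ and $\bF^2\sA$ pasting diagrams at representative elements $(h,g,f,a)$. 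We must also make sure that strictness of the restriction to $\PI$ is exactly what is needed for $\varphi$ to be a cell in $\PI$-$\mathbf{Cat}$: the components of $\varphi$ must be $\PI$-natural, and they are identities on composites of $\PI$-maps.

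On $1$-cells, a $\bF$-pseudomorphism $(\bG,\ze)$ with $\ze\colon \bG\tha \implies \tha\com\bF\bG$ corresponds, again by adjunction, to isomorphisms $\bG\PS(f,a)\iso \PS(f,\bG a)$. These are precisely the data of a pseudotransformation that is strict on $\PI$, and the two coherence axioms correspond to compatibility with $\ph$ and $\ph_1$. On $2$-cells, algebra $2$-cells correspond directly to the $\sF$-$2$-cells of \autoref{2cell2}. Since every correspondence is a bijection that is compatible with composition, the result is an isomorphism, not merely an equivalence. The $\sE$ and $\bE$ case is the same argument with $\LA$ in place of $\LA^{\perp}$ and with no basepoint quotient.

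I expect the main obstacle to be the bookkeeping in $\bF\bF\sA$. Composable-looking pairs with mismatched middle objects are sent to base maps, and we must verify that this convention corresponds to nothing extra on the pseudofunctor side, namely that such terms land on basepoints and impose only trivial coherence. I would first handle this using reducedness, $\sA_0=\ast$, together with \autoref{EinF}, to reduce everything to the effective part, where the correspondence is clean.
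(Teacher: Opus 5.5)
Your route is the ``direct comparison'' the paper has in mind, but the first step of your dictionary is false. The failure is also not the bookkeeping you anticipate. A $1$-cell $\theta\colon \bF\sA\to\sA$ is a functor out of the coend in \eqref{Fdefn}: the $\times_{\Sigma_m}$ there, equivalently the coequalizer of $\circ\times\mathrm{id}$ and $\mathrm{id}\times\Psi$ in the proof of \autoref{EinF}. So the associated $\Psi$ must satisfy $\Psi(f\lambda,a)=\Psi(f,\lambda a)$ for \emph{every} $f\in\sF$ and $\lambda\in\Lambda^{\perp}$, not only for $f\in\Pi$. Likewise, $\varphi$ must be well defined on the coend $\bF\bF\sA$ and be a $2$-cell of $\Lambda^{\perp}$-categories. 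This forces $\varphi_{g\lambda,f}=\varphi_{g,\lambda f}$, $\varphi_{g,f\lambda}=\varphi_{g,f}\lambda$ and $\varphi_{\lambda g,f}=\lambda\varphi_{g,f}$. Combined with the unit axioms, which give $\varphi_{\mathrm{id},f}=\mathrm{id}=\varphi_{f,\mathrm{id}}$ in the normal case, this makes the composition constraints $\varphi_{f,\lambda}$ and $\varphi_{\lambda,f}$ identities for all $f\in\sF$. Strictness on $\Pi$ gives only $\varphi_{\pi,\pi'}=\mathrm{id}$ for $\pi,\pi'\in\Pi$. Your sentence ``a pseudofunctor that is strict on $\Pi$ has $\Psi$ strictly compatible with projections and permutations'' conflates these two conditions.

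The paper's \autoref{example} is exactly a counterexample. Since $\phi_n\sigma=\phi_n$, the monadic side forces $\Psi(\phi_n,\sigma a)=\Psi(\phi_n,a)$, so $\theta(\phi_n,-)$ factors through $\sA^n/\Sigma_n$. No choice of representatives helps, because $\Sigma_n$ fixes $\phi_n$. But for $\sB\in\FPR$ the constraint $\varphi_{\phi_n,\sigma}$ is precisely the symmetry isomorphism of \autoref{keyc} and \eqref{FPRdiag2}. For $\sB=\bR\bV(\mathbf{Set},\times)$ it is not an identity, since $A\times B\neq B\times A$. Thus your correspondence can at best identify $\bF$-pseudoalgebras with the proper sub-$2$-category of $\FP$ on which these constraints are trivial, in particular on which all symmetry isomorphisms are identities. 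It misses $\bR\bV\sA$ whenever the symmetry of $\sA$ is not the identity. There is a smaller problem too: with $\bF$ a monad on $\Lambda^{\perp}$-categories, your ``conversely'' does not give strictness on the injections in $\Pi$. To obtain an isomorphism you would have to restrict $\FP$ to that subclass, or use a monad that does not quotient by permutations in the source variable; such a monad then no longer encodes strictness on $\Pi$. The paper's one-line ``direct comparison'' does not address this point either, so the gap cannot be closed just by filling in diagrams.
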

 
 \begin{warn}\label{warn} In sharp contrast to the focus of this paper,  the $\bF$-algebras $\bF\bR\sA$ 
 are not very special.  
 \end{warn}
 
 The warning gives one reason that $2$-monads are peripheral to this paper.  Nevertheless, it would be interesting to explore the implicit relation between Theorems \ref{thmP} and \ref{thmF} established by our \autoref{thm2}.

\bibliographystyle{plain}
\bibliography{references}

\end{document}